\newtheorem{theorem}{Theorem}[section]
\newtheorem{lemma}[theorem]{Lemma}
\newtheorem{cor}[theorem]{Corollary}
\newtheorem{conj}[theorem]{Conjecture}
\newtheorem{prob}[theorem]{Problem}
\theoremstyle{definition}
\newtheorem{definition}[theorem]{Definition}
\newtheorem{remark}[theorem]{Remark}
\renewcommand{\subset}{\subseteq}
\renewcommand{\epsilon}{\varepsilon}
\newcommand{\abs}[1]{\left|#1\right|}                   
\newcommand{\absf}[1]{|#1|}                             
\newcommand{\vnorm}[1]{\left\|#1\right\|}    
\newcommand{\vnormf}[1]{\|#1\|}                         
\newcommand{\vnormt}[1]{\left\|#1\right\|}    
\newcommand{\Z}{\mathbb{Z}}                             
\newcommand{\E}{\mathbb{E}}
\renewcommand{\d}{\mathrm{d}}
\renewcommand{\P}{\mathbb{P}}
\newcommand{\R}{\mathbb{R}}
\newcommand{\embolden}[1]{\textbf {#1}}
\newcommand{\redA}{\Sigma}
\newcommand{\redb}{\partial^{*}}
\newcommand{\sdimn}{n}
\newcommand{\adimn}{n+1}
\begin{document}

\title{Hyperstable Sets with Voting and Algorithmic Hardness Applications}

\author{Steven Heilman}
\address{Department of Mathematics, University of Southern California, Los Angeles, CA 90089-2532}
\email{stevenmheilman@gmail.com}
\date{\today}
\thanks{S. H. is Supported by NSF Grant CCF 1911216}

\begin{abstract}
The noise stability of a Euclidean set $A$ with correlation $\rho$ is the probability that $(X,Y)\in A\times A$, where $X,Y$ are standard Gaussian random vectors with correlation $\rho\in(0,1)$.  It is well-known that a Euclidean set of fixed Gaussian volume that maximizes noise stability must be a half space.

For a partition of Euclidean space into $m>2$ parts each of Gaussian measure $1/m$, it is still unknown what sets maximize the sum of their noise stabilities.  In this work, we classify partitions maximizing noise stability that are also critical points for the derivative of noise stability with respect to $\rho$.  We call a partition satisfying these conditions hyperstable.  Uner the assumption that a maximizing partition is hyperstable, we prove:
\begin{itemize}
\item a (conditional) version of the Plurality is Stablest Conjecture for $3$ or $4$ candidates.
\item a (conditional) sharp Unique Games Hardness result for MAX-m-CUT for $m=3$ or $4$
\item a (conditional) version of the Propeller Conjecture of Khot and Naor for $4$ sets.
\end{itemize}
We also show that a symmetric set that is hyperstable must be star-shaped.

For partitions of Euclidean space into $m>2$ parts of fixed (but perhaps unequal) Gaussian measure, the hyperstable property can only be satisfied when all of the parts have Gaussian measure $1/m$.  So, as our main contribution, we have identified a possible strategy for proving the full Plurality is Stablest Conjecture and the full sharp hardness for MAX-m-CUT: to prove both statements, it suffices to show that sets maximizing noise stability are hyperstable.  This last point is crucial since any proof of the Plurality is Stablest Conjecture must use a property that is special to partitions of sets into equal measures, since the conjecture is false in the unequal measure case.
\end{abstract}


\maketitle
\setcounter{tocdepth}{1}
\tableofcontents
%
%
%
%
%

\section{Introduction}\label{secintro}

The noise stability of a measurable set $\Omega\subset\R^{\adimn}$ with correlation $\rho$ is the probability that $(X,Y)\in A\times A$, where $X,Y$ are standard Gaussian random vectors with correlation $\rho\in(-1,1)$:
\begin{equation}\label{zero0}
\P((X,Y)\in \Omega\times\Omega),
\end{equation}
where $X=(X_{1},\ldots,X_{\adimn}),Y=(Y_{1},\ldots,Y_{\adimn})$, and $\E X_{i}X_{j}=\E Y_{i}Y_{j}=1_{\{i=j\}}$ for all $1\leq i,j\leq \adimn$ and $\E X_{i}Y_{j}=\rho\cdot 1_{\{i=j\}}$ for all $1\leq i,j\leq \adimn$.

The noise stability of a set could also be called its Gaussian heat content.  Inequalities for noise stability have been investigated in many places, including \cite{borell85,ledoux94,mossel12,eldan13,heilman20d,heilman21}.  Besides their intrinsic interest, noise stability inequalities have applications to social choice theory \cite{khot07,mossel10,isaksson11}, the Unique Games Conjecture in theoretical computer science \cite{khot07,mossel10,khot15}, to semidefinite programming algorithms such as MAX-CUT \cite{khot07,isaksson11}, to learning theory \cite{feldman12}, information theory \cite{de17,de18,heilman22}, etc.  For some surveys on these and related topics, see  \cite{odonnell14b,khot10b,heilman20b}.

A basic question, answered in \cite{borell85} is: which measurable Euclidean sets of fixed Gaussian volume maximizes noise stability?  More generally, our primary question of interest will be: which partitions of Euclidean space of fixed Gaussian volumes maximize the sum of their noise stabilities \cite{isaksson11}?  This particular question was asked in \cite{isaksson11}: it is the continuous version of the Plurality is Stablest Conjecture from social choice theory \cite{khot07,mossel10,isaksson11}.  Some ``robust'' statements of this kind have been investigated.  It was shown in \cite{heilman20d,heilman21} that half spaces are the only local maximizers of the noise stability subject to a Gaussian volume constraint.  That is, for any $\Omega\subset\R^{\adimn}$, if for any collection of measurable sets $\{\Omega^{(s)}\}_{s\in(-1,1)}$ with $\Omega^{(0)}=\Omega$ and with constant Gaussian volume $\P(X\in \Omega^{(s)})=\P(X\in\Omega)$, $\forall$ $s\in(-1,1)$ such that
$$\frac{\d}{\d s}\Big|_{s=0}\,\P((X,Y)\in \Omega^{(s)}\times\Omega^{(s)})=0,$$
if we have
$$\frac{\d^{2}}{\d s^{2}}\Big|_{s=0}\,\P((X,Y)\in \Omega^{(s)}\times\Omega^{(s)})\leq0,$$
then $\Omega$ must be a half space (i.e. the set of all points lying on one side of a hyperplane) (up to measure zero changes to $\Omega$).  Borell's inequality follows as a Corollary \cite{borell85}: half spaces maximize noise stability among all Euclidean sets of fixed Gaussian volume.  In fact, \cite{heilman21} also showed that sets close to half spaces nearly maximize noise stability, proving some cases of a Conjecture of Eldan \cite{eldan13}, following similar ``robust'' inequalities of \cite{mossel12,eldan13}.

For partitions of Euclidean space with $m>2$ fixed Gaussian volumes, an analogous characterization of local maxima should hold \cite{isaksson11}, though obtaining such a characterization requires the Gaussian volumes to be equal \cite{heilman14}.  That is, we let $\Omega_{1},\ldots,\Omega_{m}\subset\R^{\adimn}$ be a partition of Euclidean space into $m$ sets such that $\P(X\in \Omega_{i})=1/m$ for all $1\leq i\leq m$.  If for any collection of measurable sets $\{\Omega_{i}^{(s)}\}_{s\in(-1,1),1\leq i\leq m}$ with $\cup_{i=1}^{m}\Omega_{i}^{(s)}=\R^{\adimn}$ for all $s\in(-1,1)$, $\Omega_{i}^{(0)}=\Omega_{i}$ for all $1\leq i\leq m$ and $\P(X\in \Omega_{i}^{(s)})=1/m$ for all $1\leq i\leq m$ such that
$$\frac{\d}{\d s}|_{s=0}\sum_{i=1}^{m}\P((X,Y)\in \Omega_{i}^{(s)}\times\Omega_{i}^{(s)})=0,$$
we have
$$\frac{\d^{2}}{\d s^{2}}|_{s=0}\sum_{i=1}^{m}\P((X,Y)\in \Omega_{i}^{(s)}\times\Omega_{i}^{(s)})\leq0,$$
then $\Omega_{1},\ldots,\Omega_{m}$ should be cones over the regular simplex centered at the origin (up to measure zero changes to the sets).  This conjecture is known as the Standard Simplex Conjecture \cite{isaksson11}, or the continuous version of the Plurality is Stablest Conjecture \cite{khot07}.  When $m=2$, this conjecture has been proven: it is Borell's inequality \cite{borell85} for a set of Gaussian measure $1/2$.  The only known result for $m>2$ is the case $m=3$ and $\rho>0$ small, proven in \cite{heilman20d}.

Since the Gaussian measures we consider have a correlation parameter $\rho\in(-1,1)$, below we write $\P_{\rho}$ in place of $\P$, to denote the dependence of the measure on $\rho$, when appropriate.

\begin{definition}[\embolden{Hyperstable Set}]
We say a measurable set $\Omega\subset\R^{\adimn}$ is \textbf{hyperstable} for correlation parameter $\rho\in(0,1)$ if the following holds.  For any collection of measurable sets $\{\Omega^{(s)}\}_{s\in(-1,1)}$ with constant Gaussian volume $\P(X\in \Omega^{(s)})=\P(X\in\Omega)$ $\forall$ $s\in(-1,1)$, such that $\Omega^{(0)}=\Omega$ and such that
$$\frac{\d}{\d s}|_{s=0}\,\P((X,Y)\in \Omega^{(s)}\times\Omega^{(s)})=0,$$
we have
$$\frac{\d^{2}}{\d s^{2}}|_{s=0}\,\P((X,Y)\in \Omega^{(s)}\times\Omega^{(s)})\leq0,\quad\mathrm{and}\quad
\frac{\d^{2}}{\d s\,\d\rho }|_{s=\rho=0}\,\P_{\rho}((X,Y)\in \Omega^{(s)}\times\Omega^{(s)})=0.$$
In the case $\rho\in(-1,0)$, the inequality is reversed.
\end{definition}

\begin{definition}[\embolden{Hyperstable Symmetric Set}]
We say a measurable set $\Omega\subset\R^{\adimn}$ is a \textbf{hyperstable symmetric set} for correlation parameter $\rho\in(0,1)$ if the following holds.  We have $\Omega=-\Omega$ and for any collection of measurable sets $\{\Omega^{(s)}\}_{s\in(-1,1)}$ with constant Gaussian volume $\P(X\in \Omega^{(s)})=\P(X\in\Omega)$ $\forall$ $s\in(-1,1)$ such that $\Omega^{(0)}=\Omega$ and such that
$$\frac{\d}{\d s}|_{s=0}\,\P((X,Y)\in \Omega^{(s)}\times\Omega^{(s)})=0,$$
we have
$$\frac{\d^{2}}{\d s^{2}}|_{s=0}\,\P((X,Y)\in \Omega^{(s)}\times\Omega^{(s)})\leq0,\quad\mathrm{and}\quad\frac{\d^{2}}{\d s\,\d\rho }|_{s=\rho=0}\,\P_{\rho}((X,Y)\in \Omega^{(s)}\times\Omega^{(s)})=0.$$
In the case $\rho\in(-1,0)$, the inequality is reversed.
\end{definition}

\begin{definition}[\embolden{Hyperstable Partition}]\label{hyppart}
We say that a partition of Euclidean space into measurable sets $\Omega_{1},\ldots,\Omega_{m}\subset\R^{\adimn}$ is \textbf{hyperstable} for correlation parameter $\rho\in(0,1)$ if the following holds.  For any collection of measurable sets $\{\Omega_{i}^{(s)}\}_{s\in(-1,1),1\leq i\leq m}$ with $\cup_{i=1}^{m}\Omega_{i}^{(s)}=\R^{\adimn}$ for all $s\in(-1,1)$, $\Omega_{i}^{(0)}=\Omega_{i}$ for all $1\leq i\leq m$ and $\P(X\in \Omega_{i}^{(s)})=1/m$ for all $1\leq i\leq m$ such that
$$\frac{\d}{\d s}|_{s=0}\sum_{i=1}^{m}\P((X,Y)\in \Omega_{i}^{(s)}\times\Omega_{i}^{(s)})=0,$$
we have
$$\frac{\d^{2}}{\d s^{2}}|_{s=0}\sum_{i=1}^{m}\P((X,Y)\in \Omega_{i}^{(s)}\times\Omega_{i}^{(s)})\leq0,\quad\mathrm{and}\quad\frac{\d^{2}}{\d s\,\d\rho }|_{s=\rho=0}\sum_{i=1}^{m}\P_{\rho}((X,Y)\in \Omega_{i}^{(s)}\times\Omega_{i}^{(s)})=0.$$
In the case $\rho\in(-1,0)$, the inequality is reversed.
\end{definition}
\begin{conj}[\embolden{Plurality is Stablest, Informal Version}, {\cite{khot07}, \cite[Conjecture 1.9]{isaksson11}}]\label{pisinf}
Suppose we run an election with a large number $n$ of voters and $m\geq3$ candidates.  We make the following assumptions about voter behavior and about the election method.
\begin{itemize}
\item Voters cast their votes randomly, independently, with equal probability of voting for each candidate.
\item Each voter has a small influence on the outcome of the election.  (That is, all influences from Definition \ref{infdef} are small for the voting method.)
\item Each candidate has an equal chance of winning the election.
\end{itemize}
Under these assumptions, the plurality function is the voting method that best preserves the outcome of the election, when votes have been corrupted independently each with probability less than $1/2$.
\end{conj}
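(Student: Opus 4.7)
The plan is to establish the conditional version of this conjecture asserted in the abstract for $m=3$ or $m=4$, under the hypothesis that a Gaussian maximizer is hyperstable. First, I would formalize the informal statement. A voting method is a function $f\colon\{1,\ldots,m\}^n\to\{1,\ldots,m\}$, and its noise stability at correlation $\rho\in(0,1)$ is $\sum_{i=1}^m\P(f(v)=f(w)=i)$, where $v$ is uniform on $\{1,\ldots,m\}^n$ and $w$ is obtained from $v$ by independently rerandomizing each coordinate with probability $1-\rho$. The three informal bullet points translate into: $\P(f(v)=i)=1/m$ for each $i$; the coordinate influences $\sum_i\P(f(v)\neq f(v^{(j)}))$ are uniformly small in $j$, where $v^{(j)}$ rerandomizes only the $j$th coordinate; and the third bullet is implied by the first.

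Second, I would invoke the multidimensional invariance principle of Mossel--O'Donnell--Oleszkiewicz to pass to the Gaussian setting: for any low-influence $f$ satisfying these normalizations, the stability of $f$ equals, up to error tending to $0$ with the influences, the Gaussian stability $\sum_{i=1}^m\P_\rho((X,Y)\in\Omega_i\times\Omega_i)$ of some measurable partition $\{\Omega_i\}_{i=1}^m$ of $\R^n$ with $\P(X\in\Omega_i)=1/m$. The discrete conjecture thereby reduces to the Standard Simplex Conjecture for $m$ equal-measure pieces, whose conjectured extremizer consists of the $m$ cones over a regular simplex centered at the origin.

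Third, assuming the Gaussian maximizer is hyperstable in the sense of Definition \ref{hyppart}, I would exploit the vanishing mixed partial condition $\frac{\d^2}{\d s\,\d\rho}|_{s=\rho=0}\sum_{i=1}^m\P_\rho((X,Y)\in\Omega_i^{(s)}\times\Omega_i^{(s)})=0$ for all admissible volume-preserving first-order critical deformations. A Hermite/Ornstein--Uhlenbeck expansion should reduce this cross term to a surface integral of a quadratic form in the normal component of the variational vector field on $\cup_i\partial\Omega_i$, forcing a strong rigidity of the interfaces. Combined with the second-order inequality and the symmetry imposed by equal volumes, this should pin the maximizer down to the simplicial-cone partition; for $m=3$ the small-$\rho$ result of \cite{heilman20d} already supplies part of the classification, while for $m=4$ the hyperstability rigidity must be married to a finite combinatorial case analysis of the possible flat-boundary partitions.

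The main obstacle is verifying the hyperstability hypothesis itself, which is why the result stays conditional: the mixed partial vanishing is strictly stronger than first- and second-order optimality, and proving it for the actual noise-stability maximizer appears to require ideas beyond the second-variation machinery of \cite{heilman20d,heilman21}. Nonetheless, reducing the full conjecture to this single extra rigidity is the structural gain, and the reason a proof of hyperstability for maximizers would simultaneously yield the Plurality is Stablest Conjecture, the sharp MAX-$m$-CUT hardness, and the $m=4$ Propeller Conjecture advertised in the abstract.
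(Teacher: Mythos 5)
The statement you are addressing is labeled a conjecture in the paper and remains open; the paper only supplies a \emph{conditional} proof (Theorem \ref{main1}, re-proved as Theorem \ref{main16}) under the hyperstability hypothesis, which is indeed what you set out to do, so the comparison is between your proposed conditional argument and the paper's.

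Your high-level outline — formalize the discrete statement, pass to the Gaussian picture via the MOO invariance principle, then exploit the extra mixed-partial condition in hyperstability to rigidify the maximizer to simplicial cones — matches the paper's architecture. However, the crucial third step is left too vague to count as a proof, and the specific mechanism you gesture at (``a Hermite/OU expansion reduces the cross term to a quadratic form, forcing rigidity of the interfaces'') is not how the argument actually closes. The paper proceeds in three concrete stages. First, one applies the translation vector fields together with the almost-eigenfunction identity of Lemma \ref{treig2n} to prove dimension reduction (Theorem \ref{mainthm1n}): the optimal partition is a cylinder over a partition of $\R^{m-1}$. Second — and this is the step your sketch is missing — one uses the \emph{dilation-like} vector field $X(x)=x_{\adimn}\cdot x$ (which requires the cylinder structure from step one to even make sense as a volume-preserving variation), together with the almost-eigenfunction equation for dilations (Lemma \ref{mceign}); the hyperstability hypothesis is exactly what kills the $\frac{\d}{\d\rho}T_{\rho}$ remainder term in that equation, turning ``almost eigenfunction'' into ``eigenfunction'' with a strictly favorable eigenvalue sign, which forces $\langle x,N_{ij}(x)\rangle\equiv 0$, i.e.\ the partition is a cone (Lemma \ref{lemma50}). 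Third, a purely geometric case analysis of dilation-invariant partitions in $\R^{m-1}$ for $m\leq 4$ — using reflection symmetries across flat interfaces and the first-variation constancy condition — pins down the regular simplicial cones (Lemma \ref{lemma51}); this is where the restriction $m\leq 4$ comes from (see Remark \ref{m5rk}). Your remark that the small-$\rho$ result of \cite{heilman20d} ``already supplies part of the classification'' for $m=3$ does not reflect the paper's argument, which does not invoke that result in the classification step. You should also tighten the formalization: the condition $\P(f(v)=i)=1/m$ is the third bullet (balanced outcomes), not a consequence of the uniform-votes assumption; a constant function has uniform input but entirely unbalanced output.
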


\subsection{Our Contribution}

Our main contribution is a conditional proof of the continuous version of the Plurality is Stablest Conjecture \ref{pisinf} for $m=3$ or $m=4$ candidates.

\begin{theorem}[\embolden{Conditional Proof of Standard Simplex Conjecture}]\label{main1}
Let $3\leq m\leq4$.  Let $\rho\in(0,1)$.  Let $\Omega_{1},\ldots,\Omega_{m}\subset\R^{\adimn}$ be a partition of Euclidean space with $\P(X\in \Omega_{i})=1/m$ for all $1\leq i\leq m$ such that
$$\sum_{i=1}^{m}\P_{\rho}((X,Y)\in \Omega_{i}\times\Omega_{i})\geq \sum_{i=1}^{m}\P_{\rho}((X,Y)\in \Theta_{i}\times\Theta_{i})$$
for all partitions $\Theta_{1},\ldots,\Theta_{m}\subset\R^{\adimn}$ with $\P(X\in \Theta_{i})=1/m$ for all $1\leq i\leq m$.  Assume additionally that any $\Omega_{1},\ldots,\Omega_{m}$ are hyperstable, as in Definition \ref{hyppart}.  Then $\Omega_{1},\ldots,\Omega_{m}$ are the cones over a regular simplex centered at the origin.
\end{theorem}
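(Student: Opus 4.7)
The idea is to read the hyperstability identity as an extra critical-point condition at the maximizer, use it to force the boundary of $(\Omega_{i})$ to be a union of flat pieces, and then classify such ``simplex-type'' partitions with equal Gaussian measures in the low ambient dimension guaranteed by $m\leq 4$.

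\textbf{Step 1 (meaning of hyperstability).} Expanding noise stability in Hermite projections,
\[
\sum_{i=1}^{m}\P_{\rho}((X,Y)\in\Omega_{i}^{(s)}\times\Omega_{i}^{(s)})=\sum_{k=0}^{\infty}\rho^{k}\sum_{i=1}^{m}\|P_{k}1_{\Omega_{i}^{(s)}}\|_{2}^{2},
\]
where $P_{k}$ is the orthogonal projection onto the degree-$k$ Hermite subspace of $L^{2}(\gamma)$. Since $\partial_{\rho}|_{\rho=0}$ picks out the $k=1$ term and $\|P_{1}1_{\Omega}\|_{2}^{2}=|v|^{2}$ for $v:=\int_{\Omega}x\,\d\gamma(x)$, Definition \ref{hyppart} becomes
\[
\frac{\d}{\d s}|_{s=0}\sum_{i=1}^{m}|v_{i}^{(s)}|^{2}=0
\]
for every volume-preserving variation whose first-order stability vanishes, where $v_{i}^{(s)}:=\int_{\Omega_{i}^{(s)}}x\,\d\gamma(x)$. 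Because $(\Omega_{i})_{i}$ is a maximizer, every volume-preserving variation is automatically first-order-stability-preserving, so $G:=\sum_{i=1}^{m}|v_{i}|^{2}$ is stationary under every volume-preserving deformation of the partition.

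\textbf{Step 2 (boundaries are hyperplanes).} Parametrizing a variation by a normal velocity $\phi_{ij}$ on $\partial\Omega_{i}\cap\partial\Omega_{j}$ (positive if the boundary moves from $\Omega_{i}$ into $\Omega_{j}$), a direct computation yields
\[
\frac{\d}{\d s}|_{s=0}G=2\sum_{i<j}\int_{\partial\Omega_{i}\cap\partial\Omega_{j}}\langle v_{i}-v_{j},x\rangle\,\phi_{ij}(x)\,\d\sigma(x),
\]
where $\d\sigma$ denotes the relevant Gaussian-weighted surface measure. Setting this to zero for every $(\phi_{ij})$ subject to the $m$ volume constraints $\sum_{j\neq i}\int\phi_{ij}\,\d\sigma=0$, and applying Lagrange multipliers, produces constants $\mu_{1},\ldots,\mu_{m}\in\R$ such that
\[
\langle v_{i},x\rangle-\mu_{i}=\langle v_{j},x\rangle-\mu_{j}\quad\text{on }\partial\Omega_{i}\cap\partial\Omega_{j}.
\]
Writing $L_{i}(x):=\langle v_{i},x\rangle-\mu_{i}$, the partition property now forces, up to null sets, $\Omega_{i}=\{x\in\R^{\adimn}:L_{i}(x)\geq L_{j}(x)\ \forall\,j\}$. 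Thus any hyperstable maximizer is a simplex-type partition defined by $m$ affine-linear functionals.

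\textbf{Step 3 and main obstacle.} Since $\sum_{i=1}^{m}v_{i}=\int_{\R^{\adimn}}x\,\d\gamma=0$, the vectors $v_{1},\ldots,v_{m}$ span a subspace $W\subset\R^{\adimn}$ of dimension at most $m-1\leq 3$. The inequalities $L_{i}\geq L_{j}$ involve only $P_{W}x$, so $\Omega_{i}$ has the product form $\Omega_{i}=\Omega_{i}'\times W^{\perp}$ with $\Omega_{i}'\subset W$, and the equal-measure, self-consistent ($v_{i}=\int_{\Omega_{i}}x\,\d\gamma$) classification reduces to one for a standard Gaussian on $W$. For $m=3$ ($\dim W\leq 2$) an elementary plane-geometric fixed-point analysis shows the only configuration is the regular ``Y''; for $m=4$ ($\dim W\leq 3$) an analogous three-dimensional analysis rules out propeller-type and other skewed tetrahedral configurations, leaving only the regular tetrahedron fan. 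In each case $(\Omega_{i})$ is the cones over a regular simplex centered at the origin. The principal obstacle is precisely the $m=4$ low-dimensional classification: ruling out non-regular self-consistent configurations by the equal-measure constraint in dimension up to $3$ is the delicate step, and this is also why the argument stops at $m=4$, since for $m\geq 5$ the subspace $W$ may have dimension $\geq 4$ and the analogous classification problem touches still-open cases of the propeller conjecture.
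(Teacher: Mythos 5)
Your reading of the hyperstability condition---taking $|_{s=\rho=0}$ at face value and evaluating at $\rho=0$---reduces the mixed derivative to stationarity of the propeller functional $\sum_{i}|v_{i}|^{2}$, and that is a defensible parse of the definition. But it is not how the paper applies the condition. Differentiating the first-variation formula in $\rho$ gives, for a variation with normal speed $f_{ij}$ on $\Sigma_{ij}$,
\[
\frac{\d^{2}}{\d s\,\d\rho}\Big|_{s=0}\sum_{i=1}^{m}\P_{\rho}\big((X,Y)\in\Omega_{i}^{(s)}\times\Omega_{i}^{(s)}\big)
=2\sum_{1\leq i<j\leq m}\int_{\Sigma_{ij}}f_{ij}(x)\,\frac{\d}{\d\rho}T_{\rho}(1_{\Omega_{i}}-1_{\Omega_{j}})(x)\,\gamma_{\adimn}(x)\,\d x,
\]
and the paper uses the vanishing of this quantity at the \emph{given} $\rho\in(0,1)$ (applied to the dilation variation $X(x)=x_{\adimn}\cdot x$) to kill the error term in the dilation almost-eigenfunction identity \eqref{six1z} (Lemma \ref{mceigp}; bilinear version \eqref{six1zn}, Lemma \ref{mceign}). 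The route of the paper is therefore: dimension reduction to $\R^{m-1}$ via translations as eigenfunctions (Theorem \ref{mainthm1n}); dilation invariance via dilations as eigenfunctions plus hyperstability (Lemma \ref{lemma50}); and a reflection-group classification of dilation-invariant partitions using the first-variation constancy (Lemma \ref{lemma51}). Your argument attempts to reach flat boundaries directly from the propeller functional, bypassing both the dimension-reduction and dilation-invariance steps, which is a genuinely different (and more direct) strategy, but it relies on an interpretation of hyperstability that the paper does not actually use.

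Even under your reading there are concrete gaps. In Step 2, the Lagrange-multiplier conclusion $\langle v_{i}-v_{j},x\rangle=\mu_{i}-\mu_{j}$ on $\Sigma_{ij}$ is vacuous whenever $v_{i}=v_{j}$, a degeneracy you do not rule out; and upgrading the local statement $\Sigma_{ij}\subset\{L_{i}=L_{j}\}$ to the global claim $\Omega_{i}=\{x: L_{i}(x)\geq L_{j}(x)\ \forall j\}$ requires an orientation and connectedness argument that you do not supply. Step 3 is the crux and is only asserted: classifying equal-measure, self-consistent affine partitions of $\R^{m-1}$ is exactly where Lemma \ref{lemma51} does the real work---using the first-variation constancy $T_{\rho}(1_{\Omega_{i}'}-1_{\Omega_{j}'})=c_{ij}$ on interfaces, forcing $c_{ij}=0$ and hence reflection symmetry of adjacent sectors---and the analogous Propeller Conjecture classification for $m=4$ previously required computer assistance. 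Calling this an ``elementary fixed-point analysis'' without details leaves the hardest step unproved. Your remark about why the argument stops at $m=4$ does match Remark \ref{m5rk}.
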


We also prove a bilinear version of Theorem \ref{main1} (see Theorem \ref{main16} below), implying an analogue of Theorem \ref{main1} for negative $\rho$.  The case of negative $\rho$ then implies a (conditional) hardness result for MAX-$m$-CUT for $m=3$ and $m=4$ \cite{isaksson11}.

\begin{theorem}[\embolden{Conditional Proof of MAX-$m$-CUT Hardness}]\label{main3}
Let $3\leq m\leq4$.  Under the assumption of Theorem \ref{main1}, the Frieze-Jerrum semidefinite program for MAX-$m$-CUT \cite{frieze95} achieves the best possible quality of approximation in polynomial time, assuming the Unique Games Conjecture.
\end{theorem}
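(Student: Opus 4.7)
The plan is to derive Theorem \ref{main3} from the bilinear, negative-correlation analogue of Theorem \ref{main1} (namely Theorem \ref{main16}) combined with the black-box reduction from Unique Games to MAX-$m$-CUT established by Khot--Kindler--Mossel--O'Donnell \cite{khot07} and refined by Isaksson--Mossel \cite{isaksson11}. No new Gaussian extremal result is needed beyond what Theorem \ref{main16} already provides; the work is in plugging that result into the existing hardness machinery.

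First, I would recall the Frieze--Jerrum rounding \cite{frieze95}: given the SDP optimum, one rounds by projecting onto a standard Gaussian in $\R^{m-1}$ and assigning each vertex to the corresponding cone $S_i$ over the regular simplex, for $i=1,\dots,m$. On an edge whose SDP vectors have inner product $\rho$, the cut probability is precisely $1-\sum_{i=1}^{m}\P_{\rho}((X,Y)\in S_i\times S_i)$. The Frieze--Jerrum worst-case approximation ratio is the infimum, over the relevant range $\rho\in(-1/(m-1),0)$, of this cut probability divided by the matching SDP upper bound, and this ratio is attained exactly at the simplex partition.

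Second, I would invoke Theorem \ref{main16} under the hyperstable hypothesis: for $3\le m\le 4$ and $\rho$ in the relevant negative range, the simplex cones minimize $\sum_{i=1}^{m}\P_{\rho}((X,Y)\in \Omega_i\times\Omega_i)$ over all equal-measure partitions of $\R^{\adimn}$, or equivalently maximize the Gaussian cut probability. Via the Mossel--O'Donnell--Oleszkiewicz invariance principle, this yields the negative-$\rho$ Gaussian Plurality is Stablest statement on low-influence Boolean partitions, which is exactly the soundness guarantee required by the dictator-vs.-quasi-random test from \cite{khot07,isaksson11}. The Isaksson--Mossel reduction from Unique Games then converts the low-influence soundness bound into UG-hardness for MAX-$m$-CUT whose threshold matches the Frieze--Jerrum rounding ratio.

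The main obstacle is ensuring that the hyperstable hypothesis, which is stated for positive-$\rho$ maximizers in Theorem \ref{main1}, is genuinely the hypothesis needed by the negative-$\rho$ statement Theorem \ref{main16}; this is precisely what the bilinear reformulation is designed to do, and once the transfer is spelled out, the deduction of Theorem \ref{main3} is a citation of the dictatorship-test and UG-reduction framework, with no further Gaussian-geometric input required. A secondary technical point is tracking that the admissible range $\rho\in(-1/(m-1),0)$ used by Frieze--Jerrum falls inside the range in which Theorem \ref{main16} applies, which is automatic from the hypothesis's formulation on all $\rho\in(-1,0)$.
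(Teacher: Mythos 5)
Your proposal is correct and matches the paper's (implicit) argument: the paper does not spell out a separate proof of Theorem \ref{main3}, but simply notes right after stating it that Theorem \ref{main16} gives the negative-$\rho$ analogue of Theorem \ref{main1}, and that this "implies a (conditional) hardness result for MAX-$m$-CUT" via the invariance-principle and Unique Games reduction machinery of Isaksson--Mossel \cite{isaksson11} and Khot--Kindler--Mossel--O'Donnell \cite{khot07}. Your proposal fills in exactly this chain -- Frieze--Jerrum rounding, identification of the worst-case ratio with Gaussian noise stability of the simplex partition over the relevant negative-$\rho$ range, transfer of the hyperstable hypothesis through the bilinear reformulation to Theorem \ref{main16}, and the dictatorship-test/UG-reduction soundness step -- which is precisely the citation the paper relies on.
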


Since the Proof of Theorem \ref{main1} identifies all local maxima of the noise stability (assuming hyperstability of the optimal sets), we can conclude a conditional version of the related Propeller Conjecture of Khot and Naor for four sets.  (The case of $3$ sets follows from an elementary argument, and the case of $4$ sets was proven using computer-assistance in \cite{heilman14}, though our result below avoids any computer assistance.)

\begin{theorem}[\embolden{Conditional Proof of Propeller Conjecture}]\label{main2}
Let $3\leq m\leq 4$.  Let $\Omega_{1},\ldots,\Omega_{m}$ $\subset\R^{\adimn}$ be a partition of Euclidean space with $\P(X\in \Omega_{i})=1/m$ for all $1\leq i\leq m$ such that
$$\sum_{i=1}^{m}\P((X,Y)\in \Omega_{i}\times\Omega_{i})\geq \sum_{i=1}^{m}\P((X,Y)\in \Theta_{i}\times\Theta_{i})$$
for all partitions $\Theta_{1},\ldots,\Theta_{m}\subset\R^{\adimn}$ with $\P(X\in \Theta_{i})=1/m$ for all $1\leq i\leq m$.  Assume additionally that $\Omega_{1},\ldots,\Omega_{m}$ are hyperstable, as in Definition \ref{hyppart}.  Then, for any partition $A_{1},\ldots,A_{m}$ of $\R^{\adimn}$ into measurable sets,
$$\sum_{i=1}^{m}\vnorm{\int_{A_{i}}x\gamma_{\adimn}(x)\,\d x}_{\ell_{2}^{\adimn}}^{2}\leq \frac{9}{8\pi}.$$
And equality holds uniquely (up to rotations and measure zero changes) $A_{1}',A_{2}',A_{3}'\subset\R^{2}$ are three 120 degree sectors centered at the origin, $A_{i}=A_{i}'\times\R^{n-1}$ for all $1\leq i\leq 3$, and $A_{i}=\emptyset$ for all $4\leq i\leq m$.
\end{theorem}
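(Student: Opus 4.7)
The plan is to identify the propeller functional
$$V(A_1,\ldots,A_m) := \sum_{i=1}^m \vnorm{\int_{A_i} x\,\gamma_{\adimn}(x)\,\d x}_{\ell_{2}^{\adimn}}^{2}$$
with the first-order Taylor coefficient in $\rho$ of the noise stability, and then invoke the conditional Theorem \ref{main1} to classify the extremizers of $V$ over equal-measure partitions. By the Hermite/Mehler expansion, for any partition $B_1,\ldots,B_m$ of $\R^{\adimn}$ with $v_i := \P(X \in B_i)$,
$$\sum_{i=1}^m \P_\rho((X,Y)\in B_i \times B_i) \;=\; \sum_{i=1}^m v_i^2 \;+\; \rho\,V(B_1,\ldots,B_m) \;+\; O(\rho^2), \qquad \rho\to 0.$$
Restricting to equal-measure partitions, the $\sum_i v_i^2$ term is the constant $1/m$, so the hypothesis of the theorem together with Theorem \ref{main1} implies (after dividing by $\rho$ and sending $\rho\to 0^+$) that cones over a regular simplex maximize $V$ among equal-measure partitions into $m$ parts. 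An explicit computation in polar coordinates gives $V(\text{3-sector propeller}) = 9/(8\pi)$, and a similar computation shows that the value attained by the partition of $\R^3$ into four tetrahedral cones is strictly less than $9/(8\pi)$.

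Next I would reduce the general (possibly unequal-measure) case to a Voronoi-type problem. Any global maximizer $A_1,\ldots,A_m$ of $V$ must be a Voronoi partition with respect to its own centroids $c_i := \int_{A_i} x\,\gamma_{\adimn}(x)\,\d x$, i.e.\ $A_i = \{x : \langle c_i - c_j, x\rangle > 0 \text{ for all } j \neq i\}$ up to a null set. This follows from the identity $V(A) = \sum_i \int_{A_i}\langle c_i, x\rangle \,\gamma_{\adimn}(x)\,\d x$: fixing the $c_i$'s and pointwise maximizing the integrand produces a partition $\tilde A_i$ whose centroids $\tilde c_i$ satisfy $\sum_i \langle c_i, \tilde c_i\rangle \geq V(A)$, and Cauchy--Schwarz then forces $V(\tilde A)\geq V(A)$ with equality only for Voronoi partitions. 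Any such partition is a cone through the origin, so the effective ambient dimension is at most $m-1$, and one also has $\sum_i c_i = \int x\,\gamma_{\adimn}\,\d x = 0$.

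Finally I would handle the remaining Voronoi optimization by cases. For $m=3$ this is the Khot--Naor elementary argument: a Voronoi partition of $\R^2$ into three sectors is parameterized by two angles, and $V$ is an explicit trigonometric function, maximized uniquely (up to rotation) at the $120^\circ$ propeller, yielding the stated equality case. For $m=4$, if some $A_i$ is empty the problem reduces to $m=3$; otherwise the Voronoi extremizer is determined by four vectors in $\R^3$ summing to zero. I would then argue that the self-consistency $c_i = \int_{A_i} x\,\gamma_{\adimn}\,\d x$ together with the equal-measure classification from Theorem \ref{main1} forces $v_i = 1/4$ for all $i$, so the configuration is tetrahedral, giving $V < 9/(8\pi)$ strictly.

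I expect the main obstacle to be this last step, namely ruling out four-cell Voronoi critical points with unequal measures. Theorem \ref{main1} is stated only on the equal-measure stratum, so passing to the unconstrained problem requires an additional ingredient. A natural route is to analyze the finite-dimensional optimization of $V$ over the four-vector configuration in $\R^3$ directly: the stationarity equations (from varying each Voronoi hyperplane), together with $\sum_i c_i = 0$ and the self-consistency $c_i = \int_{A_i} x\,\gamma_{\adimn}\,\d x$, form an overdetermined system whose only four-cell solutions should be the tetrahedral ones. This is the step that must replace the computer-assisted enumeration used in \cite{heilman14}.
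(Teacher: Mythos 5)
Your overall strategy (Taylor expansion of noise stability in $\rho$, reduction to conical Voronoi partitions, case analysis by number of nonempty cells) is the right framework, and you correctly identify that the true obstacle is ruling out unequal-measure conical critical points with all four cells nonempty. But the route you sketch for closing that gap does not work and misses the mechanism the paper actually relies on.

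The problem is that you invoke ``the equal-measure classification from Theorem \ref{main1}'' to force $v_i = 1/4$, which is circular: Theorem \ref{main1} lives entirely on the equal-measure stratum and says nothing about a critical point of the propeller functional with unequal measures. What the paper does instead is essentially encoded in the proof of Lemma \ref{lemma51} (and its source, the argument around \eqref{five1}--\eqref{five2}): for a dilation-invariant partition, the first-variation condition $T_{\rho}(1_{\Omega_i'} - 1_{\Omega_j'}) \equiv c_{ij}'$ on the cone interface, evaluated at the origin and at infinity along a boundary ray, forces $\gamma(\Omega_i') - \gamma(\Omega_j') = 0$ whenever $\Omega_i$ and $\Omega_j$ share a nontrivial interface. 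So \emph{equal measures are derived, not assumed}: any hyperstable dilation-invariant conical critical point either has all cells of equal measure (and the classification then yields the regular simplex cone) or has an empty cell, in which case one reduces to a smaller $m$. This is precisely the ingredient that lets one pass from the equal-measure stratum to the unconstrained propeller optimization and is what replaces the computer-assisted enumeration in \cite{heilman14}; it is absent from your argument. Relatedly, your step ``argue that the self-consistency $c_i = \int_{A_i}x\gamma\,\d x$ together with Theorem \ref{main1} forces $v_i = 1/4$'' needs to be replaced by this first-variation-on-a-ray computation.

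Two further caveats worth addressing. First, your limiting argument ``divide by $\rho$ and send $\rho\to 0^+$'' implicitly uses the hyperstability hypothesis for \emph{all} small $\rho$, not just a single $\rho$; you should say this, since Theorem \ref{main1} is quantified over a fixed $\rho$. Second, even the paper's route leaves a step that you should be careful about: the classification in Lemma \ref{lemma51} applies to hyperstable critical points of the (bilinear) noise-stability functional, whereas the propeller maximizer is a priori only a critical point of the $\rho$-linear coefficient; identifying the propeller maximizer as (the $\rho\to 0$ limit of) a hyperstable noise-stability minimizer is the bridge that makes the conditional statement of Theorem \ref{main2} meaningful, and it deserves to be made explicit rather than taken for granted.
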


The restriction that $m\leq 4$ in the above statements arises since we are presently unable to classify dilation-invariant hyperstable partitions when $m\geq5$.  See Remark \ref{m5rk} for more on this difficulty.

Lastly, we can show that hyperstable symmetric cylinders are star-shaped.  This result can be compared with an analogous Gaussian surface area result of \cite{heilman22b}.  The latter (unconditional) result says that cylinders that are Gaussian minimal surfaces must be convex (or have a convex complement).

\begin{theorem}[\embolden{Symmetric Hyperstable Cylinders are Star-Shaped}]\label{symthm}
Suppose $\Omega\subset\R^{\adimn}$ is a symmetric hyperstable set and $\exists$ $\Omega_{0}\subset\R^{\sdimn}$ such that
$$\Omega=\Omega_{0}\times\R.$$
Then $\Omega$ or $\Omega^{c}$ is star-shaped with respect to the origin.
\end{theorem}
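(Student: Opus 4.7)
My plan is to exploit the cylindrical structure together with the symmetry $\Omega = -\Omega$ by testing hyperstability against a family of variations indexed by Hermite degree in the flat direction. First I would verify that the mixed-partial condition in the definition of a hyperstable symmetric set is automatic here: since $1_\Omega$ is even and the linear Hermite polynomials are odd, the barycenters $b_j(s) := \int_{\Omega^{(s)}} x_j \gamma_{\adimn}(x) \, dx$ all vanish at $s=0$, so $\partial_s\partial_\rho \P_\rho((X,Y)\in\Omega^{(s)}\times\Omega^{(s)})|_{s=\rho=0} = \partial_s \sum_j b_j(s)^2|_{s=0} = 0$ identically. Thus the symmetric hyperstable hypothesis is equivalent to $\Omega$ being a local maximum of $\P_\rho((X,Y)\in\Omega\times\Omega)$ under the Gaussian-volume constraint.

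Second, since the Ornstein--Uhlenbeck operator factorizes on tensor products and fixes constants, the noise stability of $\Omega = \Omega_0 \times \R$ equals the noise stability of $\Omega_0$ computed in $\R^{\sdimn}$, and the first-order critical condition reduces to $U_\rho^{(\sdimn)} 1_{\Omega_0}$ being constant on $\partial\Omega_0$, where $U_\rho^{(\sdimn)}$ is the Ornstein--Uhlenbeck operator on $\R^{\sdimn}$. The main step is to test the second variation against variations of $\Omega$ with normal velocity $V(y,z) = h(y) H_k(z)$, where $H_k$ is the normalized $k$-th Hermite polynomial, $k\ge 1$, and $h$ is an arbitrary smooth function on $\partial\Omega_0$. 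Because $\int_\R H_k(z)\gamma_1(z)\,dz = 0$, both the volume and the stability first-order conditions hold for any such $h$. Using $U_\rho^{(1)} H_k = \rho^k H_k$ and the product form of the Mehler kernel, the nonlocal quadratic piece of the second variation factorizes as $\rho^k$ times a quadratic form in $h$ over $\partial\Omega_0$, whereas the local boundary and curvature terms carry the factor $\int_\R H_k(z)^2 \gamma_1(z)\,dz = 1$ and thus persist as $k\to\infty$.

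After subtracting the Lagrange-multiplier term coming from the volume constraint and using that $U_\rho^{(\sdimn)} 1_{\Omega_0}$ is constant on $\partial\Omega_0$, the inequality that survives in the $k\to\infty$ limit should reduce, up to a positive constant, to
\[
\int_{\partial\Omega_0} h(y)^2\, \langle y, \nu_0(y)\rangle\, \gamma_{\sdimn}(y)\, d\sigma(y) \le 0 \qquad \text{for all smooth } h,
\]
where $\nu_0$ denotes the outward unit normal on $\partial\Omega_0$. The mean-curvature contribution from $\partial\Omega$ reduces to that of $\partial\Omega_0$ by the cylindrical flatness in the $z$-direction, and can be cancelled against the $k=0$ (i.e.\ $z$-independent) second-variation inequality already granted by hyperstability, leaving only the radial term above. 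Localizing $h$ to arbitrarily small neighborhoods on $\partial\Omega_0$ then forces $\langle y, \nu_0(y)\rangle$ to have constant sign, which is exactly the statement that $\Omega_0$ (if the sign is nonnegative) or $\Omega_0^c$ (if nonpositive) is star-shaped with respect to the origin; hence $\Omega$ or $\Omega^c$ is star-shaped. The main obstacle is the explicit second-variation computation: identifying the curvature, normal-derivative, and Lagrange-multiplier terms with correct coefficients, and verifying that in the limit $k\to\infty$ all non-radial terms either vanish via the $\rho^k$ factor or cancel against the $k=0$ critical-point relation.
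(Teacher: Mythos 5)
Your proposal departs from the paper's proof at every significant step, and I do not think it closes. Two concrete problems.

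First, the claim that the mixed-partial hyperstability condition is automatic for a symmetric set, so that hyperstability is equivalent to being a local maximizer, misreads the role of that condition. Your computation shows that $\frac{\d^2}{\d s\,\d\rho}\big|_{\rho=0}$ vanishes for symmetric sets, but the inequality the paper actually needs (see the final step of \eqref{seven2}) is
\[
\int_{\Sigma}\frac{\d}{\d\rho}T_{\rho}(1_{\Omega})(x)\,\langle x,N(x)\rangle\,\gamma_{\adimn}(x)\,\d x\geq0
\]
at the fixed correlation $\rho\in(0,1)$, i.e.\ the mixed partial evaluated at $s=0$ and at the ambient $\rho$, not at $\rho=0$. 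Discarding it as automatic leaves no mechanism to bring in the dilation direction at all.

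Second, and more importantly, your $k\to\infty$ limit does not produce the radial integrand $\langle y,\nu_0(y)\rangle$. With the volume-preserving second-variation formula (Lemma \ref{lemma7p}) and a test velocity $f(y,z)=h(y)H_k(z)$, the Mehler kernel factorizes, giving
\[
\tfrac12\,\delta^2 = \rho^{k}\,Q_{\Omega_0}[h]\;-\;\int_{\Sigma_0}\vnormf{\overline{\nabla}T_\rho 1_{\Omega_0}(y)}\,h(y)^2\,\gamma_\sdimn(y)\,\d y,
\]
so the surviving term as $k\to\infty$ is $-\int_{\Sigma_0}\vnormf{\overline{\nabla}T_\rho 1_{\Omega_0}}h^2\gamma_\sdimn$, which is automatically $\leq 0$ and carries no geometric information; no $\langle y,\nu_0\rangle$ appears unless you deliberately feed in the dilation velocity. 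The Lagrange-multiplier and curvature terms you invoke are already folded into $\vnormf{\overline{\nabla}T_\rho 1_{\Omega_0}}$ once one uses the volume-preserving extension (Lemma \ref{lemma27}), so there is nothing left to ``cancel against the $k=0$ relation.''

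The paper's actual mechanism is different: it tests the single velocity $f(x)=(x_{\adimn}^2-1)\langle x,N(x)\rangle$ (degree-two Hermite in the flat direction times the dilation), then invokes the almost-eigenfunction identity for the dilation (Lemma \ref{mceig}, with the derivative-in-$\rho$ term controlled precisely by the hyperstability mixed-partial assumption at $\rho$) to show the second variation with this $f$ is $\geq 0$. It then replaces $\langle x,N(x)\rangle$ by $|\langle x,N(x)\rangle|$ and uses strict positivity of the Mehler kernel $G$ to get a strictly larger nonlocal term whenever $\langle\cdot,N\rangle$ changes sign, yielding a variation with positive second variation, a contradiction. None of these ingredients (dilation velocity, almost-eigenfunction identity, absolute-value comparison via kernel positivity) appears in your plan, and without them the inequality you want does not fall out of a high-Hermite-degree limit.
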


\subsection{Related Work: Level Sets of Caloric Functions and the ``Matzoh ball soup'' Problem}

Let $\Omega\subset\R^{\adimn}$ and let $P_{t}1_{\Omega}$ denote the heat flow applied to $1_{\Omega}$ at time $t>0$, i.e.
$$P_{t}f(x)\colonequals\int_{\R^{\adimn}}f(x+y\sqrt{2t})\gamma_{\adimn}(y)\,\d y,\qquad\forall\, f\colon\R^{\adimn}\to[0,1],$$
so that
$$\frac{\partial}{\partial t}P_{t}f(x)=\sum_{i=1}^{\adimn}\frac{\partial^{2}}{\partial x_{i}^{2}}P_{t}f(x),\qquad\forall\,t>0,\,\forall\,x\in\R^{\adimn}.$$
Suppose $\Omega$ is a superlevel set of $P_{t}1_{\Omega}$, for all $t>0$ (that is, $\Omega=\{x\in\R^{\adimn}\colon P_{t}1_{\Omega}(x)\geq c\}$ for some $c=c(t)\in\R$).  It was then shown in \cite{mag02} that $\Omega$ must be a ball (assuming a priori that $\Omega$ is compact, and it satisfies regularity assumptions: it satisfies an exterior sphere condition and an interior cone condition.)  More generally, if $\Omega$  is bounded, if $\Omega$ satisfies an exterior sphere condition, and if $D\subset\Omega$ with $\overline{D}\subset\Omega$, if $\partial D$ satisfies an interior cone condition, and if $D$ is a superlevel set of $P_{t}1_{D}$ for all $t>0$, then $\Omega$ must be a ball.  This result of \cite{mag02} solved the so-called ``Matzoh Ball Soup Problem.''

There is an analogy with this problem and our hyperstable assumption for sets optimizing noise stability.  If $\Omega$ maximizes noise stability $\int_{\R^{\adimn}}1_{\Omega}(x)T_{\rho}1_{\Omega}(x)\gamma_{\adimn}(x)\,\d x$ for fixed $\rho>0$, then $\Omega$ is a superlevel set of $T_{\rho}1_{\rho}$, where $T_{\rho}$ is the Ornstein-Uhlenbeck operator from \eqref{oudef}.  (See e.g. Lemma \ref{firstvarmaxns}.)  If $\Omega$ is also a critical point of $\frac{\d}{\d\eta}\Big|_{\eta=\rho}\int_{\R^{\adimn}}1_{\Omega}(x)T_{\eta}1_{\Omega}(x)\gamma_{\adimn}(x)\,\d x$, then $\partial\Omega$ is also a level set of $\frac{\d}{\d\eta}\Big|_{\eta=\rho}T_{\eta}1_{\Omega}(x)$, so that $\partial\Omega$ is a level set of $T_{\eta}1_{\Omega}$ as $\eta$ converges to $\rho$.  This condition is analogous to that of \cite{mag02}, which requires a stronger assumption on the level set for \textit{all} times $t$, rather than just an infinitesimal neighborhood of times.

\subsection{Related Work: Self-Shrinkers for Mean Curvature Flow}

Let $\Sigma\subset\R^{\adimn}$ be a compact $C^{\infty}$ smooth manifold.  Assume that $\Sigma$ is a self-shrinker, so that
\begin{equation}\label{sseq}
H(x)=\langle x,N(x)\rangle,\qquad\forall\,x\in\Sigma.
\end{equation}
Here $N(x)$ is the exterior pointing unit normal vector to $\Sigma$ at $x$ and $H(x)\colonequals\mathrm{div}(N)(x)$ is the mean curvature of $\Sigma$ at $x$.  Self-shrinkers are of interest in the theory of mean curvature flows, since they are stationary under the flow.  In their investigation of mean curvature flow \cite{colding12a}, Colding and Minicozzi studied a maximal version of the Gaussian surface area of an $\sdimn$-dimensional $C^{\infty}$ hypersurface $\Sigma$ in $\R^{\adimn}$.  They called this quantity
\begin{equation}\label{cment}
\sup_{a>0,b\in\R^{\adimn}}\int_{\Sigma}a^{-\frac{\sdimn}{2}}\gamma_{\sdimn}((x-b)a^{-1/2})dx
\end{equation}
the ``entropy'' of $\Sigma$.   It turns out the self-shrinkers are critical points of the entropy functional \cite{colding12a}.  More specifically, taking the second derivative of \eqref{cment} when $a=0$ and $b=1$, one obtains $-\int_{\Sigma}f(x)Lf(x)\gamma_{\adimn}(x)\,\d x$, where the operator $L$ is defined on smooth compactly supported functions $f$ on $\Sigma$:
\begin{equation}\label{elleq}
L\colonequals\Delta-\langle x,\nabla\rangle+\vnorm{A}^{2}+1.
\end{equation}
Here $\vnorm{A}^{2}$ is the sum of the squares of all second order partial derivatives of the unit normal vector $N$ at $x$.  (That is, $A$ is the second fundamental form.)  Also, $\Delta$ is the Laplacian on $\Sigma$ (the sum of repeated second derivatives), and $\nabla$ is the gradient on $\Sigma$.

For self-shrinkers \eqref{sseq}, it was a key insight of \cite{colding12a} that $L$ has two distinct eigenspaces with positive eigenvalues.  The first eigenspace has eigenvalue one.  For any $v\in\R^{\adimn}$, the function $x\mapsto \langle v,N(x)\rangle$ satisfies
\begin{equation}\label{lvneq}
L\langle v,N\rangle=\langle v,N\rangle.
\end{equation}
The second eigenspace has eigenvalue two.  The function $f(x)\colonequals \langle x,N(x)\rangle$ satisfies
\begin{equation}\label{lheq}
Lf=2f.
\end{equation}
We note in passing that the function $x\mapsto \langle v,N(x)\rangle$ corresponds to an infinitesimal translation of the surface $\Sigma$, and the function $x\mapsto \langle x,N(x)\rangle$ corresponds to an infinitesimal dilation of the surface $\Sigma$.  Also, the eigenvalues written in \cite{colding12a} are half of those written here, since our Gaussian measure has an exponent that divides by $2$ whereas theirs has a division by $4$.

If $H$ does not changes sign on $\Sigma$, then $\Sigma$ satisfying \eqref{sseq} must be a round cylinder\cite{huisken90}.  If $H$ changes sign on $\Sigma$, then $\langle x,N(x)\rangle$ changes sign on $\Sigma$.  So \eqref{lheq} and Courant's Nodal Domain theorem imply there is yet another eigenfunction of $L$ with eigenvalue greater than $2$, demonstrating that $\Sigma$ is an unstable critical point for the functional \eqref{cment}.  So, in either case, one can classify the compact smooth self-shrinkers that are stable critical points of the entropy \eqref{cment} \cite{colding12a}.

It was noted in \cite{mcgonagle15} that, if the surface $\Sigma$ satisfies the more general equation
\begin{equation}\label{sseqv2}
H(x)=\langle x,N(x)\rangle+\lambda,\qquad\forall\,x\in\Sigma
\end{equation}
for some $\lambda\in\R$, then \eqref{lvneq} still holds, but unfortunately \eqref{lheq} no longer holds \cite[Lemma 4.1]{heilman17}.  If $f(x)\colonequals \langle x,N(x)\rangle$ and if \eqref{sseqv2} holds, then
\begin{equation}\label{lheqv2}
Lf=2f+\lambda.
\end{equation}
The realization that \eqref{sseqv2} implies \eqref{lvneq} led \cite{mcgonagle15} to classify half spaces as the only stable critical points of the Gaussian surface area functional.  An analogous realization for partitions of Euclidean space was one crucial ingredient in the proofs of the Gaussian double bubble and multi-bubble conjecture \cite{heilman18,milman18b,heilman18b}.

In our noise stability problem, there are analogues of \eqref{lvneq} and \eqref{lheqv2} for a single set and for partitions of Euclidean space into multiple sets.  For simplicity of exposition, we focus for now on one set.  Since the Gaussian surface area functional can be obtained as an appropriate limit of noise stability \cite{ledoux94}, it is not unreasonable that one could generalize \eqref{lvneq} and \eqref{lheqv2} to the setting of noise stability.  However, it was not obvious how to obtain such a generalization, and it first appeared in \cite{heilman20d,heilman21}.  To generalize \eqref{lvneq} and \eqref{lheqv2}, we replace the Colding-Minicozzi entropy functional \eqref{cment} with the noise stability of a measurable Euclidean set $\Omega\subset\R^{\adimn}$ with correlation $0<\rho<1$:
\begin{equation}\label{noisestabeq}
\int_{\R^{\adimn}}1_{\Omega}(x)T_{\rho}1_{\Omega}(x)\gamma_{\adimn}(x)\,\d x.
\end{equation}
(The Ornstein-Uhleneck operator $T_{\rho}$ is defined in \eqref{oudef}.  Also, it is a standard exercise to show that \eqref{noisestabeq} is equal to $\P_{\rho}((XY)\in\Omega\times\Omega)$, i.e. this definition agrees with our previous definition of noise stability \eqref{zero0}.)  When we take the second derivative of this functional, we obtain the quantity
$$\int_{\Sigma}\int_{\Sigma}(S(f)(x)- f(x)\vnormf{\overline{\nabla }T_{\rho}1_{\Omega}(x)})f(x)\gamma_{\adimn}(x)\,\d x,$$
where $S$ is the second variation operator analogous to \eqref{elleq}, defined for $C^{\infty}$ compactly supported functions $f\colon\Sigma\to\R$.
$$
S(f)(x)\colonequals (1-\rho^{2})^{-(\adimn)/2}(2\pi)^{-(\adimn)/2}\int_{\Sigma}f(y)e^{-\frac{\vnorm{y-\rho x}^{2}}{2(1-\rho^{2})}}\,\d y,\qquad\forall\,x\in\Sigma\colonequals\partial\Omega.
$$
For critical points $\Omega$ of the noise stability functional \eqref{noisestabeq}, if $\Sigma\colonequals\partial\Omega$, then there is an analogue of \eqref{lvneq}:
$$
S(\langle v,N\rangle)(x)= \frac{1}{\rho}\langle v,N(x)\rangle\vnormf{\overline{\nabla }T_{\rho}1_{\Omega}(x)},\qquad\forall\,x\in\Sigma.
$$
In \eqref{lvneq} there was an eigenvalue of $1$, which has now become an eigenvalue of $1/\rho$.

However, the analogy with \eqref{lheq} is imperfect.  In the setting of Gaussian surface area, we know \cite{colding12a} that \eqref{sseq} implies \eqref{lheq}.  Yet, the assumption for noise stability that is analogous to \eqref{sseq} is unclear.  Setting $\lambda=0$ in \eqref{sseqv2} results in an eigenfunction in \eqref{lheqv2}, and this is a key fact in \cite{colding12a}.  Analogously, a critical point of noise stability satisfies $T_{\rho}1_{\Omega}(x)=c$ for all $x\in\partial\Omega$ (by Lemma \ref{firstvarmaxns}), and there is no clear way to impose e.g. a constraint on $c$ that results in a corresponding eigenfunction equation for the second variation operator $S$.  As a remedy for this issue, below we will argue that the appropriate analogue of the condition \eqref{sseq} for noise stability is a hyperstable set.

A critical point of the Gaussian surface area functional satisfies \eqref{sseqv2} and the almost-eigenfunction equation \eqref{lheqv2} holds.  Similarly, a critical point $\Omega\subset\R^{\adimn}$ of the noise stability functional satisfies the almost-eigenfunction equation (see \eqref{six1zv} below)
\begin{equation}\label{sf1eq}
S(f)(x)=\frac{1}{\rho^{2}}f(x)\vnormf{\overline{\nabla }T_{\rho}1_{\Omega}(x)}+\Big(\frac{1}{\rho^{2}}-1\Big)\rho\frac{\d}{\d\rho}T_{\rho}1_{\Omega}(x),\qquad\forall\,x\in\Sigma,
\end{equation}
where $f(x)\colonequals\langle x,N(x)\rangle$.  Without the derivative term on the right, this equation would say that $f$ is an eigenfunction of $S$ with eigenvalue $1/\rho^{2}$.  Similarly, when $\lambda=0$ in \eqref{sseqv2}, \eqref{lheqv2} says that $f$ is an eigenfunction of $L$ with eigenvalue $2$.  So, conditions guaranteeing that the derivative term in \eqref{sf1eq} are zero are analogous to the self-shrinker equation \eqref{sseq}.

In summary, the hyperstability condition for the noise stability functional is a natural analogue of the self-shrinker condition \eqref{sseq} for the Colding-Minicozzi entropy \eqref{cment}.

\subsection{More Formal Introduction}

Using a generalization of the Central Limit Theorem known as the invariance principle \cite{mossel10,isaksson11}, there is an equivalence between the discrete problem of Conjecture \ref{pisinf} and a continuous problem which is known as the Standard Simplex Conjecture \cite{isaksson11}.  For more details on this equivalence, see Section 7 of \cite{isaksson11}We begin by providing some background for the latter conjecture, stated in Conjecture \ref{conj2} below.

For any $k\geq1$, we define the Gaussian density as
\begin{equation}\label{zero0.0}
\begin{aligned}
\gamma_{k}(x)&\colonequals (2\pi)^{-k/2}e^{-\vnormt{x}^{2}/2},\qquad
\langle x,y\rangle\colonequals\sum_{i=1}^{\adimn}x_{i}y_{i},\qquad
\vnormt{x}^{2}\colonequals\langle x,x\rangle,\\
&\qquad\forall\,x=(x_{1},\ldots,x_{\adimn}),y=(y_{1},\ldots,y_{\adimn})\in\R^{\adimn}.
\end{aligned}
\end{equation}

Let $z_{1},\ldots,z_{m}\in\R^{\adimn}$ be the vertices of a regular simplex in $\R^{\adimn}$ centered at the origin.  For any $1\leq i\leq m$, define
\begin{equation}\label{wdef}
\Omega_{i}\colonequals\{x\in\R^{\adimn}\colon\langle x,z_{i}\rangle=\max_{1\leq j\leq m}\langle x,z_{j}\rangle\}.
\end{equation}
We refer to any sets satisfying \eqref{wdef} as \textbf{cones over a regular simplex}.

Let $f\colon\R^{\adimn}\to[0,1]$ be measurable and let $\rho\in(-1,1)$.  Define the \textbf{Ornstein-Uhlenbeck operator with correlation $\rho$} applied to $f$ by
\begin{equation}\label{oudef}
\begin{aligned}
T_{\rho}f(x)
&\colonequals\int_{\R^{\adimn}}f(x\rho+y\sqrt{1-\rho^{2}})\gamma_{\adimn}(y)\,\d y\\
&=(1-\rho^{2})^{-(\adimn)/2}(2\pi)^{-(\adimn)/2}\int_{\R^{\adimn}}f(y)e^{-\frac{\vnorm{y-\rho x}^{2}}{2(1-\rho^{2})}}\,\d y,
\qquad\forall x\in\R^{\adimn}.
\end{aligned}
\end{equation}
$T_{\rho}$ is a parametrization of the Ornstein-Uhlenbeck operator, which gives a fundamental solution of the (Gaussian) heat equation when $\rho\neq0$:
\begin{equation}\label{oup}
\frac{d}{d\rho}T_{\rho}f(x)=\frac{1}{\rho}\Big(-\overline{\Delta} T_{\rho}f(x)+\langle x,\overline{\nabla}T_{\rho}f(x)\rangle\Big),\qquad\forall\,x\in\R^{\adimn}.
\end{equation}
Here $\overline{\Delta}\colonequals\sum_{i=1}^{\adimn}\partial^{2}/\partial x_{i}^{2}$ and $\overline{\nabla}$ is the usual gradient on $\R^{\adimn}$.  $T_{\rho}$ is not a semigroup, but it satisfies $T_{\rho_{1}}T_{\rho_{2}}=T_{\rho_{1}\rho_{2}}$ for all $\rho_{1},\rho_{2}\in(0,1)$.  We have chosen this definition since the usual Ornstein-Uhlenbeck operator is only defined for $\rho\in[0,1]$.

\begin{definition}[\embolden{Noise Stability}]\label{noisedef}
Let $\Omega\subset\R^{\adimn}$ be measurable.  Let $\rho\in(-1,1)$.  We define the \textit{noise stability} of the set $\Omega$ with correlation $\rho$ to be
$$\int_{\R^{\adimn}}1_{\Omega}(x)T_{\rho}1_{\Omega}(x)\gamma_{\adimn}(x)\,\d x
\stackrel{\eqref{oudef}}{=}(2\pi)^{-(\adimn)}(1-\rho^{2})^{-(\adimn)/2}\int_{\Omega}\int_{\Omega}e^{\frac{-\|x\|^{2}-\|y\|^{2}+2\rho\langle x,y\rangle}{2(1-\rho^{2})}}\,\d x\d y.$$
Equivalently, if $X=(X_{1},\ldots,X_{\adimn}),Y=(Y_{1},\ldots,Y_{\adimn})\in\R^{\adimn}$ are $(\adimn)$-dimensional jointly Gaussian distributed random vectors with $\E X_{i}Y_{j}=\rho\cdot1_{(i=j)}$ for all $i,j\in\{1,\ldots,\adimn\}$, then
$$\int_{\R^{\adimn}}1_{\Omega}(x)T_{\rho}1_{\Omega}(x)\gamma_{\adimn}(x)\,\d x=\mathbb{P}((X,Y)\in \Omega\times \Omega).$$
\end{definition}

Maximizing the noise stability of a Euclidean partition is the continuous analogue of finding a voting method that is most stable to random corruption of votes, among voting methods where each voter has a small influence on the election's outcome.

\begin{prob}[\embolden{Standard Simplex Problem}, {\cite{isaksson11}}]\label{prob2}
Let $m\geq3$.  Fix $a_{1},\ldots,a_{m}>0$ such that $\sum_{i=1}^{m}a_{i}=1$.  Fix $\rho\in(0,1)$.  Find measurable sets $\Omega_{1},\ldots\Omega_{m}\subset\R^{\adimn}$ with $\cup_{i=1}^{m}\Omega_{i}=\R^{\adimn}$ and $\gamma_{\adimn}(\Omega_{i})=a_{i}$ for all $1\leq i\leq m$ that maximize
$$\sum_{i=1}^{m}\int_{\R^{\adimn}}1_{\Omega_{i}}(x)T_{\rho}1_{\Omega_{i}}(x)\gamma_{\adimn}(x)\,\d x,$$
subject to the above constraints.
\end{prob}

We can now state the continuous version of Conjecture \ref{pisinf}.

\begin{conj}[\embolden{Standard Simplex Conjecture} {\cite{isaksson11}}]\label{conj2}
Let $\Omega_{1},\ldots\Omega_{m}\subset\R^{\adimn}$ maximize Problem \ref{prob2}.  Assume that $m-1\leq\adimn$.  Fix $\rho\in(0,1)$.  Let $z_{1},\ldots,z_{m}\in\R^{\adimn}$ be the vertices of a regular simplex in $\R^{\adimn}$ centered at the origin.  Then $\exists$ $w\in\R^{\adimn}$ such that, for all $1\leq i\leq m$,
$$\Omega_{i}=w+\{x\in\R^{\adimn}\colon\langle x,z_{i}\rangle=\max_{1\leq j\leq m}\langle x,z_{j}\rangle\}.$$
\end{conj}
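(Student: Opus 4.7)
The plan is to attack the Standard Simplex Conjecture by \emph{removing} the hyperstability hypothesis from Theorem \ref{main1}, i.e., by proving that every maximizer of Problem \ref{prob2} (in the equal-volume case $a_{i}=1/m$ forced by the target conclusion) is automatically hyperstable in the sense of Definition \ref{hyppart}, so that the conditional classification extends to the full unconditional statement.

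First I would establish existence and regularity of a maximizer $\Omega_{1},\ldots,\Omega_{m}$. Existence follows from upper semicontinuity of noise stability under $L^{1}$ convergence of indicator functions together with compactness of the space of partitions with prescribed Gaussian masses (modulo symmetric differences, using concentration-compactness to rule out mass escaping to infinity). The standard first-variation computation yields that along each interface $\partial\Omega_{i}\cap\partial\Omega_{j}$ the function $T_{\rho}1_{\Omega_{i}}-T_{\rho}1_{\Omega_{j}}$ is constant; since each $T_{\rho}1_{\Omega_{k}}$ is real-analytic on $\R^{\adimn}$, every interface is a piece of a real-analytic hypersurface, and so Definition \ref{hyppart} can be tested against genuine smooth normal variations of those interfaces.

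The main step, and by far the dominant obstacle, is to establish the mixed-derivative identity
\begin{equation*}
\frac{\d^{2}}{\d s\,\d\rho}\Big|_{s=\rho=0}\sum_{i=1}^{m}\P_{\rho}((X,Y)\in\Omega_{i}^{(s)}\times\Omega_{i}^{(s)})=0
\end{equation*}
for every volume-preserving, first-order-critical variation $\{\Omega_{i}^{(s)}\}$. My strategy would be to exploit the multiplicative identity $T_{\rho_{1}}T_{\rho_{2}}=T_{\rho_{1}\rho_{2}}$ to force scale-invariance of the maximizer: if $\Omega_{1},\ldots,\Omega_{m}$ maximize noise stability at a single correlation $\rho$, I would try to bootstrap the first-variation condition $T_{\rho}1_{\Omega_{i}}=T_{\rho}1_{\Omega_{j}}+\mathrm{const}$ on interfaces to show that the same partition maximizes at $\rho^{2},\rho^{3},\ldots$, hence by continuity at every correlation in $(0,1)$. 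A partition optimal across all correlations is forced to be dilation-invariant (since differentiating the optimality identity in $\rho$ produces an extra almost-eigenfunction relation for the operator $S$ of \eqref{sf1eq}, collapsing the error term $(\rho^{-2}-1)\rho \frac{\d}{\d\rho}T_{\rho}1_{\Omega}$), and dilation-invariance implies hyperstability since the $\rho$-derivative at $\rho=0$ of a scale-invariant noise-stability functional is controlled by pure geometry. An alternative route would be a log-concavity style inequality for $\rho\mapsto$ (maximum noise stability with fixed volumes), leveraging the tensorization properties of $\gamma_{\adimn}$.

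With hyperstability secured, the conclusion for $m\in\{3,4\}$ is Theorem \ref{main1}. For $m\geq 5$ the analogous conditional classification of dilation-invariant hyperstable partitions is incomplete (Remark \ref{m5rk}), so I would complete it by reducing to a spherical problem: scale-invariance forces each $\Omega_{i}$ to be a cone over a region $S_{i}\subset S^{\sdimn}$, turning Problem \ref{prob2} into the maximization of a rotationally invariant functional on spherical partitions with uniform mass $1/m$. Using the hypothesis $m-1\leq\adimn$, together with a spherical symmetrization argument and the spectral structure of the Laplacian on $S^{\sdimn}$ (specifically the degree-one eigenfunctions $\langle z_{i},\,\cdot\,\rangle$ paired with the regular simplex vertices), I would identify the simplicial partition as the unique optimizer and then lift back to the cone decomposition, recovering the vector $w\in\R^{\adimn}$ of the statement. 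The overwhelming obstacle remains the hyperstability step: there is presently no mechanism known to force scale-invariance of noise-stability maximizers, and it is exactly this gap that makes Conjecture \ref{conj2} an open problem and that motivates the conditional framework of the paper.
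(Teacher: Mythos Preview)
The statement you are attempting is Conjecture \ref{conj2}, which the paper does \emph{not} prove; indeed the paper remarks immediately afterward that, as literally stated (with arbitrary $a_{i}$ in Problem \ref{prob2}), the conjecture is already known to be false, and the surviving equal-measure case is open. The paper's contribution is precisely the conditional Theorem \ref{main1}: \emph{if} a maximizer is hyperstable, \emph{then} it is simplicial (for $m\le 4$). There is thus no proof in the paper for your proposal to be compared against.

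Your plan is to close the gap by showing that every maximizer is automatically hyperstable, and you correctly flag this as the decisive obstacle. But the concrete mechanisms you offer do not work. The bootstrap via $T_{\rho_{1}}T_{\rho_{2}}=T_{\rho_{1}\rho_{2}}$ does not transfer optimality from $\rho$ to $\rho^{2}$: the identity gives $T_{\rho^{2}}(1_{\Omega_{i}}-1_{\Omega_{j}})=T_{\rho}\bigl(T_{\rho}(1_{\Omega_{i}}-1_{\Omega_{j}})\bigr)$, but the first-variation condition of Lemma \ref{firstvarmaxns} only pins down $T_{\rho}(1_{\Omega_{i}}-1_{\Omega_{j}})$ \emph{on} $\Sigma_{ij}$, not globally, so applying $T_{\rho}$ again yields nothing about the level sets at $\rho^{2}$; and even the critical-point condition at $\rho^{2}$ would fall short of the maximality you need. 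The ``log-concavity in $\rho$'' alternative is a hope, not an argument. For $m\ge 5$ you compound the difficulty: even granting dilation invariance, the paper explicitly notes in Remark \ref{m5rk} that classifying dilation-invariant optimizers is open because interface flatness is not known, and your appeal to spherical symmetrization and degree-one eigenfunctions is a wish list rather than a reduction. Finally, in the equal-measure case one expects $w=0$ (as the paper states right after Conjecture \ref{conj2}), and any dilation-invariance route would force $w=0$, so ``recovering $w$'' is not an additional step. In short, you have accurately located why the conjecture is open and recapitulated the paper's conditional architecture, but none of the proposed steps constitutes a proof or even verifiable progress toward removing the hyperstability hypothesis.
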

It is known that Conjecture \ref{conj2} is false when $(a_{1},\ldots,a_{m})\neq(1/m,\ldots,1/m)$ \cite{heilman14}.  In the remaining case that $a_{i}=1/m$ for all $1\leq i\leq m$, it is assumed that $w=0$ in Conjecture \ref{conj2}.


\subsection{Plurality is Stablest Conjecture}

As previously mentioned, the Standard Simplex Conjecture \cite{isaksson11} stated in Conjecture \ref{conj2} is essentially equivalent to the Plurality is Stablest Conjecture from Conjecture \ref{pisinf}.   After making several definitions, we state a formal version of Conjecture \ref{pisinf} as Conjecture \ref{prob4} below.

If $g\colon\{1,\ldots,m\}^{\sdimn}\to\R$ and $1\leq i\leq\sdimn$, we denote
$$ \E(g)\colonequals m^{-\sdimn}\sum_{\omega\in\{1,\ldots,m\}^{\sdimn}} g(\omega)$$
$$\E_{i}(g)(\omega_{1},\ldots,\omega_{i-1},\omega_{i+1},\ldots,\omega_{\sdimn})\colonequals m^{-1}\sum_{\omega_{i}\in\{1,\ldots,m\}} g(\omega_{1},\ldots,\omega_{n})$$
$$\qquad\qquad\qquad\qquad\qquad\qquad\qquad\qquad\qquad\forall\,(\omega_{1},\ldots,\omega_{i-1},\omega_{i+1},\ldots,\omega_{\sdimn})\in\{1,\ldots,m\}^{\sdimn}.$$
Define also the $i^{th}$ \textbf{influence} of $g$, i.e. the influence of the $i^{th}$ voter of $g$, as
\begin{equation}\label{infdef}
\mathrm{Inf}_{i}(g)\colonequals \E [(g-\E_{i}g)^{2}].
\end{equation}
Let
\begin{equation}\label{deltadef}
\Delta_{m}\colonequals\{(y_{1},\ldots,y_{m})\in\R^{m}\colon y_{1}+\cdots+y_{m}=1,\,\forall\,1\leq i\leq m,\,y_{i}\geq0\}.
\end{equation}
If $f\colon\{1,\ldots,m\}^{\sdimn}\to\Delta_{m}$, we denote the coordinates of $f$ as $f=(f_{1},\ldots,f_{m})$.  For any $\omega\in\Z^{\sdimn}$, we denote $\vnormt{\omega}_{0}$ as the number of nonzero coordinates of $\omega$.  The \textbf{noise stability} of $g\colon\{1,\ldots,m\}^{\sdimn}\to\R$ with parameter $\rho\in(-1,1)$ is
\begin{flalign*}
S_{\rho} g
&\colonequals m^{-\sdimn}\sum_{\omega\in\{1,\ldots,m\}^{\sdimn}} g(\omega)\E_{\rho} g(\delta)\\
&=m^{-\sdimn}\sum_{\omega\in\{1,\ldots,m\}^{\sdimn}} g(\omega)\sum_{\sigma\in\{1,\ldots,m\}^{\sdimn}}\left(\frac{1-(m-1)\rho}{m}\right)^{\sdimn-\vnormt{\sigma-\omega}_{0}}
\left(\frac{1-\rho}{m}\right)^{\vnormt{\sigma-\omega}_{0}} g(\sigma).
\end{flalign*}
Equivalently, conditional on $\omega$, $\E_{\rho}g(\delta)$ is defined so that for all $1\leq i\leq\sdimn$, $\delta_{i}=\omega_{i}$ with probability $\frac{1-(m-1)\rho}{m}$, and $\delta_{i}$ is equal to any of the other $(m-1)$ elements of $\{1,\ldots,m\}$ each with probability $\frac{1-\rho}{m}$, and so that $\delta_{1},\ldots,\delta_{\sdimn}$ are independent.

The \textbf{noise stability} of $f\colon\{1,\ldots,m\}^{\sdimn}\to\Delta_{m}$ with parameter $\rho\in(-1,1)$ is
$$S_{\rho}f\colonequals\sum_{i=1}^{m}S_{\rho}f_{i}.$$

Let $m\geq2$, $k\geq3$.  For each $j\in\{1,\ldots,m\}$, let $e_{j}=(0,\ldots,0,1,0,\ldots,0)\in\R^{m}$ be the $j^{th}$ unit coordinate vector.  Define the \textbf{plurality} function $\mathrm{PLUR}_{m,\sdimn}\colon\{1,\ldots,m\}^{\sdimn}\to\Delta_{m}$ for $m$ candidates and $\sdimn$ voters such that for all $\omega\in\{1,\ldots,m\}^{\sdimn}$.
$$\mathrm{PLUR}_{m,\sdimn}(\omega)
\colonequals\begin{cases}
e_{j}&,\mbox{if }\abs{\{i\in\{1,\ldots,m\}\colon\omega_{i}=j\}}>\abs{\{i\in\{1,\ldots,m\}\colon\omega_{i}=r\}},\\
&\qquad\qquad\qquad\qquad\forall\,r\in\{1,\ldots,m\}\setminus\{j\}\\
\frac{1}{m}\sum_{i=1}^{m}e_{i}&,\mbox{otherwise}.
\end{cases}
$$

We can now state the more formal version of Conjecture \ref{pisinf}.

\begin{conj}[\embolden{Plurality is Stablest, Discrete Version}]\label{prob4}
For any $m\geq2$, $\rho\in[0,1]$, $\epsilon>0$, there exists $\tau>0$ such that if $f\colon\{1,\ldots,m\}^{\sdimn}\to\Delta_{m}$ satisfies $\mathrm{Inf}_{i}(f_{j})\leq\tau$ for all $1\leq i\leq\sdimn$ and for all $1\leq j\leq m$, and if $\E f=\frac{1}{m}\sum_{i=1}^{m}e_{i}$, then
$$
S_{\rho}f\leq \lim_{\sdimn\to\infty}S_{\rho}\mathrm{PLUR}_{m,\sdimn}+\epsilon.
$$
\end{conj}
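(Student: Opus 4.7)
The plan is to reduce Conjecture \ref{prob4} to the continuous Standard Simplex Conjecture (Conjecture \ref{conj2}) via the noise-stability invariance principle, and then invoke the conditional Theorem \ref{main1}, which settles Conjecture \ref{conj2} for $m\in\{3,4\}$ under the hyperstability assumption. First, I would apply the multidimensional invariance principle of Mossel--O'Donnell--Oleszkiewicz as adapted to the simplex-valued setting in \cite{isaksson11}: given $\tau>0$ and $f\colon\{1,\ldots,m\}^{\sdimn}\to\Delta_{m}$ with all influences $\mathrm{Inf}_{i}(f_{j})\leq\tau$, the joint distribution of $f$ under $\rho$-correlated random inputs can be matched (up to an error $o_{\tau\to 0}(1)$ in the noise stability functional) by a low-degree multilinear polynomial applied to Gaussian inputs. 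Smoothing first by $T_{1-\eta}$ for small $\eta>0$ reduces to the low-degree case, where the invariance principle is effective. The price is a small additive error that can be absorbed into $\epsilon$ once $\tau$ is chosen sufficiently small.

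Second, composing the resulting Gaussian vector-valued function with rounding to the simplex, one converts $f$ into a measurable partition $\Omega_{1}^{f},\ldots,\Omega_{m}^{f}\subset\R^{N}$ (for $N$ large) with $\P(X\in\Omega_{i}^{f})=1/m+o_{\tau\to 0}(1)$ and $\sum_{i}\P_{\rho}((X,Y)\in\Omega_{i}^{f}\times\Omega_{i}^{f})\geq S_{\rho}f-o_{\tau\to 0}(1)$. A routine rebalancing (adjusting sets of vanishing measure to enforce $\P(X\in\Omega_{i}^{f})=1/m$ exactly) preserves the noise stability up to $o_{\tau\to 0}(1)$. At this point the problem becomes: upper bound $\sum_{i=1}^{m}\P_{\rho}((X,Y)\in\Theta_{i}\times\Theta_{i})$ over all equi-measure partitions $\Theta_{1},\ldots,\Theta_{m}$ of $\R^{N}$, which is precisely Problem \ref{prob2} with $a_{i}=1/m$.

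Third, I would apply Theorem \ref{main1}: under the hypothesis that the maximizing partition of Problem \ref{prob2} is hyperstable (Definition \ref{hyppart}), the maximizer for $m\in\{3,4\}$ consists of the cones \eqref{wdef} over the regular simplex. The value of the noise stability on these cones is, by a direct computation (or by the CLT), equal to $\lim_{\sdimn\to\infty}S_{\rho}\mathrm{PLUR}_{m,\sdimn}$, since plurality on $\sdimn$ i.i.d.\ uniform inputs converges in distribution to the simplex cone partition in the Gaussian variables given by the empirical coordinate-frequency vector. Combining these three steps yields $S_{\rho}f\leq \lim_{\sdimn\to\infty}S_{\rho}\mathrm{PLUR}_{m,\sdimn}+\epsilon$ provided $\tau=\tau(m,\rho,\epsilon)$ is chosen small enough.

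The principal obstacles are exactly those flagged in the paper. The restriction to $m\in\{3,4\}$ is forced by the present inability to classify dilation-invariant hyperstable partitions for $m\geq 5$ (Remark \ref{m5rk}). More fundamentally, the conclusion is conditional on the hyperstability hypothesis on the maximizer, which plays the role analogous to the self-shrinker equation \eqref{sseq} in the Colding--Minicozzi classification of stable critical points of the entropy \eqref{cment}. Removing this hypothesis requires showing that any maximizer of Problem \ref{prob2} automatically satisfies the vanishing of the mixed derivative $\frac{\d^{2}}{\d s\,\d\rho}|_{s=\rho=0}\sum_{i}\P_{\rho}((X,Y)\in\Omega_{i}^{(s)}\times\Omega_{i}^{(s)})=0$ for all admissible variations, which is the analogue of showing that critical points of the Gaussian entropy that arise from equi-measure constraints are automatically dilation-critical. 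This is the step I expect to be the hardest, and it is left open in the paper.
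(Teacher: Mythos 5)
The statement you were asked to prove is a \emph{conjecture}, and the paper offers no proof of it. Conjecture \ref{prob4} is stated as an open problem; the paper only records that the case $m=2$ is known (Theorem \ref{misg}, citing \cite{mossel10}) and that the case $m=3$ with small $\rho$ was proven in \cite{heilman20d}. What the paper actually delivers is the \emph{conditional, continuous} Theorem \ref{main1}, together with a pointer (in the ``More Formal Introduction'') to Section 7 of \cite{isaksson11} for the invariance-principle equivalence between the discrete and continuous formulations. There is no in-paper derivation of the discrete statement to compare your argument against.

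With that understood, your sketch is the correct strategy and is essentially the chain that the paper's abstract alludes to when it claims a ``conditional version of the Plurality is Stablest Conjecture for $3$ or $4$ candidates'': (i) smooth and apply the Mossel--O'Donnell--Oleszkiewicz invariance principle as adapted in \cite{isaksson11} to pass from low-influence $f$ on $\{1,\ldots,m\}^{\sdimn}$ to a Gaussian partition with nearly equal masses; (ii) rebalance to exact masses $1/m$; (iii) bound the resulting Gaussian noise stability by the optimum in Problem \ref{prob2}, which under the hyperstability hypothesis and for $m\in\{3,4\}$ Theorem \ref{main1} identifies as the simplex-cone partition; (iv) identify that optimum with $\lim_{\sdimn\to\infty}S_{\rho}\mathrm{PLUR}_{m,\sdimn}$ by the central limit theorem. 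You also correctly flag the two limitations: the argument is conditional on hyperstability of the maximizer, and the classification of dilation-invariant hyperstable partitions is only available for $m\leq 4$ (Remark \ref{m5rk}).

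Two points deserve an explicit caveat. First, what you obtain is strictly weaker than the conjecture as stated: the conjecture asserts the inequality for \emph{all} $m\geq 2$ and is unconditional, whereas your chain (and the paper's Theorem \ref{main1}) gives a conditional result and only for $m\in\{3,4\}$; the case $m=2$ is handled separately by \cite{mossel10}, and $m\geq 5$ remains entirely open. So you should not present this as a proof of Conjecture \ref{prob4} but of a conditional, restricted version. Second, the conjecture allows $\rho\in[0,1]$ while Theorem \ref{main1} is stated for $\rho\in(0,1)$. The endpoints need separate (elementary) arguments: at $\rho=0$ one has $S_{0}f=\sum_{i}(\E f_{i})^{2}=1/m$ for every balanced $f$, so the inequality is an equality; at $\rho=1$ one has $S_{1}f=\sum_{i}\E[f_{i}^{2}]\leq 1$ and $\lim_{\sdimn\to\infty}S_{1}\mathrm{PLUR}_{m,\sdimn}=1$ since ties vanish asymptotically. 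These are trivial but should be recorded if you want the statement exactly as written.

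Finally, steps (i)--(ii) of your sketch hide quantitative work. The smoothing parameter $\eta$, the influence threshold $\tau$, and the rebalancing error all have to be tied together so that the cumulative loss is at most $\epsilon$. This is precisely the content of \cite[Section 7]{isaksson11}, and your ``$o_{\tau\to 0}(1)$'' bookkeeping is the right shape, but a careful writeup would need to import those estimates explicitly rather than assert them.
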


One main result of \cite{heilman20d} is: $\exists$ $\rho_{0}>0$ such that Conjecture \ref{prob4} is true for $m=3$ for all $0<\rho<\rho_{0}$, for all $n\geq1$.  The only previously known case of Conjecture \ref{prob4} was the following.

\begin{theorem}[\embolden{Majority is Stablest, Formal, Biased Case}, {\cite[Theorem 4.4]{mossel10}}]\label{misg}
Conjecture \ref{prob4} is true when $m=2$.
\end{theorem}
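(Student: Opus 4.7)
The plan is to follow the invariance-principle-plus-Borell strategy of \cite{mossel10}, which generalizes the original Majority-is-Stablest (balanced Boolean) theorem to $[0,1]$-valued functions with an arbitrary mean $\mu \in [0,1]$. For $m=2$, the constraint $f_1+f_2\equiv 1$ lets me reduce to a single function $g\colonequals f_1\colon\{1,2\}^{\sdimn}\to[0,1]$ with $\E g=\mu=1/2$ and all influences at most $\tau$; a short expansion gives $S_{\rho}f=2S_{\rho}g$, so the task becomes upper-bounding $\langle g, U_{\rho}g\rangle$ under the product measure. Note the values of $g$ are genuinely $[0,1]$-valued (not Boolean), so one cannot simply invoke Sheppard's formula for a balanced half-space and Walsh-Fourier combinatorics; the biased-capable proof of \cite[Thm.\ 4.4]{mossel10} is required even when $\mu=1/2$.

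First I would regularize $g$ by applying $U_{1-\eta}$ for small $\eta>0$ and invoke Bonami-Beckner hypercontractivity, whose constants depend on $\min(\mu,1-\mu)$, to truncate the Efron-Stein / Walsh-Fourier expansion to degree at most $d=d(\epsilon,\rho)$. The small-influence hypothesis is preserved and the noise stability changes by at most $O(\epsilon)$. Second, I would apply the Mossel-O'Donnell-Oleszkiewicz invariance principle to the bounded-degree multilinear polynomial $Q$ representing the truncation of $g$: replacing each of the $\pm 1$ Bernoulli inputs by independent standard Gaussians (matching the first two moments) changes any smooth test functional by $o_\tau(1)$ when individual-variable influences are $\leq\tau$. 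In particular $\langle g, U_{\rho}g\rangle$ is approximated, up to $o_\tau(1)+O(\epsilon)$, by the Gaussian noise stability $\langle Q, T_{\rho}Q\rangle$ of $Q\colon\R^{\sdimn}\to\R$.

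Third, I would apply a Lipschitz truncation $\widetilde Q\colonequals\max(0,\min(1,Q))$ so that $\widetilde Q\colon\R^{\sdimn}\to[0,1]$ with $\E\widetilde Q$ within $o(1)$ of $\mu=1/2$, and $\langle \widetilde Q, T_{\rho}\widetilde Q\rangle$ within $o(1)$ of $\langle Q, T_{\rho}Q\rangle$; a standard small correction restores the mean exactly. Then Borell's Gaussian noise stability inequality \cite{borell85} bounds $\langle \widetilde Q, T_{\rho}\widetilde Q\rangle$ by the Gaussian quadrant probability $\Gamma_{\rho}(\mu,\mu)=\P_{\rho}((X_{1},Y_{1})\in H\times H)$ where $H\subset\R$ is the half-line of Gaussian measure $\mu$. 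Specializing to $\mu=1/2$ gives $\Gamma_{\rho}(1/2,1/2)=1/4+(\arcsin\rho)/(2\pi)$, whence $S_{\rho}f\leq 2\Gamma_{\rho}(1/2,1/2)+\epsilon$. Fourth, the Central Limit Theorem applied to the majority-vote statistic $\sum_{i=1}^{\sdimn}(1_{\omega_{i}=1}-1_{\omega_{i}=2})/\sqrt{\sdimn}$ shows $\lim_{\sdimn\to\infty}S_{\rho}\mathrm{PLUR}_{2,\sdimn}=2\Gamma_{\rho}(1/2,1/2)$, matching the upper bound.

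The main obstacle is Step 2-3: the invariance principle only gives closeness of $\langle Q, \psi(Q)\rangle$ for \emph{smooth} $\psi$, yet noise stability of a $[0,1]$-valued function morally asks for the non-smooth truncation. The workaround, essential in the biased setting, is to mollify the indicator of $[0,1]$ by a sequence of Lipschitz bump functions whose Lipschitz norms blow up controllably, and to balance the Lipschitz cost against the hypercontractive decay of the high-degree part; this requires that the truncation parameters and the degree cutoff $d$ be tuned as coupled functions of $\epsilon,\rho,\tau$. The biased case magnifies this difficulty because the hypercontractivity constants degrade as $\mu\to 0$ or $1$, so the argument must be written to be uniform in $\mu$ even though we only specialize to $\mu=1/2$ here. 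Once this tuning is done, combining the four steps gives Conjecture \ref{prob4} in the case $m=2$.
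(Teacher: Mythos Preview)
The paper does not give its own proof of Theorem~\ref{misg}; it is stated as a citation to \cite[Theorem 4.4]{mossel10}, so there is no in-paper argument to compare against. Your outline is a faithful sketch of the Mossel--O'Donnell--Oleszkiewicz proof: the reduction $S_{\rho}f=2S_{\rho}f_{1}+1-2\mu$ (which at $\mu=1/2$ gives $S_{\rho}f=2S_{\rho}f_{1}$), the $U_{1-\eta}$ smoothing and hypercontractive degree truncation, the invariance principle passage to Gaussians, the Lipschitz clipping to $[0,1]$, Borell's inequality, and the CLT identification of $\lim_{\sdimn\to\infty}S_{\rho}\mathrm{PLUR}_{2,\sdimn}$ with $2\Gamma_{\rho}(1/2,1/2)$ are all correct and in the right order. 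Your observation that $f_{1}$ is genuinely $[0,1]$-valued, forcing the full ``biased'' machinery of \cite[Theorem 4.4]{mossel10} rather than the Boolean special case, is also on point.
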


For an even more general version of Theorem \ref{misg}, see \cite[Theorem 4.4]{mossel10}.  In particular, the assumption on $\E f$ can be removed, though we know this cannot be done for $m\geq3$ \cite{heilman14}.

\subsection{Outline of the Proof of the Structure Theorem}

\section{Existence and Regularity}

\subsection{Preliminaries and Notation}\label{secpre}

We say that $\Sigma\subset\R^{\adimn}$ is an $\sdimn$-dimensional $C^{\infty}$ manifold with boundary if $\Sigma$ can be locally written as the graph of a $C^{\infty}$ function on a relatively open subset of $\{(x_{1},\ldots,x_{\sdimn})\in\R^{\sdimn}\colon x_{\sdimn}\geq0\}$.  For any $(\adimn)$-dimensional $C^{\infty}$ manifold $\Omega\subset\R^{\adimn}$ such that $\partial\Omega$ itself has a boundary, we denote
\begin{equation}\label{c0def}
\begin{aligned}
C_{0}^{\infty}(\Omega;\R^{\adimn})
&\colonequals\{f\colon \Omega\to\R^{\adimn}\colon f\in C^{\infty}(\Omega;\R^{\adimn}),\, f(\partial\partial \Omega)=0,\\
&\qquad\qquad\qquad\exists\,r>0,\,f(\Omega\cap(B(0,r))^{c})=0\}.
\end{aligned}
\end{equation}
We also denote $C_{0}^{\infty}(\Omega)\colonequals C_{0}^{\infty}(\Omega;\R)$.  We let $\mathrm{div}$ denote the divergence of a vector field in $\R^{\adimn}$.  For any $r>0$ and for any $x\in\R^{\adimn}$, we let $B(x,r)\colonequals\{y\in\R^{\adimn}\colon\vnormt{x-y}\leq r\}$ be the closed Euclidean ball of radius $r$ centered at $x\in\R^{\adimn}$.  Here $\partial\partial\Omega$ refers to the $(\sdimn-1)$-dimensional boundary of $\Omega$.

\begin{definition}[\embolden{Reduced Boundary}]\label{rbdef}
A measurable set $\Omega\subset\R^{\adimn}$ has \embolden{locally finite surface area} if, for any $r>0$,
$$\sup\left\{\int_{\Omega}\mathrm{div}(X(x))\,\d x\colon X\in C_{0}^{\infty}(B(0,r),\R^{\adimn}),\, \sup_{x\in\R^{\adimn}}\vnormt{X(x)}\leq1\right\}<\infty.$$
Equivalently, $\Omega$ has locally finite surface area if $\nabla 1_{\Omega}$ is a vector-valued Radon measure such that, for any $x\in\R^{\adimn}$, the total variation
$$
\vnormt{\nabla 1_{\Omega}}(B(x,1))
\colonequals\sup_{\substack{\mathrm{partitions}\\ C_{1},\ldots,C_{m}\,\mathrm{of}\,B(x,1) \\ m\geq1}}\sum_{i=1}^{m}\vnormt{\nabla 1_{\Omega}(C_{i})}
$$
is finite \cite{cicalese12}.  If $\Omega\subset\R^{\adimn}$ has locally finite surface area, we define the \embolden{reduced boundary} $\redb \Omega$ of $\Omega$ to be the set of points $x\in\R^{\adimn}$ such that
$$N(x)\colonequals-\lim_{r\to0^{+}}\frac{\nabla 1_{\Omega}(B(x,r))}{\vnormt{\nabla 1_{\Omega}}(B(x,r))}$$
exists, and it is exactly one element of $S^{\sdimn}\colonequals\{x\in\R^{\adimn}\colon\vnorm{x}=1\}$.
\end{definition}

The reduced boundary $\redb\Omega$ is a subset of the topological boundary $\partial\Omega$.  Also, $\redb\Omega$ and $\partial\Omega$ coincide with the support of $\nabla 1_{\Omega}$, except for a set of $\sdimn$-dimensional Hausdorff measure zero.

Let $\Omega\subset\R^{\adimn}$ be an $(\adimn)$-dimensional $C^{2}$ submanifold with reduced boundary $\Sigma\colonequals\redb \Omega$.  Let $N\colon\redA\to S^{\sdimn}$ be the unit exterior normal to $\redA$.  Let $X\in C_{0}^{\infty}(\R^{\adimn},\R^{\adimn})$.  We write $X$ in its components as $X=(X_{1},\ldots,X_{\adimn})$, so that $\mathrm{div}X=\sum_{i=1}^{\adimn}\frac{\partial}{\partial x_{i}}X_{i}$.  Let $\Psi\colon\R^{\adimn}\times(-1,1)\to\R^{\adimn}$ such that
\begin{equation}\label{nine2.3}
\Psi(x,0)=x,\qquad\qquad\frac{\d}{\d s}\Psi(x,s)=X(\Psi(x,s)),\quad\forall\,x\in\R^{\adimn},\,s\in(-1,1).
\end{equation}
For any $s\in(-1,1)$, let $\Omega^{(s)}\colonequals\Psi(\Omega,s)$.  Note that $\Omega^{(0)}=\Omega$.  Let $\Sigma^{(s)}\colonequals\redb\Omega^{(s)}$, $\forall$ $s\in(-1,1)$.
\begin{definition}
We call $\{\Omega^{(s)}\}_{s\in(-1,1)}$ as defined above a \embolden{variation} of $\Omega\subset\R^{\adimn}$.  We also call $\{\Sigma^{(s)}\}_{s\in(-1,1)}$ a \embolden{variation} of $\Sigma=\redb\Omega$.
\end{definition}

For any $x\in\R^{\adimn}$ and any $s\in(-1,1)$, define
\begin{equation}\label{two9c}
V(x,s)\colonequals\int_{\Omega^{(s)}}G(x,y)\,\d y.
\end{equation}

Below, when appropriate, we let $\,\d x$ denote Lebesgue measure, restricted to a surface $\redA\subset\R^{\adimn}$.

\begin{lemma}[\embolden{Existence of a Maximizer}]\label{existlem}
Let $0<\rho<1$ and let $m\geq2$.  Then there exist measurable sets $\Omega_{1},\ldots,\Omega_{m}$ maximizing Problem \ref{prob2}.
\end{lemma}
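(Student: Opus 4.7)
The plan is to apply the direct method of the calculus of variations on a relaxed problem, then recover a genuine partition using convexity. Define the relaxed feasible set
\[
K \colonequals \Big\{(f_1,\ldots,f_m) : f_i \colon \R^{\adimn}\to[0,1] \text{ measurable},\ \sum_{i=1}^m f_i = 1 \text{ a.e.},\ \int_{\R^{\adimn}} f_i(x)\gamma_{\adimn}(x)\,\d x = a_i\ \forall\,i\Big\},
\]
a convex subset of $L^2(\R^{\adimn},\gamma_{\adimn}(x)\,\d x)^m$, and the relaxed noise-stability functional
\[
F(f_1,\ldots,f_m) \colonequals \sum_{i=1}^m \int_{\R^{\adimn}} f_i(x)\, T_\rho f_i(x)\, \gamma_{\adimn}(x)\,\d x.
\]
Indicator partitions satisfying Problem \ref{prob2} are exactly the elements of $K$ with each $f_i\in\{0,1\}$ almost everywhere, so the supremum in Problem \ref{prob2} is bounded above by $\sup_K F$, with equality whenever this supremum is attained by an indicator partition.

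First I would establish that $K$ is weakly compact in $L^2(\gamma_{\adimn})^m$: it is bounded, and each defining constraint is convex and $L^2$-closed, hence weakly closed by Mazur's theorem. The functional $F$ is moreover weakly continuous on $K$ because $T_\rho\colon L^2(\gamma_{\adimn})\to L^2(\gamma_{\adimn})$ is a compact operator for $|\rho|<1$, being diagonalized by Hermite polynomials with eigenvalues $\rho^k\to 0$. Concretely, if $f_i^{(k)}\rightharpoonup f_i$ weakly in $L^2(\gamma_{\adimn})$, then $T_\rho f_i^{(k)}\to T_\rho f_i$ strongly, so the bilinear pairing $\int f_i^{(k)} T_\rho f_i^{(k)}\,\gamma_{\adimn}\,\d x$ passes to the limit. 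Taking a maximizing sequence for $F$ and extracting a weakly convergent subsequence then produces a ``soft'' maximizer $(f_1^*,\ldots,f_m^*)\in K$ attaining $\sup_K F$.

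It remains to upgrade this soft maximizer to an indicator partition. Since the Hermite eigenvalues $\rho^k$ of $T_\rho$ are strictly positive for $\rho\in(0,1)$, the quadratic form $f\mapsto \int f\,T_\rho f\,\gamma_{\adimn}\,\d x$ is convex, hence so is $F$ on $K$. By Bauer's maximum principle, a weakly upper semicontinuous convex function on a weakly compact convex set attains its maximum at some extreme point, so it suffices to characterize the extreme points of $K$ as indicator partitions with the prescribed volumes. If $(f_1,\ldots,f_m)\in K$ is not an indicator partition, then some $f_i$ is fractional on a set of positive Gaussian measure, and since $\sum_k f_k = 1$, some $f_j$ with $j\ne i$ is also fractional on a positive-measure subset; a mean-zero perturbation $(f_i+\delta h,\, f_j-\delta h)$ with $h=1_{A_1}-c\,1_{A_2}$ supported inside the common fractional region then exhibits $(f_1,\ldots,f_m)$ as a nontrivial midpoint of two distinct elements of $K$, contradicting extremality. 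The main obstacle I anticipate is the bookkeeping in this extremality step: one must simultaneously preserve the pointwise bounds $0\le g_k\le 1$, the partition identity $\sum_k g_k=1$, and the volume equalities. The identity is handled automatically by confining the perturbation to coordinates $i$ and $j$, the volume constraints by the mean-zero condition on $h$, and the pointwise bounds by choosing $A_1,A_2$ inside a level set where $f_i$ and $f_j$ are uniformly bounded away from $0$ and $1$, together with a smallness condition on $\delta$.
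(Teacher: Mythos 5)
Your proof is correct and follows essentially the same route as the paper's analogous existence argument (Lemma \ref{existlemn} for the bilinear problem): relax to $\Delta_m$-valued functions, show the feasible set is weakly compact and the noise stability weakly continuous via compactness of $T_\rho$, then reduce to indicator partitions through extreme points. The only change from the paper's bilinear case is that you replace the ``bilinear functional is linear in each argument, hence extremized at extreme points'' step with convexity of the quadratic form $f\mapsto\int f\,T_\rho f\,\gamma_{\adimn}\,\d x$ (from the nonnegative Hermite eigenvalues $\rho^{|\alpha|}$) plus Bauer's maximum principle, which is exactly the appropriate analogue for a single partition and $\rho\in(0,1)$.
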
%

\begin{lemma}[\embolden{Regularity of a Maximizer}]\label{reglem}
Let $\Omega_{1},\ldots,\Omega_{m}\subset\R^{\adimn}$ be the measurable sets maximizing Problem \ref{prob2}, guaranteed to exist by Lemma \ref{existlem}.  Then the sets $\Omega_{1},\ldots,\Omega_{m}$ have locally finite surface area.  Moreover, for all $1\leq i\leq m$ and for all $x\in\partial\Omega_{i}$, there exists a neighborhood $U$ of $x$ such that $U\cap \partial\Omega_{i}$ is a finite union of $C^{\infty}$ $\sdimn$-dimensional manifolds.
\end{lemma}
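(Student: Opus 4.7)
The strategy is to derive Euler--Lagrange conditions from optimality, express each reduced boundary as a union of zero sets of real-analytic functions, and apply the Lojasiewicz structure theorem for real-analytic varieties. Set $u_{i}(x)\colonequals T_{\rho}1_{\Omega_{i}}(x)$ for $1\leq i\leq m$. The first claim is that there exist Lagrange multipliers $c_{1},\ldots,c_{m}\in\R$ (dual to the constraints $\P(X\in\Omega_{i})=1/m$) such that, up to Lebesgue-null sets,
\[
\Omega_{i}=\{x\in\R^{\adimn}\colon u_{i}(x)-c_{i}\geq u_{j}(x)-c_{j},\ \forall\,1\leq j\leq m\},\quad 1\leq i\leq m.
\]
Indeed, if on a positive-measure set $E\subset\Omega_{i}$ one had $u_{j}-c_{j}>u_{i}-c_{i}$ for some $j\neq i$, then a mass-transfer reassigning a small portion of $E$ from $\Omega_{i}$ to $\Omega_{j}$, together with a compensating transfer between some other pair chosen to preserve every one of the $m$ volume constraints, would strictly increase $\sum_{i=1}^{m}\int 1_{\Omega_{i}}T_{\rho}1_{\Omega_{i}}\,\d\gamma_{\adimn}$, contradicting optimality.

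Next, the Ornstein--Uhlenbeck kernel in \eqref{oudef} is entire in $x$, so each $u_{i}$ extends to a real-analytic function on $\R^{\adimn}$; hence $f_{ij}(x)\colonequals u_{i}(x)-u_{j}(x)-c_{i}+c_{j}$ is real-analytic for each $i\neq j$. If $f_{ij}\equiv 0$, then integrating against $\gamma_{\adimn}$ and using $\int u_{i}\,\d\gamma_{\adimn}=\P(X\in\Omega_{i})=1/m$ gives $c_{i}=c_{j}$, so $u_{i}\equiv u_{j}$; injectivity of $T_{\rho}$ on $L^{2}(\gamma_{\adimn})$ (its Hermite eigenvalues $\rho^{k}$ are all nonzero) then forces $\Omega_{i}=\Omega_{j}$ up to null sets, contradicting the partition property. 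Hence each $f_{ij}$ is a nontrivial real-analytic function on $\R^{\adimn}$.

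Combining the previous two paragraphs yields $\partial\Omega_{i}\subset\bigcup_{j\neq i}\{f_{ij}=0\}$. By the Lojasiewicz structure theorem, every point of a real-analytic subvariety of $\R^{\adimn}$ admits a neighborhood $U$ in which the variety decomposes as a finite disjoint union of connected real-analytic submanifolds of dimensions $0,1,\ldots,\sdimn$. Applying this to each of the finitely many varieties $\{f_{ij}=0\}$ with $j\neq i$ and refining, we obtain for each $x\in\partial\Omega_{i}$ a neighborhood $U$ in which $U\cap\partial\Omega_{i}$ is a finite union of $C^{\omega}$ (hence $C^{\infty}$) submanifolds. The $\sdimn$-dimensional strata arise from simple two-way ties in the max condition, while higher-order ties contribute strata of strictly lower dimension.

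Because each $\sdimn$-dimensional analytic hypersurface stratum has locally finite $\sdimn$-dimensional Hausdorff measure, and lower-dimensional strata carry zero $\sdimn$-measure, the Radon measure $\vnormt{\nabla 1_{\Omega_{i}}}$ is locally finite on $\R^{\adimn}$, which by Definition \ref{rbdef} is precisely local finiteness of surface area. The most delicate part of the argument is the first step: to rigorously extract the Euler--Lagrange condition one must construct genuinely volume-preserving competitors out of essentially arbitrary measurable sets, so that the resulting first-variation identity actually constrains the geometry of the $\Omega_{i}$; this requires a careful measurable selection of the transfer regions to simultaneously preserve all $m$ volume constraints. Once that first-order optimality is in hand, everything else is a direct consequence of the real-analytic geometry of Gaussian convolutions.
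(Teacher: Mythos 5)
The lemma is stated without a proof in the paper (it is imported from the cited literature, e.g.\ [heilman14, heilman20d]), but the approach you outline — first-variation/Euler--Lagrange structure, real-analyticity of $T_{\rho}1_{\Omega_{i}}$, the Lojasiewicz stratification of real-analytic varieties, and the Federer-type implication from $\mathcal{H}^{\sdimn}$-local finiteness of $\partial\Omega_{i}$ to local finiteness of perimeter — is exactly the standard route used there, and the overall architecture is sound.

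Two points deserve correction. First, a minor one: Problem~\ref{prob2} allows arbitrary measures $a_{1},\dots,a_{m}>0$ summing to $1$, not only $a_{i}=1/m$; your nontriviality argument for $f_{ij}$ then gives $T_{\rho}(1_{\Omega_{i}}-1_{\Omega_{j}})\equiv a_{i}-a_{j}$, and one concludes by observing that $1_{\Omega_{i}}-1_{\Omega_{j}}$ takes values in $\{-1,0,1\}$ while a nonzero $a_{i}-a_{j}\in(-1,1)$ cannot, and if $a_{i}=a_{j}$ then $\Omega_{i}=\Omega_{j}$ a.e.\ contradicts disjointness. Second, and more substantively: your derivation of the Euler--Lagrange condition is circular as written. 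You invoke the multipliers $c_{i}$ inside the hypothesis of the mass-transfer argument (``if $u_{j}-c_{j}>u_{i}-c_{i}$ on $E$\dots''), but the existence of the $c_{i}$ is precisely what needs to be established. A clean fix is to observe that $(f_{1},\dots,f_{m})\mapsto\sum_{i}\int f_{i}T_{\rho}f_{i}\,\d\gamma_{\adimn}$ is convex on the weakly compact convex set $\{f_{i}\in[0,1],\ \sum_{i}f_{i}=1,\ \int f_{i}\,\d\gamma_{\adimn}=a_{i}\}$ (since $T_{\rho}$ is positive semidefinite for $\rho\in(0,1)$), so a maximizer is an extreme point (a genuine partition) and, by the supporting-hyperplane inequality $\langle\nabla C(f),\tilde f-f\rangle\le 0$, also maximizes the \emph{linear} functional $\tilde f\mapsto\sum_{i}\int \tilde f_{i}u_{i}\,\d\gamma_{\adimn}$ over the same set. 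Linear-programming duality for this linear problem is what produces the multipliers $c_{1},\dots,c_{m}$ and the max-rule description of the $\Omega_{i}$. Alternatively, one can run a direct two-set swap argument to get pairwise thresholds $t_{ij}$ with $u_{j}-u_{i}\le t_{ij}$ a.e.\ on $\Omega_{i}$ and $u_{j}-u_{i}\ge t_{ij}$ a.e.\ on $\Omega_{j}$, and then a cyclic-transfer argument to show $t_{ij}+t_{jk}=t_{ik}$, which yields the $c_{i}$. Once that step is made rigorous, the rest of your proof (analyticity, Lojasiewicz stratification giving a locally finite union of $C^{\omega}\subset C^{\infty}$ submanifolds of dimension $\le\sdimn$, and Federer's criterion for locally finite perimeter) is correct.
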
%

From Lemma \ref{reglem} and Definition \ref{rbdef}, for all $1\leq i<j\leq m$, if $x\in\Sigma_{ij}$, then the unit normal vector $N_{ij}(x)\in\R^{\adimn}$ that points from $\Omega_{i}$ into $\Omega_{j}$ is well-defined on $\Sigma_{ij}$, $\big((\partial\Omega_{i})\cap(\partial\Omega_{j})\big)\setminus\Sigma_{ij}$ has Hausdorff dimension at most $\sdimn-1$, and
\begin{equation}\label{zero11}
N_{ij}(x)=\pm\frac{\overline{\nabla} T_{\rho}(1_{\Omega_{i}}-1_{\Omega_{j}})(x)}{\vnorm{\overline{\nabla} T_{\rho}(1_{\Omega_{i}}-1_{\Omega_{j}})(x)}},\qquad\forall\,x\in\Sigma_{ij}.
\end{equation}
In Lemma \ref{lemma7p} below we will show that the negative sign holds in \eqref{zero11} when $\Omega_{1},\ldots,\Omega_{m}$ maximize Problem \ref{prob2}.


%

\section{First and Second Variation}

In this section, we recall some standard facts for variations of sets with respect to the Gaussian measure.  Here is a summary of notation.

\textbf{Summary of Notation}.
\begin{itemize}
\item $T_{\rho}$ denotes the Ornstein-Uhlenbeck operator with correlation parameter $\rho\in(-1,1)$.
\item $\Omega_{1},\ldots,\Omega_{m}$ denotes a partition of $\R^{\adimn}$ into $m$ disjoint measurable sets.
\item $\redb\Omega$ denotes the reduced boundary of $\Omega\subset\R^{\adimn}$.
\item $\Sigma_{ij}\colonequals(\redb\Omega_{i})\cap(\redb\Omega_{j})$ for all $1\leq i,j\leq m$.
\item $N_{ij}(x)$ is the unit normal vector to $x\in\Sigma_{ij}$ that points from $\Omega_{i}$ into $\Omega_{j}$, so that $N_{ij}=-N_{ji}$.
\end{itemize}
Throughout the paper, unless otherwise stated, we define $G\colon\R^{\adimn}\times\R^{\adimn}\to\R$ to be the following function.  For all $x,y\in\R^{\adimn}$, $\forall$ $\rho\in(-1,1)$, define
\begin{equation}\label{gdef}
\begin{aligned}
G(x,y)&=(1-\rho^{2})^{-(\adimn)/2}(2\pi)^{-(\adimn)}e^{\frac{-\|x\|^{2}-\|y\|^{2}+2\rho\langle x,y\rangle}{2(1-\rho^{2})}}\\
&=(1-\rho^{2})^{-(\adimn)/2}\gamma_{\adimn}(x)\gamma_{\adimn}(y)e^{\frac{-\rho^{2}(\|x\|^{2}+\|y\|^{2})+2\rho\langle x,y\rangle}{2(1-\rho^{2})}}\\
&=(1-\rho^{2})^{-(\adimn)/2}(2\pi)^{-(\adimn)/2}\gamma_{\adimn}(x)e^{\frac{-\vnorm{y-\rho x}^{2}}{2(1-\rho^{2})}}.
\end{aligned}
\end{equation}

We can then rewrite the noise stability from Definition \ref{noisedef} as
$$\int_{\R^{\adimn}}1_{\Omega}(x)T_{\rho}1_{\Omega}(x)\gamma_{\adimn}(x)\,\d x
=\int_{\Omega}\int_{\Omega}G(x,y)\,\d x\d y.$$
Our first and second variation formulas for the noise stability will be written in terms of $G$.


\begin{lemma}[\embolden{The First Variation}\,{\cite{chokski07}}; also {\cite[Lemma 3.1, Equation (7)]{heilman14}}]\label{latelemma3}
Let $X\in C_{0}^{\infty}(\R^{\adimn},\R^{\adimn})$.  Let $\Omega\subset\R^{\adimn}$ be a measurable set such that $\partial\Omega$ is a locally finite union of $C^{\infty}$ manifolds.  Let $\{\Omega^{(s)}\}_{s\in(-1,1)}$ be the corresponding variation of $\Omega$.  Then
\begin{equation}\label{Bone6}
\frac{\d}{\d s}\Big|_{s=0}\int_{\R^{\adimn}} 1_{\Omega^{(s)}}(y)G(x,y)\,\d y
=\int_{\partial \Omega}G(x,y)\langle X(y),N(y)\rangle \,\d y.
\end{equation}
\end{lemma}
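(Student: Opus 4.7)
The plan is a standard change-of-variables argument followed by the divergence theorem. Since $X \in C_{0}^{\infty}(\R^{\adimn},\R^{\adimn})$, the ODE \eqref{nine2.3} generates a smooth family of diffeomorphisms $\Psi(\cdot,s)$ of $\R^{\adimn}$ for every $s\in(-1,1)$. Using $\Omega^{(s)} = \Psi(\Omega,s)$ and the change of variables $y = \Psi(z,s)$, I would first rewrite
$$\int_{\R^{\adimn}} 1_{\Omega^{(s)}}(y)\, G(x,y)\,\d y = \int_{\Omega} G(x,\Psi(z,s))\, J_{\Psi}(z,s)\,\d z,$$
where $J_{\Psi}(z,s) \colonequals |\det D_{z}\Psi(z,s)|$ is the Jacobian determinant of the flow.

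Next I would differentiate under the integral at $s=0$, which is justified by the compact support of $X$ and the smoothness of $G(x,\cdot)$ on compact subsets of $\R^{\adimn}$. From \eqref{nine2.3} we have $\partial_{s}\Psi(z,s)|_{s=0} = X(z)$, and Liouville's formula for the evolution of a Jacobian along a flow gives $\partial_{s} J_{\Psi}(z,s)|_{s=0} = \mathrm{div}(X)(z)$. By the product and chain rules,
$$\frac{\d}{\d s}\Big|_{s=0} \int_{\Omega} G(x,\Psi(z,s))\, J_{\Psi}(z,s)\,\d z = \int_{\Omega}\bigl[\langle \overline{\nabla}_{y}G(x,z),\, X(z)\rangle + G(x,z)\,\mathrm{div}(X)(z)\bigr]\,\d z,$$
and the integrand on the right is exactly $\mathrm{div}_{z}(G(x,z)\,X(z))$.

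Finally, I would apply the divergence theorem on $\Omega$. By hypothesis, $\partial\Omega$ is a locally finite union of $C^{\infty}$ manifolds, and $X$ is compactly supported, so
$$\int_{\Omega} \mathrm{div}_{z}(G(x,z)\,X(z))\,\d z = \int_{\partial\Omega} G(x,y)\,\langle X(y),N(y)\rangle\,\d y,$$
which is the claimed identity.

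The main technical point is justifying the divergence theorem when $\partial\Omega$ is only a \emph{locally finite} union of smooth pieces rather than one globally smooth hypersurface. The singular set where $\partial\Omega$ fails to be a manifold has Hausdorff dimension at most $\sdimn-1$ and thus $\sdimn$-dimensional Hausdorff measure zero, so it contributes nothing to the surface integral. One can either apply the classical divergence theorem piece by piece on the smooth strata (flux contributions across the shared lower-dimensional interfaces cancel pairwise), or invoke the De Giorgi--Federer version for sets of locally finite perimeter using the reduced boundary $\redb\Omega$ in place of $\partial\Omega$, together with the measure-theoretic unit normal $N$ from Definition \ref{rbdef}. In either case, the compact support of $X$ eliminates any issue at infinity, and the restriction of $G$ to $\partial\Omega$ is smooth, so the boundary integrand is well-defined.
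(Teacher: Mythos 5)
The paper does not prove Lemma \ref{latelemma3} itself; it cites the result to \cite{chokski07} and to \cite[Lemma 3.1, Equation (7)]{heilman14}, where it is derived exactly as you do here: flow change of variables $y=\Psi(z,s)$, Liouville's formula $\partial_s J_\Psi|_{s=0}=\mathrm{div}(X)$, differentiation under the integral (justified by the compact support of $X$ and smoothness of $G(x,\cdot)$), recognition of the integrand as $\mathrm{div}_z(G(x,z)X(z))$, and the Gauss--Green/De Giorgi--Federer theorem on the reduced boundary. Your argument is correct and is the standard route; your remark on handling the locally finite union of $C^\infty$ pieces via the reduced boundary (with the lower-dimensional singular stratum contributing zero $\mathcal{H}^{\sdimn}$-measure) is exactly the right justification.
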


The following Lemma is a consequence of \eqref{Bone6} and Lemma \ref{reglem}.

\begin{lemma}[\embolden{The First Variation for Maximizers}]\label{firstvarmaxns}
Suppose $\Omega_{1},\ldots,\Omega_{m}\subset\R^{\adimn}$ maximize Problem \ref{prob2}.  Then for all $1\leq i<j\leq m$, there exists $c_{ij}\in\R$ such that
$$T_{\rho}(1_{\Omega_{i}}-1_{\Omega_{j}})(x)=c_{ij},\qquad\forall\,x\in\Sigma_{ij}.$$
\end{lemma}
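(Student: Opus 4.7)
The plan is to extract the claim as a standard Euler--Lagrange condition for the constrained maximization in Problem \ref{prob2}, with the volume constraints $\gamma_{\adimn}(\Omega_{i})=1/m$ supplying Lagrange multipliers $\lambda_{1},\ldots,\lambda_{m}\in\R$ and the test objects being the normal components of compactly supported vector fields along the reduced boundary. The cleanest route is to compute the full first variation of the noise stability sum, compare it to the first variation of the constraints, and then specialize $X$ to be supported on a smooth piece of a single interface $\Sigma_{ij}$ to read off the pointwise identity.

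First, using Definition \ref{noisedef} and \eqref{gdef}, the objective becomes $\sum_{i=1}^{m}\int_{\Omega_{i}}\int_{\Omega_{i}}G(x,y)\,\d x\,\d y$. Applying Lemma \ref{latelemma3} to each summand in both the outer and inner integrals and using $G(x,y)=G(y,x)$ together with the identity $\int_{\Omega_{i}}G(x,y)\,\d y=\gamma_{\adimn}(x)T_{\rho}1_{\Omega_{i}}(x)$ (immediate from \eqref{gdef} and \eqref{oudef}), the first variation of the objective equals
\[
2\sum_{i=1}^{m}\int_{\partial\Omega_{i}}\gamma_{\adimn}(x)\,T_{\rho}1_{\Omega_{i}}(x)\,\langle X(x),N_{i}(x)\rangle\,\d x,
\]
where $N_{i}$ is the outward unit normal to $\Omega_{i}$. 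Writing $\partial\Omega_{i}=\bigcup_{j\neq i}\Sigma_{ij}$ (up to a set of Hausdorff dimension at most $\sdimn-1$) and using $N_{i}|_{\Sigma_{ij}}=N_{ij}=-N_{j}|_{\Sigma_{ij}}$, this re-pairs to
\[
2\sum_{i<j}\int_{\Sigma_{ij}}\gamma_{\adimn}(x)\bigl(T_{\rho}1_{\Omega_{i}}(x)-T_{\rho}1_{\Omega_{j}}(x)\bigr)\langle X(x),N_{ij}(x)\rangle\,\d x,
\]
while the same rearrangement applied to each volume constraint $\gamma_{\adimn}(\Omega_{i})=1/m$ gives $\sum_{j\neq i}\int_{\Sigma_{ij}}\gamma_{\adimn}\langle X,N_{ij}\rangle\,\d x=0$. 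Lagrange multipliers at the maximizer then produce $\lambda_{1},\ldots,\lambda_{m}\in\R$ with
\[
\sum_{i<j}\int_{\Sigma_{ij}}\gamma_{\adimn}\bigl[2(T_{\rho}1_{\Omega_{i}}-T_{\rho}1_{\Omega_{j}})-(\lambda_{i}-\lambda_{j})\bigr]\langle X,N_{ij}\rangle\,\d x=0
\]
for every $X\in C_{0}^{\infty}(\R^{\adimn},\R^{\adimn})$.

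To convert this weak identity into the pointwise claim, I localize $X$ to a small ball $B$ around a point $p$ in the $C^{\infty}$ smooth part of $\Sigma_{ij}$, which exists by Lemma \ref{reglem} combined with the fact noted after \eqref{zero11} that $((\partial\Omega_{i})\cap(\partial\Omega_{j}))\setminus\Sigma_{ij}$ has Hausdorff dimension at most $\sdimn-1$. Shrinking $B$ further so that it avoids every $\Sigma_{k\ell}$ with $\{k,\ell\}\neq\{i,j\}$, only the $(i,j)$ term in the displayed sum survives, and $\langle X,N_{ij}\rangle$ may be chosen to equal an arbitrary element of $C_{0}^{\infty}(B\cap\Sigma_{ij})$. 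The fundamental lemma of the calculus of variations then forces $T_{\rho}1_{\Omega_{i}}-T_{\rho}1_{\Omega_{j}}\equiv(\lambda_{i}-\lambda_{j})/2$ on $B\cap\Sigma_{ij}$; setting $c_{ij}:=(\lambda_{i}-\lambda_{j})/2$ and invoking linearity of $T_{\rho}$ yields the lemma.

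The main obstacle will be the rigorous justification of the Lagrange multiplier step, since by Lemma \ref{reglem} the boundary $\partial\Omega_{i}$ is only a \emph{locally} finite union of $C^{\infty}$ manifolds and the ambient space of partitions is not a standard Banach manifold. The remedy is that every step above only invokes Lemma \ref{latelemma3} on the smooth part of the reduced boundary, and the volume constraints are a finite-dimensional family of linear functionals, so a finite-dimensional Lagrange multiplier argument on the quotient by these $m$ constraints suffices. The universality of the constant $c_{ij}$ across the whole of $\Sigma_{ij}$ -- including across distinct connected components -- requires no extra argument, since it is built into the globality of the scalar multipliers $\lambda_{i},\lambda_{j}$.
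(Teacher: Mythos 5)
The paper gives no explicit proof of this lemma, only the remark that it ``is a consequence of \eqref{Bone6} and Lemma \ref{reglem},'' so the fair comparison is to the implicit argument the paper expects the reader to supply. Your proposal is correct and is the standard constrained first-variation argument one would indeed write out. The computation of the first variation of the noise stability sum (using symmetry of $G$ and the identity $\int_{\Omega_i}G(x,y)\,\d y = \gamma_{\adimn}(x)T_\rho 1_{\Omega_i}(x)$), the re-pairing over interfaces $\Sigma_{ij}$, the first variation of volume via Lemma~\ref{lemma41}, and the final localization all check out.

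Two remarks. First, the step you yourself flag -- justifying the Lagrange multipliers -- is exactly where you should invoke Lemma~\ref{lemma27} (the Extension Lemma). What you need at a maximizer is that any normal-component datum $(f_{ij})$ with $\sum_{j\neq i}\int_{\Sigma_{ij}}f_{ij}\gamma = 0$ for all $i$ can be realized by a variation that preserves the volumes \emph{exactly}, so that criticality of the constrained maximum implies the first variation of the objective vanishes on that datum. Your remedy (``finite-dimensional Lagrange argument on the quotient'') gestures at this but does not by itself establish that the constraint map is a submersion in the relevant sense; Lemma~\ref{lemma27} is precisely the paper's tool for this. Second, with the Extension Lemma in hand you can actually sidestep the global multipliers $\lambda_1,\dots,\lambda_m$ entirely: fix a pair $i<j$, take an arbitrary $f\in C_0^\infty(\Sigma_{ij})$ with $\int_{\Sigma_{ij}}f\,\gamma_{\adimn}=0$, set all other $f_{k\ell}=0$, extend via Lemma~\ref{lemma27}, and conclude from vanishing first variation that $\int_{\Sigma_{ij}}(T_\rho 1_{\Omega_i}-T_\rho 1_{\Omega_j})f\,\gamma_{\adimn}=0$ for all such $f$; this forces $T_\rho(1_{\Omega_i}-1_{\Omega_j})$ to be constant on $\Sigma_{ij}$ directly, including across connected components, since $f$ ranges over all mean-zero test functions on $\Sigma_{ij}$. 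Your route via global multipliers $\lambda_i$ is valid and makes the cross-component constancy manifest for a cosmetic reason, but it carries slightly more machinery than is needed.
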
%

\begin{theorem}[\embolden{General Second Variation Formula}, {\cite[Theorem 2.6]{chokski07}}; also {\cite[Theorem 1.10]{heilman15}}]\label{thm4}
Let $X\in C_{0}^{\infty}(\R^{\adimn},\R^{\adimn})$.  Let $\Omega\subset\R^{\adimn}$  be a measurable set such that $\partial\Omega$ is a locally finite union of $C^{\infty}$ manifolds.  Let $\{\Omega^{(s)}\}_{s\in(-1,1)}$ be the corresponding variation of $\Omega$.  Define $V$ as in \eqref{two9c}.  Then
\begin{flalign*}
&\frac{1}{2}\frac{\d^{2}}{\d s^{2}}\Big|_{s=0}\int_{\R^{\adimn}} \int_{\R^{\adimn}} 1_{\Omega^{(s)}}(y)G(x,y) 1_{\Omega^{(s)}}(x)\,\d x\d y\\
&\quad=\int_{\redA}\int_{\redA}G(x,y)\langle X(x),N(x)\rangle\langle X(y),N(y)\rangle \,\d x\d y
+\int_{\redA}\mathrm{div}(V(x,0)X(x))\langle X(x),N(x)\rangle \,\d x.
\end{flalign*}

\end{theorem}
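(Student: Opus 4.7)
\smallskip

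\noindent\textbf{Proof Proposal.} Write $F(s)\colonequals \int_{\R^{\adimn}}\int_{\R^{\adimn}} 1_{\Omega^{(s)}}(x) 1_{\Omega^{(s)}}(y) G(x,y)\,\d x\,\d y = \int_{\Omega^{(s)}} V(x,s)\,\d x$, using the definition \eqref{two9c} of $V$. The plan is to differentiate $F$ twice in $s$, in each step applying the first variation Lemma \ref{latelemma3} and the classical divergence theorem (both of which are valid on $\Omega$ thanks to the $C^{\infty}$ regularity granted to $\partial\Omega$). The Gaussian decay of $G(x,y)$ and the fact that $X\in C_{0}^{\infty}(\R^{\adimn},\R^{\adimn})$ are compactly supported will justify all exchanges of derivative and integral.

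First, I would compute $F'(s)$. Since the set $\Omega^{(s)}$ appears in both the outer integral and inside $V(x,s)$, applying Lemma \ref{latelemma3} to each occurrence yields
\[
F'(s) = \int_{\redb\Omega^{(s)}} V(x,s)\langle X(x),N^{(s)}(x)\rangle \,\d x + \int_{\Omega^{(s)}} \partial_{s} V(x,s)\,\d x.
\]
A second application of Lemma \ref{latelemma3} to $\partial_{s} V(x,s)$ gives $\partial_{s} V(x,s)=\int_{\redb\Omega^{(s)}} G(x,y)\langle X(y),N^{(s)}(y)\rangle\,\d y$; then Fubini and symmetry of $G$ merge the two contributions into
\[
F'(s)=2\int_{\redb\Omega^{(s)}} V(x,s)\langle X(x),N^{(s)}(x)\rangle \,\d x.
\]
Next I would invoke the divergence theorem on $\Omega^{(s)}$ for the vector field $x\mapsto V(x,s)X(x)$ (legitimate since both $V(\cdot,s)$ and $X$ are smooth and the latter is compactly supported) to rewrite this as a bulk integral:
\[
F'(s)=2\int_{\Omega^{(s)}} \mathrm{div}\bigl(V(\cdot,s)X\bigr)(x)\,\d x.
\]

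Now I differentiate once more. Applying Lemma \ref{latelemma3} to the moving domain and pulling $\partial_s$ inside the integrand (noting $X$ is $s$-independent so $\partial_{s}\mathrm{div}(V(\cdot,s)X)=\mathrm{div}(\partial_{s}V(\cdot,s)X)$) gives
\[
\tfrac{1}{2}F''(s)=\int_{\redb\Omega^{(s)}}\mathrm{div}(V(\cdot,s)X)(x)\langle X(x),N^{(s)}(x)\rangle\,\d x +\int_{\Omega^{(s)}}\mathrm{div}\bigl(\partial_{s}V(\cdot,s)X\bigr)(x)\,\d x.
\]
Another use of the divergence theorem converts the last term to $\int_{\redb\Omega^{(s)}} \partial_{s}V(x,s)\langle X(x),N^{(s)}(x)\rangle\,\d x$, and inserting the formula for $\partial_{s}V$ from the previous paragraph produces the double boundary integral. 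Evaluating at $s=0$ (where $N^{(0)}=N$ on $\Sigma=\redb\Omega$) yields exactly the stated identity.

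The main obstacles are bookkeeping rather than conceptual: (i) justifying that one may differentiate $V(x,s)$ in $s$ under the integral sign uniformly in $x$ on the support of $X$, which follows from $|G(x,\cdot)|$ being Schwartz together with compact support of $X$; and (ii) applying the divergence theorem on $\Omega^{(s)}$. The latter is where the regularity output of Lemma \ref{reglem} is used—$\partial\Omega$ is a locally finite union of $C^{\infty}$ submanifolds, and the lower-dimensional singular set $\partial\Omega\setminus\redb\Omega$ has Hausdorff codimension at least one in $\partial\Omega$, so it is negligible for the boundary integrals. Finally, one uses the flow property \eqref{nine2.3} and the continuity of $\Psi(\cdot,s)$ to guarantee that for small $|s|$ the deformed sets $\Omega^{(s)}$ retain the same regularity, so the above manipulations are valid on a neighborhood of $s=0$.
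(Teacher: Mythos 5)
Your argument is correct and amounts to the standard transport-theorem computation: write $F(s)=\int_{\Omega^{(s)}}V(x,s)\,\d x$, use the Reynolds transport theorem (of which Lemma \ref{latelemma3} is the $s=0$, fixed-integrand special case; the general-$s$ version follows from the group property of the flow in \eqref{nine2.3}) to get $F'(s)=2\int_{\redb\Omega^{(s)}}V\langle X,N^{(s)}\rangle$, rewrite this by the divergence theorem as a bulk integral, and differentiate once more. Evaluating at $s=0$ recovers exactly the claimed formula, including the factor $\tfrac{1}{2}$. The paper itself gives no proof and just cites Choksi--Sternberg and \cite{heilman15}, and your calculation reproduces what is essentially the same argument as those sources.

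One small correction in the supporting discussion: you invoke Lemma \ref{reglem} to justify the divergence theorem, but Lemma \ref{reglem} is a regularity statement for \emph{maximizers} of Problem \ref{prob2}, and Theorem \ref{thm4} is stated for an arbitrary $\Omega$ whose boundary is \emph{assumed} to be a locally finite union of $C^{\infty}$ manifolds. The regularity is a hypothesis of the theorem, not a consequence of Lemma \ref{reglem}, and it propagates to $\Omega^{(s)}$ because $\Psi(\cdot,s)$ is a $C^{\infty}$ diffeomorphism. Apart from that attribution slip, the proposal is sound.
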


\section{Noise Stability and the Calculus of Variations}\label{secnoise}

We now further refine the first and second variation formulas from the previous section.  The following formula follows by using $G(x,y)\colonequals\gamma_{\adimn}(x)\gamma_{\adimn}(y)$ $\forall$ $x,y\in\R^{\adimn}$ in Lemma \ref{latelemma3} and in Theorem \ref{thm4}.

\begin{lemma}[\embolden{Variations of Gaussian Volume}, {\cite{ledoux01}}]\label{lemma41}
Let $\Omega\subset\R^{\adimn}$  be a measurable set such that $\partial\Omega$ is a locally finite union of $C^{\infty}$ manifolds.  Let $X\in C_{0}^{\infty}(\R^{\adimn},\R^{\adimn})$.  Let $\{\Omega^{(s)}\}_{s\in(-1,1)}$ be the corresponding variation of $\Omega$.  Denote $f(x)\colonequals\langle X(x),N(x)\rangle$ for all $x\in\Sigma\colonequals \redb\Omega $.  Then
$$\frac{\d}{\d s}\Big|_{s=0}\gamma_{\adimn}(\Omega^{(s)})=\int_{\Sigma}f(x)\gamma_{\adimn}(x)\,\d x.$$
$$\frac{\d^{2}}{\d s^{2}}\Big|_{s=0}\gamma_{\adimn}(\Omega^{(s)})=\int_{\Sigma}(\mathrm{div}(X)-\langle X,x\rangle)f(x)\gamma_{\adimn}(x)\,\d x.$$
\end{lemma}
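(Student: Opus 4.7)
The plan is to apply Lemma \ref{latelemma3} and Theorem \ref{thm4} with the product kernel $G(x,y)\colonequals\gamma_{\adimn}(x)\gamma_{\adimn}(y)$, in which case the double integral $\int\int 1_{\Omega^{(s)}}(x)G(x,y)1_{\Omega^{(s)}}(y)\,\d x\d y$ factors as $\gamma_{\adimn}(\Omega^{(s)})^{2}$, turning first and second variations of noise stability into first and second variations of Gaussian volume squared. The key identity throughout is $\overline{\nabla}\gamma_{\adimn}(x)=-x\,\gamma_{\adimn}(x)$, which explains the $\langle X,x\rangle$ term that appears in the answer.

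For the first variation, I would take $G(x,y)=\gamma_{\adimn}(x)\gamma_{\adimn}(y)$ in Lemma \ref{latelemma3}. The left-hand side becomes $\gamma_{\adimn}(x)\frac{\d}{\d s}|_{s=0}\gamma_{\adimn}(\Omega^{(s)})$ and the right-hand side is $\gamma_{\adimn}(x)\int_{\Sigma}\gamma_{\adimn}(y)f(y)\,\d y$; dividing by $\gamma_{\adimn}(x)>0$ yields the claim. (Equivalently, one can use the change of variables $y=\Psi(x,s)$, differentiate $\int_{\Omega}\gamma_{\adimn}(\Psi(x,s))\det D\Psi(x,s)\,\d x$ at $s=0$ to get $\int_{\Omega}[\mathrm{div}(X)(x)-\langle x,X(x)\rangle]\gamma_{\adimn}(x)\,\d x=\int_{\Omega}\mathrm{div}(\gamma_{\adimn}(x)X(x))\,\d x$, then apply the divergence theorem.)

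For the second variation, I would apply Theorem \ref{thm4} with the same $G$. With this choice, the left-hand side is
\begin{equation*}
\frac{1}{2}\frac{\d^{2}}{\d s^{2}}\Big|_{s=0}\gamma_{\adimn}(\Omega^{(s)})^{2}
=\gamma_{\adimn}(\Omega)\frac{\d^{2}}{\d s^{2}}\Big|_{s=0}\gamma_{\adimn}(\Omega^{(s)})+\Big(\frac{\d}{\d s}\Big|_{s=0}\gamma_{\adimn}(\Omega^{(s)})\Big)^{2},
\end{equation*}
where by the first variation just proved the last squared derivative equals $(\int_{\Sigma}f\gamma_{\adimn})^{2}$. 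Next I would compute the right-hand side of Theorem \ref{thm4}: the double boundary integral factors as $(\int_{\Sigma}f\gamma_{\adimn})^{2}$ and so cancels with the square of the first variation on the left, while from \eqref{two9c} we have $V(x,0)=\gamma_{\adimn}(x)\gamma_{\adimn}(\Omega)$, so that
\begin{equation*}
\int_{\Sigma}\mathrm{div}(V(x,0)X(x))f(x)\,\d x=\gamma_{\adimn}(\Omega)\int_{\Sigma}\big[\mathrm{div}(X)(x)-\langle X(x),x\rangle\big]f(x)\gamma_{\adimn}(x)\,\d x,
\end{equation*}
using $\overline{\nabla}\gamma_{\adimn}=-x\gamma_{\adimn}$. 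Dividing by $\gamma_{\adimn}(\Omega)$ gives the second variation formula.

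There is no real obstacle here: the only mild subtlety is keeping track of the factor of $\gamma_{\adimn}(\Omega)$ that appears from squaring the volume and the cancellation of the squared first-variation terms. The computation goes through as long as $\gamma_{\adimn}(\Omega)\neq 0$; the degenerate case $\gamma_{\adimn}(\Omega)=0$ would have to be handled separately (either by replacing $\Omega$ with its complement, which has full measure, or by taking the direct change-of-variables approach for the second variation as well, differentiating $\int_{\Omega}\mathrm{div}(\gamma_{\adimn}X)(\Psi(x,s))\det D\Psi(x,s)\,\d x$ once more and simplifying via $\nabla\gamma_{\adimn}=-x\gamma_{\adimn}$).
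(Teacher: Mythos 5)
Your proposal is correct and matches the paper's intended approach exactly: the paper states that Lemma \ref{lemma41} follows by taking $G(x,y)=\gamma_{\adimn}(x)\gamma_{\adimn}(y)$ in Lemma \ref{latelemma3} and Theorem \ref{thm4}, which is precisely what you do, and your bookkeeping of the $\gamma_{\adimn}(\Omega)$ factor, the cancellation of the squared first-variation term, and the identity $\overline{\nabla}\gamma_{\adimn}=-x\gamma_{\adimn}$ is all correct. Your remark about the degenerate case $\gamma_{\adimn}(\Omega)=0$ is a sensible extra precaution.
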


\begin{lemma}[\embolden{Extension Lemma for Existence of Volume-Preserving Variations}, {\cite[Lemma 3.9]{heilman18}}]\label{lemma27}
Let $X'\in C_{0}^{\infty}(\R^{\adimn},\R^{\adimn})$ be a vector field.  Define $f_{ij}\colonequals\langle X',N_{ij}\rangle\in C_{0}^{\infty}(\Sigma_{ij})$ for all $1\leq i<j\leq m$.  If
\begin{equation}\label{eight2}
\forall\,1\leq i\leq m,\quad \sum_{j\in\{1,\ldots,m\}\setminus\{i\}}\int_{\Sigma_{ij}}f_{ij}(x)\gamma_{\sdimn}(x)\,\d x=0,
\end{equation}
then $X'|_{\cup_{1\leq i<j\leq m}\Sigma_{ij}}$ can be extended to a vector field $X\in C_{0}^{\infty}(\R^{\adimn},\R^{\adimn})$ such that the corresponding variations $\{\Omega_{i}^{(s)}\}_{1\leq i\leq m,s\in(-1,1)}$ satisfy
$$\forall\,1\leq i\leq m,\quad\forall\,s\in(-1,1),\quad \gamma_{\adimn}(\Omega_{i}^{(s)})=\gamma_{\adimn}(\Omega_{i}).$$
\end{lemma}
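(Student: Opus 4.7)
Hypothesis \eqref{eight2} says that the first derivative of each Gaussian volume $\gamma_{\adimn}(\Omega_{i})$ vanishes at $s=0$ along the flow of $X'$ (by Lemma \ref{lemma41}). The task is to correct that flow so all volumes are preserved for every $s\in(-1,1)$, without altering $X'$ on the boundary $\cup_{i<j}\Sigma_{ij}$. The natural tool is the implicit function theorem applied to a multi-parameter family of diffeomorphisms built from $X'$ and $m-1$ auxiliary compactly supported vector fields that act on the volumes essentially independently.

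\textbf{Auxiliary fields.} Relabel the sets (using connectedness of the adjacency graph of $\{\Omega_{1},\ldots,\Omega_{m}\}$, which follows from connectedness of $\R^{\adimn}$) so that for each $k\in\{1,\ldots,m-1\}$ there exists $j_{k}>k$ with $\Sigma_{k,j_{k}}\ne\emptyset$. Pick a regular point $p_{k}\in\Sigma_{k,j_{k}}$ and a small ball $B_{k}$ around $p_{k}$ so that the $B_{k}$ are pairwise disjoint, each $B_{k}$ meets only $\Sigma_{k,j_{k}}$ among all $\Sigma_{ij}$, and each is disjoint from $(\mathrm{supp}\,X')\cap(\cup_{i<j}\Sigma_{ij})$. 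Choose a nonnegative bump $\phi_{k}\in C_{0}^{\infty}(B_{k})$ with $c_{k}\colonequals\int_{\Sigma_{k,j_{k}}}\phi_{k}\gamma_{\sdimn}>0$, and extend $\phi_{k}N_{k,j_{k}}|_{\Sigma_{k,j_{k}}}$ to a compactly supported $Y_{k}\in C_{0}^{\infty}(\R^{\adimn},\R^{\adimn})$ with support in $B_{k}$.

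\textbf{Implicit function theorem and conclusion.} Let $\Phi^{V}_{s}$ denote the time-$s$ flow of a vector field $V$, and define $\Psi_{s,t}\colonequals\Phi^{X'}_{s}\circ\Phi^{Y_{1}}_{t_{1}}\circ\cdots\circ\Phi^{Y_{m-1}}_{t_{m-1}}$ for $(s,t)$ near the origin in $\R\times\R^{m-1}$. Let $\Omega_{i}^{(s,t)}\colonequals\Psi_{s,t}(\Omega_{i})$ and put $F_{i}(s,t)\colonequals\gamma_{\adimn}(\Omega_{i}^{(s,t)})-\gamma_{\adimn}(\Omega_{i})$ for $i=1,\ldots,m-1$. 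By Lemma \ref{lemma41} and the choice of $Y_{k}$, the Jacobian $(\partial F_{i}/\partial t_{k})|_{(0,0)}$ is lower triangular with diagonal entries $c_{k}\ne 0$, hence invertible. The implicit function theorem produces smooth $t_{k}(s)$ with $t_{k}(0)=0$ and $F(s,t(s))\equiv 0$; together with $\sum_{i=1}^{m}\gamma_{\adimn}(\Omega_{i}^{(s,t(s))})=1$ this forces $\gamma_{\adimn}(\Omega_{i}^{(s,t(s))})=\gamma_{\adimn}(\Omega_{i})$ for every $i$ and $s$. Differentiating $F(s,t(s))\equiv 0$ at $s=0$, using $(\partial F_{i}/\partial s)|_{(0,0)}=\sum_{j\ne i}\int_{\Sigma_{ij}}f_{ij}\gamma_{\sdimn}=0$ (exactly hypothesis \eqref{eight2}) and invertibility of the $t$-Jacobian, yields $t_{k}'(0)=0$ for all $k$. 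Hence the initial velocity of the family $\Omega_{i}^{(s)}\colonequals\Psi_{s,t(s)}(\Omega_{i})$ is $X\colonequals X'+\sum_{k}t_{k}'(0)Y_{k}=X'$, and in particular $X|_{\cup_{i<j}\Sigma_{ij}}=X'|_{\cup_{i<j}\Sigma_{ij}}$ as required.

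\textbf{Main obstacle.} The chief technical issue is the combinatorial relabeling step: one must show that the adjacency graph of the partition admits an ordering $(\Omega_{1},\ldots,\Omega_{m})$ in which every vertex $\Omega_{k}$ with $k<m$ has a neighbor $\Omega_{j_{k}}$ with $j_{k}>k$, which is what makes the Jacobian triangular and nonsingular. This follows by picking any spanning tree of the (connected) adjacency graph, rooting it at $\Omega_{m}$, and using the reverse BFS order. A secondary concern is that the family $\Omega_{i}^{(s)}$ so produced is realized as a composition of flows rather than the flow of a single time-independent vector field, but the first and second variation formulas used throughout the paper (Lemmas \ref{latelemma3}, \ref{lemma41} and Theorem \ref{thm4}) depend only on the initial velocity at $s=0$, so this does not affect subsequent applications.
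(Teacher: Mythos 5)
The paper gives no proof of this lemma; it is quoted from \cite[Lemma 3.9]{heilman18}. Your proof follows the standard implicit-function-theorem construction of volume-preserving variations (in the spirit of Barbosa--do Carmo and its Gaussian analogues), which is almost certainly the same route as the cited reference: build auxiliary bump fields $Y_{k}$ each localized on a single interface $\Sigma_{k,j_{k}}$, order the parts so the Jacobian of the constraint map is triangular, and invoke the implicit function theorem. The spanning-tree/reverse-BFS ordering you use to make the $(m-1)\times(m-1)$ matrix $(\partial F_{i}/\partial t_{k})$ lower triangular with nonzero diagonal $\pm c_{k}$ is a clean way to dispose of the invertibility, and your derivation of $t_{k}'(0)=0$ from \eqref{eight2} correctly identifies the initial velocity as exactly $X'$.

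The one genuine discrepancy, which you already flag, deserves a sharper formulation. The lemma's conclusion literally asks for a vector field $X$ whose \emph{flow} (the single time-independent ODE \eqref{nine2.3}) produces sets with constant Gaussian volume; your family $\Omega_{i}^{(s)}=\Psi_{s,t(s)}(\Omega_{i})$ is generated by a \emph{time-dependent} velocity $\frac{\d}{\d s}\Psi_{s,t(s)}\circ\Psi_{s,t(s)}^{-1}$, not by a single $X\in C_{0}^{\infty}(\R^{\adimn};\R^{\adimn})$, so as stated the lemma is not literally proved. Your escape hatch -- that the variation formulas depend only on the initial velocity -- is not true in full generality: the $\mathrm{div}(V(x,0)X(x))$ term in Theorem \ref{thm4} depends on the first-order behavior of $X$ off $\Sigma$, and the second derivative of a general one-parameter family involves its second-order Taylor coefficient as well, unless one is at a constrained critical point (as in Lemmas \ref{lemma7p}, \ref{lemma7r}, and the hyperstability definitions where the first variation is assumed to vanish). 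You should say so explicitly: either restrict the conclusion to ``a one-parameter family of sets with initial velocity $X'|_{\Sigma}$ preserving volumes,'' which is all the paper uses, or remark that at the critical points where the lemma is applied the second variation depends only on the initial velocity, so the composition-of-flows family may be substituted. Two smaller points: (i) if some $\Omega_{i}$ is empty the adjacency-graph argument should be restricted to the nonempty parts, with the empty ones handled trivially; (ii) the disjointness of $B_{k}$ from $\mathrm{supp}\,X'\cap(\cup\Sigma_{ij})$ is unnecessary once you have $t'(0)=0$, and in fact cannot always be arranged (e.g.\ if $\mathrm{supp}\,X'$ covers $\Sigma_{k,j_{k}}$), so it is cleaner to drop that requirement.
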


\begin{lemma}\label{gpsd}
Define $G$ as in \eqref{gdef}.  Let $f\colon\Sigma\to\R$ be continous and compactly supported.  Then
$$
\int_{\redA}\int_{\redA}G(x,y)f(x)f(y) \,\d x\d y\geq0.
$$
\end{lemma}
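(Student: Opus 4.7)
The plan is to exploit the semigroup identity $T_{\rho_1}T_{\rho_2} = T_{\rho_1\rho_2}$ (noted just after equation~\eqref{oudef}) to express $G$ as a ``self-convolution'' of a lower-correlation kernel against Gaussian weight, so that the double integral becomes a manifestly non-negative integral of squares. Throughout I will assume $\rho\in(0,1)$; the case $\rho=0$ is immediate since then $G(x,y)=\gamma_{\adimn}(x)\gamma_{\adimn}(y)$ and the integral reduces to $\bigl(\int_{\redA} f\gamma_{\adimn}\bigr)^{2}\ge 0$.

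First, comparing \eqref{oudef} with \eqref{gdef} gives the identity
$$T_\rho g(x) \;=\; \gamma_{\adimn}(x)^{-1}\int_{\R^{\adimn}} G(x,y)\,g(y)\,\d y,$$
so that writing $T_{\rho}=T_{\sqrt\rho}T_{\sqrt\rho}$ at the kernel level yields the Chapman--Kolmogorov identity
$$G_{\rho}(x,y) \;=\; \int_{\R^{\adimn}} \frac{G_{\sqrt{\rho}}(x,z)\,G_{\sqrt{\rho}}(z,y)}{\gamma_{\adimn}(z)}\,\d z.$$
I would verify this identity directly from the explicit Gaussian form of $G$ in \eqref{gdef} (completing the square in $z$), which serves as an independent confirmation that does not rely on first taking $T_\rho$ through the kernel formula.

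Next, I substitute this representation into $\int_{\redA}\int_{\redA}G(x,y)f(x)f(y)\,\d x\,\d y$ and interchange the order of integration by Fubini. The interchange is justified because $f$ is continuous with compact support $K\subset\redA$, so the absolute value of the integrand is dominated by $\|f\|_\infty^{2}\,G_{\sqrt\rho}(x,z)G_{\sqrt\rho}(z,y)/\gamma_{\adimn}(z)\cdot\mathbf{1}_{K}(x)\mathbf{1}_{K}(y)$, whose triple integral collapses by Chapman--Kolmogorov to $\|f\|_{\infty}^{2}\int_{K}\int_{K}G(x,y)\,\d x\,\d y<\infty$. Using the symmetry $G_{\sqrt\rho}(x,z)=G_{\sqrt\rho}(z,x)$ evident from \eqref{gdef}, the resulting triple integral factors as
$$\int_{\R^{\adimn}}\frac{1}{\gamma_{\adimn}(z)}\biggl(\int_{\redA} G_{\sqrt\rho}(z,x)\,f(x)\,\d x\biggr)^{\!2}\,\d z \;\ge\; 0,$$
which is the claim.

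The main ``obstacle'' is really only the two routine verifications above (the Chapman--Kolmogorov identity and the Fubini interchange); both are unobstructed by the hypotheses on $f$. A conceptually equivalent route uses Mehler's formula to expand $G(x,y)=\gamma_{\adimn}(x)\gamma_{\adimn}(y)\sum_{\alpha}\rho^{|\alpha|}h_\alpha(x)h_\alpha(y)$ in a basis $\{h_\alpha\}$ of multivariate Hermite polynomials orthonormal with respect to $\gamma_{\adimn}$, reducing the double integral to $\sum_{\alpha}\rho^{|\alpha|}\bigl(\int_{\redA}h_\alpha f\gamma_{\adimn}\bigr)^{2}\ge 0$; this also proves the lemma but requires dominated convergence to exchange sum and integral, so I would favor the semigroup route, which avoids any spectral theory.
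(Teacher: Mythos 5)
Your proof is correct. The paper states Lemma~\ref{gpsd} without any proof, treating the positive semi-definiteness of the Mehler kernel as a standard fact, so there is no official argument to compare against. Your route via $T_\rho = T_{\sqrt\rho}T_{\sqrt\rho}$ (equivalently, Chapman--Kolmogorov for $G$) is one of the two standard ways to see it and is both clean and fully rigorous here: the Chapman--Kolmogorov identity does indeed follow by completing the square in $z$, the symmetry $G_{\sqrt\rho}(x,z)=G_{\sqrt\rho}(z,x)$ is immediate from the first line of~\eqref{gdef}, and your Fubini justification (domination by $\|f\|_\infty^2\mathbf{1}_K(x)\mathbf{1}_K(y)$ times the kernel, which collapses by Chapman--Kolmogorov to a finite integral over the compact set $K$ with its finite $n$-dimensional Hausdorff measure) is exactly what is needed. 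One small remark worth making explicit: the lemma as stated requires $\rho\geq0$ (for $\rho<0$ the Mehler kernel is genuinely not positive semi-definite, as your Hermite expansion alternative makes clear, since odd-degree terms carry negative coefficients); you do restrict to $\rho\in[0,1)$, which is consistent with the paper's use of the lemma in the setting of Problem~\ref{prob2}, but the restriction deserves to be flagged since $G$ is defined in~\eqref{gdef} for all $\rho\in(-1,1)$. Your closing observation that the Mehler/Hermite expansion gives a second proof is accurate, and your preference for the semigroup route is reasonable since it avoids having to discuss convergence of the Hermite series on the surface $\Sigma$.
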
%

%
%
\subsection{Two Sets}\label{twossub}
\begin{lemma}[\embolden{Volume Preserving Second Variation of Maximizers}]\label{lemma7p}
Suppose $\Omega,\Omega^{c}\subset\R^{\adimn}$ maximize Problem \ref{prob2} for $0<\rho<1$ and $m=2$.  Let $\{\Omega^{(s)}\}_{s\in(-1,1)}$ be the corresponding variation of $\Omega$.  Denote $f(x)\colonequals\langle X(x),N(x)\rangle$ for all $x\in\Sigma\colonequals \redb\Omega$.  If
$$\int_{\Sigma}f(x)\gamma_{\adimn}(x)\,\d x=0,$$
Then there exists an extension of the vector field $X|_{\Sigma}$ such that the corresponding variation of $\{\Omega^{(s)}\}_{s\in(-1,1)}$ satisfies
\begin{equation}\label{four32p}
\begin{aligned}
&\frac{1}{2}\frac{\d^{2}}{\d s^{2}}\Big|_{s=0}\int_{\R^{\adimn}}\int_{\R^{\adimn}} 1_{\Omega^{(s)}}(y)G(x,y) 1_{\Omega^{(s)}}(x)\,\d x\d y\\
&\qquad\qquad=\int_{\redA}\int_{\redA}G(x,y)f(x)f(y) \,\d x\d y
-\int_{\redA}\vnorm{\overline{\nabla} T_{\rho}1_{\Omega}(x)}(f(x))^{2} \gamma_{\adimn}(x)\,\d x.
\end{aligned}
\end{equation}
Moreover,
\begin{equation}\label{nabeq2}
\overline{\nabla}T_{\rho}1_{\Omega}(x)=-N(x)\vnorm{\overline{\nabla}T_{\rho}1_{\Omega}(x)},\qquad\forall\,x\in\Sigma.
\end{equation}
\end{lemma}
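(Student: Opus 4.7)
The plan is to specialize the general second variation from Theorem \ref{thm4} to the maximizer $\Omega$, apply Lemma \ref{firstvarmaxns} to simplify the boundary integral, use Lemma \ref{lemma27} to make the variation Gaussian-volume-preserving (thereby killing a volume-second-derivative contribution via Lemma \ref{lemma41}), and finally fix the sign in \eqref{nabeq2} by combining maximality with the positive semi-definiteness of Lemma \ref{gpsd}.

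Concretely, using the last line of \eqref{gdef}, I compute $V(x,0)=\int_{\Omega}G(x,y)\,\d y = \gamma_{\adimn}(x)T_{\rho}1_{\Omega}(x)$, and $\overline{\nabla}\gamma_{\adimn}(x)=-x\gamma_{\adimn}(x)$ then gives
\begin{equation*}
\mathrm{div}(V(x,0)X(x)) = T_{\rho}1_{\Omega}(x)\gamma_{\adimn}(x)\bigl(\mathrm{div}\,X-\langle x,X\rangle\bigr) + \gamma_{\adimn}(x)\langle\overline{\nabla} T_{\rho}1_{\Omega}(x),X(x)\rangle.
\end{equation*}
Because $\Omega,\Omega^{c}$ maximize Problem \ref{prob2}, Lemma \ref{firstvarmaxns} (applied with $i=1$, $j=2$, and using $T_{\rho}(1_{\Omega}-1_{\Omega^{c}})=2T_{\rho}1_{\Omega}-1$) gives $T_{\rho}1_{\Omega}\equiv(c+1)/2$ on $\Sigma$ for some constant $c$. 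In particular $\overline{\nabla} T_{\rho}1_{\Omega}$ is normal to $\Sigma$, so \eqref{zero11} forces $\overline{\nabla}T_{\rho}1_{\Omega}(x)=\eta(x)\vnorm{\overline{\nabla}T_{\rho}1_{\Omega}(x)}N(x)$ for a locally constant sign $\eta(x)\in\{-1,+1\}$. Under the hypothesis $\int_{\Sigma}f\gamma_{\adimn}=0$, Lemma \ref{lemma27} (with $m=2$) furnishes an extension of $X|_{\Sigma}$ whose induced variation satisfies $\gamma_{\adimn}(\Omega^{(s)})=\gamma_{\adimn}(\Omega)$ for all $s$; by Lemma \ref{lemma41} this forces $\int_{\Sigma}(\mathrm{div}\,X-\langle x,X\rangle)f\gamma_{\adimn}\,\d x=0$. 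Substituting into Theorem \ref{thm4} then yields
\begin{equation*}
\frac{1}{2}\frac{\d^{2}}{\d s^{2}}\Big|_{s=0}\iint 1_{\Omega^{(s)}}(y)G(x,y)1_{\Omega^{(s)}}(x)\,\d x\,\d y = \iint_{\Sigma\times\Sigma} G(x,y)f(x)f(y)\,\d x\,\d y + \int_{\Sigma}\eta(x)\vnorm{\overline{\nabla}T_{\rho}1_{\Omega}(x)}f(x)^{2}\gamma_{\adimn}(x)\,\d x.
\end{equation*}

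It remains to pin down $\eta$. Since $\sum_{i=1}^{2}\iint 1_{\Omega_{i}}G\,1_{\Omega_{i}} = 2\iint 1_{\Omega}G\,1_{\Omega}-2\gamma_{\adimn}(\Omega)+\iint G$, maximizing the partition stability at $m=2$ with $\gamma_{\adimn}(\Omega)=1/2$ is equivalent to maximizing $\iint 1_{\Omega}G\,1_{\Omega}$ at fixed Gaussian volume, so the display above must be $\leq 0$ for every admissible $f$. Suppose toward contradiction $\eta(x_{0})=+1$ for some $x_{0}\in\Sigma$. Lemma \ref{reglem} furnishes a neighborhood of $x_{0}$ on which $\Sigma$ is a smooth manifold and $\eta\equiv+1$, and on this neighborhood I construct $f\in C_{0}^{\infty}(\Sigma)$ as a difference of two disjoint nonnegative bumps so that $\int_{\Sigma}f\gamma_{\adimn}=0$ and $f\not\equiv 0$. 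For this $f$, Lemma \ref{gpsd} makes the first term $\geq 0$ while the second term is strictly positive (using $\vnorm{\overline{\nabla}T_{\rho}1_{\Omega}}>0$ on $\Sigma$, as otherwise \eqref{zero11} fails to define $N$), contradicting maximality. Hence $\eta\equiv -1$ on $\Sigma$, which is exactly \eqref{nabeq2}, and substituting $\eta=-1$ into the display above gives \eqref{four32p}. The main obstacle is this final sign argument: one must carefully build the test function $f$ of zero Gaussian mean supported in $\{\eta=+1\}$, which relies both on the local $C^{\infty}$ regularity of $\Sigma$ from Lemma \ref{reglem} and on the strict positivity of $\vnorm{\overline{\nabla}T_{\rho}1_{\Omega}}$ on $\Sigma$ to conclude.
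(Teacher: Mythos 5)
Your proof follows the route that the surrounding lemmas are designed for, and the structure is sound: compute $V(x,0)=\gamma_{\adimn}(x)T_{\rho}1_{\Omega}(x)$ from \eqref{two9c} and \eqref{gdef}, expand $\mathrm{div}(V(x,0)X(x))$, use Lemma \ref{firstvarmaxns} to make $T_{\rho}1_{\Omega}$ constant on $\Sigma$ (so its gradient is normal to $\Sigma$), invoke Lemma \ref{lemma27} to choose an extension with $\gamma_{\adimn}(\Omega^{(s)})\equiv\gamma_{\adimn}(\Omega)$ so that Lemma \ref{lemma41} kills the $\int_{\Sigma}(\mathrm{div}X-\langle x,X\rangle)f\gamma_{\adimn}$ term, substitute into Theorem \ref{thm4}, and then determine the sign via maximality plus the positive-semi-definiteness from Lemma \ref{gpsd}. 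Your algebraic identity relating $\sum_{i=1}^{2}\iint 1_{\Omega_{i}}G1_{\Omega_{i}}$ to $\iint 1_{\Omega}G1_{\Omega}$ is correct and is the right way to translate ``maximizer of the partition functional'' into ``second variation $\leq 0$'' for the single-set quantity.

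The one place where the argument is thinner than it should be is the claim ``$\vnorm{\overline{\nabla}T_{\rho}1_{\Omega}}>0$ on $\Sigma$, as otherwise \eqref{zero11} fails to define $N$.'' That is not really a proof: \eqref{zero11} is itself an assertion in the paper that presupposes nonvanishing, and the honest statement in this setting (see the analogous last sentence of Lemma \ref{lemma7r} and the proof sketch after Lemma \ref{lemma7rn}) is only that $\vnorm{\overline{\nabla}T_{\rho}1_{\Omega}}>0$ on $\Sigma$ off a set of Hausdorff dimension at most $\sdimn-1$. Your contradiction argument survives this weakening --- since the exceptional set is lower-dimensional, any nonempty relatively open patch of $\Sigma$ where $\eta\equiv+1$ contains points with nonzero gradient, and the bump construction still makes the second term strictly positive --- but you should say this rather than lean on \eqref{zero11}. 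Alternatively, you can prove the a.e.\ nonvanishing directly by the same second-variation trick (if $\vnorm{\overline{\nabla}T_{\rho}1_{\Omega}}\equiv 0$ on an open patch, a mean-zero bump difference supported there makes the second term vanish while $\iint Gff>0$ by strict positive-definiteness of the Mehler kernel, contradicting maximality), which also makes the sign determination self-contained. With that repair the argument is complete.
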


\subsection{More than Two Sets}

We can now generalize Section \ref{twossub} to the case of $m>2$ sets.

\begin{lemma}[\embolden{Second Variation of Noise Stability, Multiple Sets}]\label{lemma6v2}
Let $\Omega_{1},\ldots,\Omega_{m}\subset\R^{\adimn}$ be a partition of $\R^{\adimn}$ into measurable sets such that $\partial\Omega_{i}$ is a locally finite union of $C^{\infty}$ manifolds for all $1\leq i\leq m$.  Let $X\in C_{0}^{\infty}(\R^{\adimn},\R^{\adimn})$.  Let $\{\Omega_{i}^{(s)}\}_{s\in(-1,1)}$ be the corresponding variation of $\Omega_{i}$ for all $1\leq i\leq m$.  Denote $f_{ij}(x)\colonequals\langle X(x),N_{ij}(x)\rangle$ for all $x\in\Sigma_{ij}\colonequals (\redb\Omega_{i})\cap(\redb\Omega_{j})$.  We let $N$ denote the exterior pointing unit normal vector to $\redb\Omega_{i}$ for any $1\leq i\leq m$.  Then
\begin{equation}\label{four30v2}
\begin{aligned}
&\frac{1}{2}\frac{\d^{2}}{\d s^{2}}\Big|_{s=0}\sum_{i=1}^{m}\int_{\R^{\adimn}} \int_{\R^{\adimn}} 1_{\Omega_{i}^{(s)}}(y)G(x,y) 1_{\Omega_{i}^{(s)}}(x)\,\d x\d y\\
&\qquad=\sum_{1\leq i<j\leq m}\int_{\Sigma_{ij}}\Big[\Big(\int_{\redb\Omega_{i}}-\int_{\redb\Omega_{j}}\Big)G(x,y)\langle X(y),N(y)\rangle \,\d y\Big] f_{ij}(x) \,\d x\\
&\qquad\qquad+\int_{\Sigma_{ij}}\langle\overline{\nabla} T_{\rho}(1_{\Omega_{i}}-1_{\Omega_{j}})(x),X(x)\rangle f_{ij}(x) \gamma_{\adimn}(x)\,\d x\\
&\qquad\qquad+\int_{\Sigma_{ij}} T_{\rho}(1_{\Omega_{i}}-1_{\Omega_{j}})(x)\Big(\mathrm{div}(X(x))-\langle X(x),x\rangle\Big)f_{ij}(x)\gamma_{\adimn}(x)\,\d x.
\end{aligned}
\end{equation}
\end{lemma}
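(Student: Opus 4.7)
\textbf{Proof Proposal for Lemma \ref{lemma6v2}.} The plan is to apply the single-set second variation formula from Theorem \ref{thm4} to each of the sets $\Omega_1,\ldots,\Omega_m$, sum the results, and then reorganize the sum as an integration over the pairwise interfaces $\Sigma_{ij}$. First I would apply Theorem \ref{thm4} to each $\Omega_i$ with the given vector field $X$, yielding
\[
\tfrac{1}{2}\tfrac{\d^{2}}{\d s^{2}}\big|_{s=0}\!\!\int\!\!\int 1_{\Omega_i^{(s)}}(y)G(x,y) 1_{\Omega_i^{(s)}}(x)\,\d x\d y
=\int_{\redb\Omega_i}\!\int_{\redb\Omega_i}\! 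G\,\langle X,N\rangle\langle X,N\rangle + \int_{\redb\Omega_i}\!\mathrm{div}(V_i(x,0)X(x))\,\langle X,N\rangle\,\d x,
\]
where $V_i(x,0)=\int_{\Omega_i}G(x,y)\,\d y$ and $N$ denotes the exterior pointing unit normal to $\Omega_i$. By Lemma \ref{reglem}, $\redb\Omega_i$ equals the disjoint union $\bigcup_{j\neq i}\Sigma_{ij}$ up to $\sdimn$-dimensional Hausdorff measure zero, and on $\Sigma_{ij}$ the exterior normal to $\Omega_i$ is $N_{ij}$, so $\langle X,N\rangle=f_{ij}$ with the convention $f_{ji}=-f_{ij}$.

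Next, I would reorganize the double summation by pairing the contributions of $\Omega_i$ and $\Omega_j$ along their common interface $\Sigma_{ij}$. For the boundary-boundary piece, the outer integral over $\redb\Omega_i$ decomposes as $\sum_{j\neq i}\int_{\Sigma_{ij}}$; grouping the terms contributed by $i$ (with outer factor $f_{ij}$) with those contributed by $j$ (which have outer factor $f_{ji}=-f_{ij}$ on the same surface $\Sigma_{ij}$) produces, for each unordered pair $\{i,j\}$,
\[
\int_{\Sigma_{ij}} f_{ij}(x)\Big[\int_{\redb\Omega_i}G(x,y)\langle X(y),N(y)\rangle\,\d y - \int_{\redb\Omega_j}G(x,y)\langle X(y),N(y)\rangle\,\d y\Big]\d x,
\]
which is exactly the first term of the right-hand side of \eqref{four30v2}.

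For the divergence piece, I would use $G(x,y)=\gamma_{\adimn}(x)\gamma_{\adimn}(y)(1-\rho^{2})^{-(\adimn)/2}e^{(-\rho^{2}(\|x\|^{2}+\|y\|^{2})+2\rho\langle x,y\rangle)/(2(1-\rho^{2}))}$ from \eqref{gdef} to identify $V_i(x,0)=\gamma_{\adimn}(x)T_{\rho}1_{\Omega_i}(x)$. Pairing the $i$- and $j$-contributions on $\Sigma_{ij}$ as above gives $\int_{\Sigma_{ij}}\mathrm{div}\bigl((V_i-V_j)X\bigr)f_{ij}\,\d x$, and the product rule combined with $\overline{\nabla}\gamma_{\adimn}(x)=-x\gamma_{\adimn}(x)$ yields
\[
\mathrm{div}\bigl((V_i-V_j)X\bigr)=\gamma_{\adimn}\Big[\langle \overline{\nabla}T_{\rho}(1_{\Omega_i}-1_{\Omega_j}),X\rangle + T_{\rho}(1_{\Omega_i}-1_{\Omega_j})\bigl(\mathrm{div}(X)-\langle X,x\rangle\bigr)\Big],
\]
producing the remaining two terms of \eqref{four30v2}.

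The main obstacle is purely bookkeeping: keeping the orientation convention $N_{ij}=-N_{ji}$ consistent when collapsing $\sum_i\sum_{j\neq i}$ into $\sum_{i<j}$, and checking that the higher-order intersections $\Sigma_{ij}\cap \Sigma_{ik}$ for distinct $j,k$ have Hausdorff dimension at most $\sdimn-1$ (so they contribute nothing to the $\sdimn$-dimensional surface integrals) — this is guaranteed by Lemma \ref{reglem} together with the discussion following \eqref{zero11}. No new analytic estimates are needed; the lemma follows from combining Theorem \ref{thm4} with this algebraic reorganization.
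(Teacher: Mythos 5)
Your proof is correct and follows what is essentially the only natural route: apply Theorem~\ref{thm4} to each $\Omega_i$, sum over $i$, decompose each $\redb\Omega_i$ into $\bigcup_{j\neq i}\Sigma_{ij}$ (using Lemma~\ref{reglem} to discard the lower-dimensional intersections), and regroup the ordered sum $\sum_i\sum_{j\neq i}$ into a sum over unordered pairs by exploiting $f_{ji}=-f_{ij}$. The computation $V_i(x,0)=\gamma_{\adimn}(x)T_\rho 1_{\Omega_i}(x)$ from \eqref{gdef} and the product-rule expansion of $\mathrm{div}((V_i-V_j)X)$ using $\overline{\nabla}\gamma_{\adimn}=-x\gamma_{\adimn}$ are both right and produce exactly the second and third terms of \eqref{four30v2}. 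This matches the paper's derivation.
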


\begin{lemma}[\embolden{Volume Preserving Second Variation of Maximizers, Multiple Sets}]\label{lemma7r}
Let $\Omega_{1},\ldots,\Omega_{m}\subset\R^{\adimn}$ be a partition of $\R^{\adimn}$ into measurable sets such that $\partial\Omega_{i}$ is a locally finite union of $C^{\infty}$ manifolds for all $1\leq i\leq m$.  Let $X\in C_{0}^{\infty}(\R^{\adimn},\R^{\adimn})$.  Let $\{\Omega_{i}^{(s)}\}_{s\in(-1,1)}$ be the corresponding variation of $\Omega_{i}$ for all $1\leq i\leq m$.  Denote $f_{ij}(x)\colonequals\langle X(x),N_{ij}(x)\rangle$ for all $x\in\Sigma_{ij}\colonequals (\redb\Omega_{i})\cap(\redb\Omega_{j}) $.  We let $N$ denote the exterior pointing unit normal vector to $\redb\Omega_{i}$ for any $1\leq i\leq m$.  Then
\begin{equation}\label{four32pv2}
\begin{aligned}
&\frac{1}{2}\frac{\d^{2}}{\d s^{2}}\Big|_{s=0}\sum_{i=1}^{m}\int_{\R^{\adimn}} \int_{\R^{\adimn}} 1_{\Omega_{i}^{(s)}}(y)G(x,y) 1_{\Omega_{i}^{(s)}}(x)\,\d x\d y\\
&\qquad\qquad\qquad=\sum_{1\leq i<j\leq m}\int_{\Sigma_{ij}}\Big[\Big(\int_{\redb\Omega_{i}}-\int_{\redb\Omega_{j}}\Big)G(x,y)\langle X(y),N(y)\rangle \,\d y\Big] f_{ij}(x) \,\d x\\
&\qquad\qquad\qquad\qquad-\int_{\Sigma_{ij}}\vnorm{\overline{\nabla} T_{\rho}(1_{\Omega_{i}}-1_{\Omega_{j}})(x)}(f_{ij}(x))^{2} \gamma_{\adimn}(x)\,\d x.
\end{aligned}
\end{equation}
Also,
\begin{equation}\label{nabeq3}
\overline{\nabla}T_{\rho}(1_{\Omega_{i}}-1_{\Omega_{j}})(x)=-N_{ij}(x)\vnorm{\overline{\nabla}T_{\rho}(1_{\Omega_{i}}-1_{\Omega_{j}})(x)},\qquad\forall\,x\in\Sigma_{ij}.
\end{equation}
Moreover, $\vnorm{\overline{\nabla} T_{\rho}(1_{\Omega_{i}}-1_{\Omega_{j}})(x)}>0$ for all $x\in\Sigma_{ij}$, except on a set of Hausdorff dimension at most $\sdimn-1$.
\end{lemma}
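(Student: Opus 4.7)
The plan is to specialize the general formula of Lemma \ref{lemma6v2} to the case where $\Omega_{1},\ldots,\Omega_{m}$ is a maximizer of Problem \ref{prob2} and the variation preserves each Gaussian volume $\gamma_{\adimn}(\Omega_{i})$ (hypotheses suggested by the lemma's title, though not explicit in the statement). By Lemma \ref{firstvarmaxns}, $T_{\rho}(1_{\Omega_{i}}-1_{\Omega_{j}})(x)=c_{ij}$ on $\Sigma_{ij}$; writing $T_{\rho}(1_{\Omega_{i}}-1_{\Omega_{j}})=T_{\rho}1_{\Omega_{i}}-T_{\rho}1_{\Omega_{j}}$ produces constants $\lambda_{i}\in\R$ (the Lagrange multipliers for the volume constraints) with $c_{ij}=\lambda_{i}-\lambda_{j}$. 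The constancy of $T_{\rho}(1_{\Omega_{i}}-1_{\Omega_{j}})$ along $\Sigma_{ij}$ immediately forces its gradient to be normal to $\Sigma_{ij}$ at each regular point, and combined with \eqref{zero11} this yields \eqref{nabeq3} up to a sign $\epsilon_{ij}\in\{-1,+1\}$.

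Substituting into \eqref{four30v2}, the middle term becomes $\epsilon_{ij}\int_{\Sigma_{ij}}\vnorm{\overline{\nabla}T_{\rho}(1_{\Omega_{i}}-1_{\Omega_{j}})}f_{ij}^{2}\gamma_{\adimn}$, since $\langle\overline{\nabla}T_{\rho}(1_{\Omega_{i}}-1_{\Omega_{j}}),X\rangle=\epsilon_{ij}\vnorm{\overline{\nabla}T_{\rho}(1_{\Omega_{i}}-1_{\Omega_{j}})}f_{ij}$ on $\Sigma_{ij}$. The third term, after inserting $c_{ij}=\lambda_{i}-\lambda_{j}$ and reindexing the $\sum_{i<j}$ using the symmetries $\Sigma_{ji}=\Sigma_{ij}$ and $f_{ji}=-f_{ij}$, reduces to $\sum_{i=1}^{m}\lambda_{i}\sum_{j\neq i}\int_{\Sigma_{ij}}(\mathrm{div}(X)-\langle X,x\rangle)f_{ij}\gamma_{\adimn}$. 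By Lemma \ref{lemma41} applied to each $\Omega_{i}$, the inner sum equals $\frac{\d^{2}}{\d s^{2}}|_{s=0}\gamma_{\adimn}(\Omega_{i}^{(s)})$, which vanishes by the volume-preserving assumption. This recovers \eqref{four32pv2} up to the sign $\epsilon_{ij}$.

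To pin down $\epsilon_{ij}=-1$, I would argue by contradiction from maximality: suppose $\epsilon_{ij}=+1$ on a nonempty relatively open portion of $\Sigma_{ij}$ where $\vnorm{\overline{\nabla}T_{\rho}(1_{\Omega_{i}}-1_{\Omega_{j}})}>0$. I choose a mean-zero bump $f_{ij}$ supported there and set all other $f_{kl}\equiv 0$, so that \eqref{eight2} is trivially satisfied and Lemma \ref{lemma27} produces a volume-preserving extension $X$. Under this choice, the first term of \eqref{four32pv2} reduces to $2\int_{\Sigma_{ij}}\int_{\Sigma_{ij}}G(x,y)f_{ij}(x)f_{ij}(y)\,\d x\,\d y\geq 0$ by Lemma \ref{gpsd}, while the second term is strictly positive by hypothesis, yielding a strictly positive second variation and contradicting maximality. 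Finally, for the Hausdorff dimension claim, $T_{\rho}(1_{\Omega_{i}}-1_{\Omega_{j}})$ is real-analytic on $\R^{\adimn}$ (convolution of an $L^{\infty}$ function with the Gaussian kernel); if its critical set intersected $\Sigma_{ij}$ in a subset of positive $\sdimn$-dimensional Hausdorff measure, then both $T_{\rho}(1_{\Omega_{i}}-1_{\Omega_{j}})=c_{ij}$ and $\overline{\nabla}T_{\rho}(1_{\Omega_{i}}-1_{\Omega_{j}})=0$ would hold on that subset, and analytic unique continuation would force $T_{\rho}(1_{\Omega_{i}}-1_{\Omega_{j}})\equiv c_{ij}$ on $\R^{\adimn}$; integrating against $\gamma_{\adimn}$ then gives $c_{ij}=\gamma_{\adimn}(\Omega_{i})-\gamma_{\adimn}(\Omega_{j})=0$, and injectivity of $T_{\rho}$ forces $1_{\Omega_{i}}=1_{\Omega_{j}}$ a.e., a contradiction. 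I expect the main obstacle to be making this analytic unique continuation step fully rigorous, since the vanishing locus may be a rough $\sdimn$-dimensional subset of the merely $C^{\infty}$ surface $\Sigma_{ij}$ rather than a relatively open piece.
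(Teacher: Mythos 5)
Your overall scaffolding is the right one and matches the intended proof: start from the general second variation formula \eqref{four30v2} of Lemma \ref{lemma6v2}, invoke the first-variation condition of Lemma \ref{firstvarmaxns} (which, as you correctly note, is an unstated hypothesis in the lemma — the sets should be maximizers of Problem \ref{prob2}), deduce from constancy of $T_\rho(1_{\Omega_i}-1_{\Omega_j})$ along $\Sigma_{ij}$ that $\overline{\nabla}T_\rho(1_{\Omega_i}-1_{\Omega_j})$ is parallel to $N_{ij}$ up to a sign, kill the third term of \eqref{four30v2} via the Lagrange-multiplier identity $c_{ij}=\lambda_i-\lambda_j$ together with volume preservation and Lemma \ref{lemma41}, and then use a localized variation (via Lemmas \ref{lemma27} and \ref{gpsd}) plus maximality to rule out the wrong sign. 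This matches the paper's method (the proof is deferred to \cite{heilman20d}, but the bilinear analogue Lemma \ref{lemma7rn} is handled in exactly this spirit). One small point: the identity $c_{ij}=\lambda_i-\lambda_j$ is not literally stated in Lemma \ref{firstvarmaxns}; it should be justified as part of the Lagrange-multiplier derivation rather than treated as a bookkeeping rewrite, though this is routine.

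The genuine gap is the Hausdorff-dimension claim, and it is more serious than you suggest. Your unique-continuation argument is not merely ``hard to make rigorous'' — as stated it is false. If $\phi := T_\rho(1_{\Omega_i}-1_{\Omega_j}) - c_{ij}$ vanishes together with $\nabla\phi$ on a positive-$\mathcal{H}^{n}$-measure subset of the $C^\infty$ hypersurface $\Sigma_{ij}$, analytic unique continuation does \emph{not} force $\phi\equiv 0$: the analytic function $\phi_0(x)=x_{n+1}^2$ vanishes to second order along the entire hyperplane $\{x_{n+1}=0\}$ yet is not identically zero. The phenomenon you want to rule out is precisely ``second-order tangency of the analytic variety $\{\phi=0\}$ to the surface $\Sigma_{ij}$,'' and there is no soft unique-continuation shortcut; moreover $\Sigma_{ij}$ is only known to be $C^\infty$ (Lemma \ref{reglem}), not real-analytic, so you cannot restrict $\phi$ to an analytic chart of $\Sigma_{ij}$ either. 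The paper's own route in the bilinear setting (see the discussion after Lemma \ref{lemma7rn}) is variational: if the gradient vanished on a relatively open subset of the interface, one constructs a volume-preserving variation whose second derivative has the wrong sign. That argument actually controls the interior of the bad set rather than its full $\mathcal{H}^n$-measure, so bridging from ``empty interior'' to ``Hausdorff dimension at most $n-1$'' still requires the analytic-stratification input from \cite{heilman20d}. You should either reproduce that stratification argument (that the critical zero set of a non-constant real-analytic function has dimension $\leq n-1$, a fact about real-analytic varieties rather than an application of unique continuation) and then rule out $\phi\equiv 0$ exactly as you do, or explicitly cite \cite{heilman20d} for this step.
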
%

\section{Almost Eigenfunctions of the Second Variation}

%
%
\subsection{Two Sets}
Let $\Sigma\colonequals\redb\Omega$.  For any bounded measurable $f\colon\Sigma\to\R$, define the following function (if it exists):
\begin{equation}\label{sdef}
S(f)(x)\colonequals (1-\rho^{2})^{-(\adimn)/2}(2\pi)^{-(\adimn)/2}\int_{\Sigma}f(y)e^{-\frac{\vnorm{y-\rho x}^{2}}{2(1-\rho^{2})}}\,\d y,\qquad\forall\,x\in\Sigma.
\end{equation}

The following Lemma was proven in \cite[Lemma 5.1]{heilman20d}.  We reproduce that proof, since we require it below.

\begin{lemma}[\embolden{Key Lemma, $m=2$, Translations as Almost Eigenfunctions}]\label{treig}
Let $\Omega,\Omega^{c}\subset\R^{\adimn}$ maximize Problem \ref{prob2} for $m=2$.  Let $v\in\R^{\adimn}$.  Then
$$S(\langle v,N\rangle)(x)=\langle v,N(x)\rangle\frac{1}{\rho}\vnorm{\overline{\nabla} T_{\rho}1_{\Omega}(x)},\qquad\forall\,x\in\Sigma.$$
\end{lemma}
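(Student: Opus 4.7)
The plan is to compute $\overline{\nabla}_{x} T_{\rho} 1_{\Omega}(x)$ in two ways: once by differentiating the kernel directly, and once by using the maximizer's normal structure from Lemma~\ref{lemma7p}. Comparing the two expressions will produce exactly the claimed identity.

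First I would write, for each fixed $x\in\R^{\adimn}$, the kernel $G_{x}(y)\colonequals(1-\rho^{2})^{-(\adimn)/2}(2\pi)^{-(\adimn)/2}e^{-\vnorm{y-\rho x}^{2}/(2(1-\rho^{2}))}$, so that $T_{\rho}1_{\Omega}(x)=\int_{\Omega}G_{x}(y)\,\d y$ and $S(f)(x)=\int_{\Sigma}f(y)G_{x}(y)\,\d y$. The crucial identity is the symmetry of the Mehler-type kernel in the variables $x$ and $y$:
\[
\nabla_{x} G_{x}(y)=\frac{\rho(y-\rho x)}{1-\rho^{2}}G_{x}(y)=-\rho\,\nabla_{y}G_{x}(y).
\]
Differentiating under the integral in the definition of $T_{\rho}1_{\Omega}(x)$ and applying the divergence theorem on $\Omega$ (justified since $\Omega$ has locally finite surface area by Lemma~\ref{reglem} and $G_{x}$ decays rapidly), I obtain
\[
\overline{\nabla}_{x}T_{\rho}1_{\Omega}(x)=-\rho\int_{\Omega}\nabla_{y}G_{x}(y)\,\d y=-\rho\int_{\Sigma}G_{x}(y)N(y)\,\d y.
\]
Taking the inner product with $v\in\R^{\adimn}$ and recognizing the right-hand side as $-\rho S(\langle v,N\rangle)(x)$ yields
\[
\langle v,\overline{\nabla}_{x}T_{\rho}1_{\Omega}(x)\rangle=-\rho\,S(\langle v,N\rangle)(x),\qquad\forall\,x\in\R^{\adimn}.
\]

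To finish, I restrict to $x\in\Sigma$ and invoke the maximizer hypothesis. By Lemma~\ref{lemma7p} (specifically \eqref{nabeq2}), $\overline{\nabla}T_{\rho}1_{\Omega}(x)=-N(x)\vnorm{\overline{\nabla}T_{\rho}1_{\Omega}(x)}$ for $x\in\Sigma$, so $\langle v,\overline{\nabla}T_{\rho}1_{\Omega}(x)\rangle=-\langle v,N(x)\rangle\vnorm{\overline{\nabla}T_{\rho}1_{\Omega}(x)}$. Substituting into the previous display and dividing by $-\rho$ gives the desired formula
\[
S(\langle v,N\rangle)(x)=\frac{1}{\rho}\langle v,N(x)\rangle\vnorm{\overline{\nabla}T_{\rho}1_{\Omega}(x)},\qquad\forall\,x\in\Sigma.
\]

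There is no substantive obstacle here: the identity is really an algebraic consequence of the symmetry $\nabla_{x}G_{x}(y)=-\rho\nabla_{y}G_{x}(y)$ combined with the fact that at a maximizer the gradient of $T_{\rho}1_{\Omega}$ is (anti-)parallel to $N$ on $\Sigma$. The only points to verify carefully are (i) differentiation under the integral, which is legitimate because $G_{x}(y)$ and its $x$-derivatives are dominated by a Gaussian, and (ii) the applicability of the divergence theorem on $\Omega$, which follows from the locally finite surface area from Lemma~\ref{reglem} together with the rapid decay of $G_{x}$ that makes the boundary integral at infinity vanish.
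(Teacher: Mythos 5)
Your proposal is correct and is essentially the paper's own argument: you differentiate under the integral sign, use the symmetry $\nabla_{x}G_{x}(y)=-\rho\nabla_{y}G_{x}(y)$ to rewrite the $x$-gradient as a $y$-divergence, apply the divergence theorem to pass from $\Omega$ to $\Sigma$, and then invoke \eqref{nabeq2} (the anti-parallelism of $\overline{\nabla}T_{\rho}1_{\Omega}$ and $N$ at a maximizer) to equate $-\langle v,\overline{\nabla}T_{\rho}1_{\Omega}(x)\rangle$ with $\langle v,N(x)\rangle\vnorm{\overline{\nabla}T_{\rho}1_{\Omega}(x)}$. The paper carries out the same computation while keeping the inner product with $v$ inside the integral throughout (see \eqref{gre}--\eqref{lasteq}), but that is only a cosmetic difference from your version, which computes the full gradient first and pairs with $v$ afterward.
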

\begin{proof}
Since $T_{\rho}1_{\Omega}(x)$ is constant for all $x\in\partial\Omega$ by Lemma \ref{firstvarmaxns}, $\overline{\nabla} T_{\rho}1_{\Omega}(x)$ is parallel to $N(x)$ for all $x\in\partial\Omega$.  That is \eqref{nabeq2} holds, i.e.
\begin{equation}\label{firstve}
\overline{\nabla} T_{\rho}1_{\Omega}(x)=-N(x)\vnorm{\overline{\nabla} T_{\rho}1_{\Omega}(x)},\qquad\forall\,x\in\Sigma.
\end{equation}
From Definition \ref{oudef}, and then using the divergence theorem,
\begin{equation}\label{gre}
\begin{aligned}
\langle v,\overline{\nabla} T_{\rho}1_{\Omega}(x)\rangle
&=(1-\rho^{2})^{-(\adimn)/2}(2\pi)^{-(\adimn)/2}\Big\langle v,\int_{\Omega} \overline{\nabla}_{x}e^{-\frac{\vnorm{y-\rho x}^{2}}{2(1-\rho^{2})}}\,\d y\Big\rangle\\
&=(1-\rho^{2})^{-(\adimn)/2}(2\pi)^{-(\adimn)/2}\frac{\rho}{1-\rho^{2}}\int_{\Omega} \langle v,\,y-\rho x\rangle e^{-\frac{\vnorm{y-\rho x}^{2}}{2(1-\rho^{2})}}\,\d y\\
&=-(1-\rho^{2})^{-(\adimn)/2}(2\pi)^{-(\adimn)/2}\rho\int_{\Omega} \mathrm{div}_{y}\Big(ve^{-\frac{\vnorm{y-\rho x}^{2}}{2(1-\rho^{2})}}\Big)\,\d y\\
&=-(1-\rho^{2})^{-(\adimn)/2}(2\pi)^{-(\adimn)/2}\rho\int_{\Sigma}\langle v,N(y)\rangle e^{-\frac{\vnorm{y-\rho x}^{2}}{2(1-\rho^{2})}}\,\d y\\
&\stackrel{\eqref{sdef}}{=}-\rho\, S(\langle v,N\rangle)(x).
\end{aligned}
\end{equation}
Therefore,
\begin{equation}\label{lasteq}
\langle v,N(x)\rangle\vnorm{\overline{\nabla} T_{\rho}1_{\Omega}(x)}
\stackrel{\eqref{firstve}}{=}-\langle v,\overline{\nabla} T_{\rho}1_{\Omega}(x)\rangle\\
\stackrel{\eqref{gre}}{=}\rho\, S(\langle v,N\rangle)(x).
\end{equation}
\end{proof}

A priori finiteness of the above integrals was shown in \cite[Remark 5.2]{heilman20d} to follow from the divergence theorem.

\begin{lemma}[\embolden{Dilation as Almost Eigenfunction}]\label{mceig}
Let $\Omega,\Omega^{c}\subset\R^{\adimn}$ maximize Problem \ref{prob2} with $m=2$.  Then
\begin{equation}\label{six1zv}
\begin{aligned}
&S(\langle\cdot ,N\rangle)(x)-\langle x,N(x)\rangle\vnormf{\overline{\nabla}T_{\rho}(1_{\Omega})(x)}\\
&\qquad=\Big(\frac{1}{\rho^{2}}-1\Big)\Big(\langle x,N(x)\rangle\vnorm{\overline{\nabla} T_{\rho}(1_{\Omega})(x)}
+\rho \frac{\d}{\d\rho}T_{\rho}(1_{\Omega})(x)\Big),\qquad\forall\,x\in\Sigma.
 \end{aligned}
\end{equation}
\end{lemma}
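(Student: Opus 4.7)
The plan is to decompose the dilation field $\langle y,N(y)\rangle$ into a translation contribution and a remainder, and then to identify the remainder with (a scalar multiple of) $\overline{\Delta}\,T_{\rho}1_{\Omega}$.  For fixed $x\in\Sigma$, writing
\[
\langle y,N(y)\rangle \;=\; \rho\langle x,N(y)\rangle \;+\; \langle y-\rho x,N(y)\rangle
\]
and applying Lemma \ref{treig} with the constant vector $v=x$ gives, by linearity of $S$,
\[
S(\rho\langle x,N(\cdot)\rangle)(x) \;=\; \langle x,N(x)\rangle\vnorm{\overline{\nabla} T_{\rho}1_{\Omega}(x)}.
\]
Therefore \eqref{six1zv} reduces to proving
\[
S(\langle\cdot-\rho x,N\rangle)(x) \;=\; \Bigl(\frac{1}{\rho^{2}}-1\Bigr)\Bigl(\langle x,N(x)\rangle\vnorm{\overline{\nabla} T_{\rho}1_{\Omega}(x)}+\rho\frac{\d}{\d\rho}T_{\rho}1_{\Omega}(x)\Bigr).
\]

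For the remainder, I would apply the divergence theorem on $\Omega$ to the vector field $V(y)\colonequals(y-\rho x)\,e^{-\vnorm{y-\rho x}^{2}/(2(1-\rho^{2}))}$, with the same integrability justification used for Lemma \ref{treig} (cf.\ \cite[Remark 5.2]{heilman20d}).  A direct calculation gives
\[
\mathrm{div}_{y}V(y) \;=\; \Bigl((\adimn)-\frac{\vnorm{y-\rho x}^{2}}{1-\rho^{2}}\Bigr)e^{-\vnorm{y-\rho x}^{2}/(2(1-\rho^{2}))},
\]
and differentiating the explicit Gaussian kernel in \eqref{oudef} twice in $x$ shows that this same bracketed expression, multiplied by $-\rho^{2}/(1-\rho^{2})$, is precisely the integrand for $\overline{\Delta}_{x}T_{\rho}1_{\Omega}(x)$.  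Hence
\[
S(\langle\cdot-\rho x,N\rangle)(x) \;=\; -\frac{1-\rho^{2}}{\rho^{2}}\,\overline{\Delta}\,T_{\rho}1_{\Omega}(x).
\]

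The final step is to rewrite $\overline{\Delta}\,T_{\rho}1_{\Omega}(x)$ using the Ornstein-Uhlenbeck identity \eqref{oup}, which yields $\overline{\Delta}\,T_{\rho}1_{\Omega}(x)=-\rho\frac{\d}{\d\rho}T_{\rho}1_{\Omega}(x)+\langle x,\overline{\nabla}T_{\rho}1_{\Omega}(x)\rangle$, and then, for $x\in\Sigma$, to invoke \eqref{nabeq2} from Lemma \ref{lemma7p} to replace $\overline{\nabla}T_{\rho}1_{\Omega}(x)$ by $-N(x)\vnorm{\overline{\nabla}T_{\rho}1_{\Omega}(x)}$.  Substituting both replacements into the identity above, and simplifying $(1-\rho^{2})/\rho^{2}=1/\rho^{2}-1$, delivers the target formula.

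The main obstacle is bookkeeping rather than conceptual: several $\rho$-dependent prefactors and quadratic phases in the Ornstein-Uhlenbeck kernel must be tracked carefully, and the divergence theorem on the unbounded set $\Omega$ must be justified using only the regularity of $\Sigma$ provided by Lemma \ref{reglem}.  Conceptually, the $\rho\frac{\d}{\d\rho}T_{\rho}1_{\Omega}$ correction on the right-hand side of \eqref{six1zv} measures precisely the failure of the dilation field $\langle\cdot,N\rangle$ to be a true $1/\rho^{2}$-eigenfunction of $S$; this is the noise-stability analogue of the $\lambda$ term in \eqref{lheqv2}, and its vanishing is exactly the content of the hyperstability condition that promotes $\langle\cdot,N\rangle$ to a genuine eigenfunction of $S$.
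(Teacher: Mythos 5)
Your proof is correct and follows essentially the same route as the paper: both arguments hinge on applying the divergence theorem to $(y-\rho x)\,e^{-\vnorm{y-\rho x}^{2}/(2(1-\rho^{2}))}$ over $\Omega$, invoking Lemma \ref{treig} for the translation piece, and rewriting $\overline{\Delta}\,T_{\rho}1_{\Omega}$ via the Ornstein--Uhlenbeck relation \eqref{oup} together with the first-variation condition \eqref{nabeq2}. The paper's proof is organized as computing $\mathrm{div}\,\overline{\nabla}\,T_{\rho}1_{\Omega}$ in two ways, whereas you start from the explicit decomposition $\langle y,N(y)\rangle=\rho\langle x,N(y)\rangle+\langle y-\rho x,N(y)\rangle$; this is only a cosmetic reorganization of the same chain of identities (indeed, the paper's $S(\langle\cdot,N\rangle)(x)-\rho\langle x,S(N)(x)\rangle$ in \eqref{six3tv} is exactly your $S(\langle\cdot-\rho x,N\rangle)(x)$).
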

\begin{proof}%
Taking the gradient and divergence of \eqref{oudef},
\begin{equation}\label{six3v}
\begin{aligned}
&\mathrm{div}\overline{\nabla} T_{\rho}1_{\Omega}(x)
=(1-\rho^{2})^{-(\adimn)/2}(2\pi)^{-(\adimn)/2}\int_{\Omega}\mathrm{div}_{x}\overline{\nabla}_{x}e^{-\frac{\vnorm{y-\rho x}^{2}}{2(1-\rho^{2})}}\,\d y\\
&\qquad=(1-\rho^{2})^{-(\adimn)/2}(2\pi)^{-(\adimn)/2}\int_{\Omega}\mathrm{div}_{x}\Big(\rho\frac{ y-\rho x}{1-\rho^{2}} e^{-\frac{\vnorm{y-\rho x}^{2}}{2(1-\rho^{2})}}\Big)\,\d y\\
&\qquad=(1-\rho^{2})^{-(\adimn)/2}(2\pi)^{-(\adimn)/2}\int_{\Omega}\Big(\rho^{2}\frac{ \vnorm{y-\rho x}^{2}}{(1-\rho^{2})^{2}}-(\adimn)\frac{\rho^{2}}{1-\rho^{2}}\Big) e^{-\frac{\vnorm{y-\rho x}^{2}}{2(1-\rho^{2})}}\,\d y\\
&\qquad=(1-\rho^{2})^{-(\adimn)/2}(2\pi)^{-(\adimn)/2}\frac{-\rho^{2}}{1-\rho^{2}}\int_{\Omega}\mathrm{div}_{y}\Big((y-\rho x)e^{-\frac{\vnorm{y-\rho x}^{2}}{2(1-\rho^{2})}}\Big) \,\d y.
\end{aligned}
\end{equation}%
Applying the divergence theorem to the last equality in \eqref{six3v},
\begin{equation}\label{six3tv}
\begin{aligned}
\mathrm{div}\overline{\nabla} T_{\rho}1_{\Omega}(x)
&=(1-\rho^{2})^{-(\adimn)/2}(2\pi)^{-(\adimn)/2}\frac{-\rho^{2}}{1-\rho^{2}}\int_{\Sigma}\Big\langle (y-\rho x), N(y)\Big\rangle e^{-\frac{\vnorm{y-\rho x}^{2}}{2(1-\rho^{2})}} \,\d y\\
&\stackrel{\eqref{sdef}}{=}-\frac{\rho^{2}}{1-\rho^{2}}\Big(S(\langle \cdot, N\rangle)(x)  -\rho \langle x,S(N)(x)\rangle\Big)\\
&\stackrel{\eqref{lasteq}}{=}-\frac{\rho^{2}}{1-\rho^{2}}\Big(S(\langle \cdot, N\rangle)(x)  -\langle x,N(x)\rangle\vnorm{\nabla T_{\rho}1_{\Omega}(x)}\Big).
\end{aligned}
\end{equation}
This equation and \eqref{firstve} proves \eqref{six1zv}, together with
\begin{flalign*}
\mathrm{div}\overline{\nabla} T_{\rho}1_{\Omega}(x)
&=\overline{\Delta}T_{\rho}1_{\Omega}(x)-\langle x,\nabla T_{\rho}1_{\Omega}(x)+\langle x,\overline{\nabla} T_{\rho}1_{\Omega}(x)\rangle\\
&\stackrel{\eqref{oup}\wedge\eqref{firstve}}{=}-\rho\frac{d}{d\rho}T_{\rho}1_{\Omega}(x)-\langle x,N(x)\rangle\vnormf{\overline{\nabla} T_{\rho}1_{\Omega}(x)}.
\end{flalign*}
\end{proof}

A priori finiteness of the above integrals will be shown in Remark \ref{drk} below.

\subsection{More than Two Sets}

Let $v\in\R^{\adimn}$ and denote $f_{ij}\colonequals\langle v,N_{ij}\rangle$ for all $1\leq i,j\leq m$.  For simplicity of notation in the formulas below, if $1\leq i\leq m$ and if a vector $N(x)$ appears inside an integral over $\partial\Omega_{i}$, then $N(x)$ denotes the unit exterior pointing normal vector to $\Omega_{i}$ at $x\in\redb\Omega_{i}$.  Similarly, for simplicity of notation, we denote $\langle v,N\rangle$ as the collection of functions $(\langle v,N_{ij}\rangle)_{1\leq i<j\leq m}$.  For any $1\leq i<j\leq m$, define
\begin{equation}\label{sdef2}
S_{ij}(\langle v,N\rangle)(x)\colonequals (1-\rho^{2})^{-(\adimn)/2}(2\pi)^{-(\adimn)/2}\Big(\int_{\partial\Omega_{i}}-\int_{\partial\Omega_{j}}\Big)\langle v,N(y)\rangle e^{-\frac{\vnorm{y-\rho x}^{2}}{2(1-\rho^{2})}}\,\d y,\,\forall\,x\in\Sigma_{ij}.
\end{equation}

\begin{lemma}[\embolden{Key Lemma, $m\geq 2$, Translations as Almost Eigenfunctions}, {\cite[Lemma 5.4]{heilman20d}}]\label{treig2}
Let $\Omega_{1},\ldots,\Omega_{m}\subset\R^{\adimn}$ maximize Problem \ref{prob2}.  Fix $1\leq i<j\leq m$.  Let $v\in\R^{\adimn}$.  Then
$$S_{ij}(\langle v,N\rangle)(x)=\langle v,N_{ij}(x)\rangle\frac{1}{\rho}\vnorm{\overline{\nabla} T_{\rho}(1_{\Omega_{i}}-1_{\Omega_{j}})(x)},\qquad\forall\,x\in\Sigma_{ij}.$$
\end{lemma}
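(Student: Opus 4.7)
The plan is to mimic the proof of Lemma~\ref{treig} almost verbatim, replacing the single indicator $1_{\Omega}$ by the signed difference $1_{\Omega_{i}} - 1_{\Omega_{j}}$ and being careful to track the boundary contributions on $\partial\Omega_{i}$ and $\partial\Omega_{j}$ separately when the divergence theorem is applied. The key structural reason this should work is that, by Lemma~\ref{firstvarmaxns}, the function $T_{\rho}(1_{\Omega_{i}}-1_{\Omega_{j}})$ is constant on $\Sigma_{ij}$, exactly as $T_{\rho}1_{\Omega}$ was constant on $\partial\Omega$ in the $m=2$ case; consequently $\overline{\nabla}T_{\rho}(1_{\Omega_{i}}-1_{\Omega_{j}})$ is parallel to $N_{ij}$ along $\Sigma_{ij}$, and indeed \eqref{nabeq3} identifies the proportionality constant up to sign. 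This reduces matters to computing $\langle v,\overline{\nabla}T_{\rho}(1_{\Omega_{i}}-1_{\Omega_{j}})(x)\rangle$ for $x \in \Sigma_{ij}$ and recognizing the result as $-\rho\, S_{ij}(\langle v,N\rangle)(x)$.

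Concretely, I would first differentiate \eqref{oudef} under the integral with respect to $x$, using
\[
\overline{\nabla}_{x}\, e^{-\frac{\vnorm{y-\rho x}^{2}}{2(1-\rho^{2})}} = \frac{\rho}{1-\rho^{2}}(y-\rho x)\, e^{-\frac{\vnorm{y-\rho x}^{2}}{2(1-\rho^{2})}},
\]
to obtain
\[
\langle v,\overline{\nabla}T_{\rho}(1_{\Omega_{i}}-1_{\Omega_{j}})(x)\rangle = \frac{\rho\,(1-\rho^{2})^{-(\adimn)/2}}{(2\pi)^{(\adimn)/2}(1-\rho^{2})}\Big(\int_{\Omega_{i}}-\int_{\Omega_{j}}\Big)\langle v,y-\rho x\rangle\, e^{-\frac{\vnorm{y-\rho x}^{2}}{2(1-\rho^{2})}}\,\d y.
\]
Next I would observe the identity $\langle v,y-\rho x\rangle\, e^{-\frac{\vnorm{y-\rho x}^{2}}{2(1-\rho^{2})}} = -(1-\rho^{2})\,\mathrm{div}_{y}\bigl(v\, e^{-\frac{\vnorm{y-\rho x}^{2}}{2(1-\rho^{2})}}\bigr)$, and apply the divergence theorem separately on $\Omega_{i}$ and on $\Omega_{j}$. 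Because $N$ is always interpreted as the outward unit normal to the domain of integration, this gives exactly the combination $\bigl(\int_{\partial\Omega_{i}}-\int_{\partial\Omega_{j}}\bigr)\langle v,N(y)\rangle\, e^{-\frac{\vnorm{y-\rho x}^{2}}{2(1-\rho^{2})}}\,\d y$ appearing in the definition \eqref{sdef2} of $S_{ij}$.

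Combining the two computations yields $\langle v,\overline{\nabla}T_{\rho}(1_{\Omega_{i}}-1_{\Omega_{j}})(x)\rangle = -\rho\, S_{ij}(\langle v,N\rangle)(x)$. Using \eqref{nabeq3} to rewrite the left-hand side as $-\langle v,N_{ij}(x)\rangle\,\vnorm{\overline{\nabla}T_{\rho}(1_{\Omega_{i}}-1_{\Omega_{j}})(x)}$ and dividing by $-\rho$ gives the desired identity.

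The main technical point, rather than a real obstacle, is justifying the use of the divergence theorem on the unbounded sets $\Omega_{i}$ and $\Omega_{j}$: one needs that $v\,e^{-\vnorm{y-\rho x}^{2}/(2(1-\rho^{2}))}$ decays fast enough at infinity and that $\Omega_{i}$, $\Omega_{j}$ have sufficient regularity for the divergence theorem to apply. Both issues are handled by Lemma~\ref{reglem} (local finite union of $C^{\infty}$ pieces and locally finite surface area) together with Gaussian decay, exactly as in the parenthetical remark following Lemma~\ref{treig}; the only new bookkeeping is accounting for the two boundary pieces and the opposite signs on $\Omega_{i}$ and $\Omega_{j}$, which is what produces the signed operator $S_{ij}$ in \eqref{sdef2}.
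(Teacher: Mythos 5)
Your proposal is correct and is essentially the same argument the paper uses: the paper reproduces the $m=2$ proof (Lemma~\ref{treig}) and then cites \cite[Lemma~5.4]{heilman20d} for the $m\geq 2$ version, which is indeed obtained exactly as you describe, by linearity replacing $1_{\Omega}$ with $1_{\Omega_i}-1_{\Omega_j}$, applying the divergence theorem separately on $\Omega_i$ and $\Omega_j$ (with outward normals), and invoking \eqref{nabeq3} in place of \eqref{nabeq2} to fix the sign of the proportionality constant. The arithmetic of the constants ($\langle v,\overline{\nabla}T_\rho(1_{\Omega_i}-1_{\Omega_j})\rangle=-\rho\,S_{ij}(\langle v,N\rangle)$) and the justification of the divergence theorem via Gaussian decay and the regularity from Lemma~\ref{reglem} all check out.
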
%

\begin{lemma}[\embolden{Second Variation of Translations, Multiple Sets}, {\cite[Lemma 5.5]{heilman20d}}]\label{keylem}
Let $v\in\R^{\adimn}$.  Let $\Omega_{1},\ldots,\Omega_{m}\subset\R^{\adimn}$ maximize Problem \ref{prob2}.  For each $1\leq i\leq m$, let $\{\Omega_{i}^{(s)}\}_{s\in(-1,1)}$ be the variation of $\Omega_{i}$ corresponding to the constant vector field $X\colonequals v$.  Assume that
$$\int_{\partial\Omega_{i}}\langle v,N(x)\rangle \gamma_{\adimn}(x)\,\d x=0,\qquad\forall\,1\leq i\leq m.$$
Then
\begin{flalign*}
&\frac{1}{2}\frac{\d^{2}}{\d s^{2}}\Big|_{s=0}\sum_{i=1}^{m}\int_{\R^{\adimn}}1_{\Omega_{i}^{(s)}}(x)T_{\rho}1_{\Omega_{i}^{(s)}}(x)\gamma_{\adimn}(x)\,\d x\\
&\qquad\qquad\qquad=\Big(\frac{1}{\rho}-1\Big)\sum_{1\leq i<j\leq m}\int_{\Sigma_{ij}}\vnormf{\overline{\nabla}T_{\rho}(1_{\Omega_{i}}-1_{\Omega_{j}})(x)}\langle v,N_{ij}(x)\rangle^{2}\gamma_{\adimn}(x)\,\d x.
\end{flalign*}%
\end{lemma}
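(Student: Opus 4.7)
\medskip

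\noindent\textbf{Proof plan.}  The plan is to combine the volume-preserving second variation formula of Lemma~\ref{lemma7r} with the translation-as-almost-eigenfunction identity of Lemma~\ref{treig2}.  Since the vector field $X\equiv v$ is constant, its flow produces the pure translation $\Omega_i^{(s)} = \Omega_i + sv$.  The hypothesis $\int_{\partial\Omega_i}\langle v,N\rangle\gamma_{\adimn}\,\d x = 0$ for every $1\leq i\leq m$ is precisely condition \eqref{eight2}, so the Extension Lemma~\ref{lemma27} applied to a smooth compactly-supported modification of $v$ produces an $X\in C_{0}^{\infty}(\R^{\adimn},\R^{\adimn})$ that agrees with $v$ on $\cup_{1\le i<j\le m}\Sigma_{ij}$ and whose flow keeps each Gaussian volume $\gamma_{\adimn}(\Omega_{i}^{(s)})$ constant in $s$.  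Because the right-hand side of \eqref{four32pv2} depends on $X$ only through the boundary values $f_{ij}=\langle X,N_{ij}\rangle$ on $\Sigma_{ij}$ and through $\langle X,N\rangle$ on $\partial\Omega_{i}=\cup_{j\neq i}\Sigma_{ij}$ (modulo a set of Hausdorff dimension $\le\sdimn-1$), the second variation computed for this $X$ equals the second variation ``for $v$'' in the sense of the statement.

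Next, I would rewrite the first line on the right of \eqref{four32pv2}.  The factorization \eqref{gdef} gives
\[
\Big(\int_{\partial\Omega_i}-\int_{\partial\Omega_j}\Big)G(x,y)\langle v,N(y)\rangle\,\d y \;=\; \gamma_{\adimn}(x)\,S_{ij}(\langle v,N\rangle)(x),\qquad x\in\Sigma_{ij},
\]
directly from the definition \eqref{sdef2} of $S_{ij}$.  Then I would invoke Lemma~\ref{treig2} to replace
\[
S_{ij}(\langle v,N\rangle)(x) \;=\; \frac{1}{\rho}\,\langle v,N_{ij}(x)\rangle\,\vnormf{\overline{\nabla}T_{\rho}(1_{\Omega_{i}}-1_{\Omega_{j}})(x)},\qquad x\in\Sigma_{ij}.
\]
Multiplying by $f_{ij}(x)=\langle v,N_{ij}(x)\rangle$ and integrating, the first line of \eqref{four32pv2} becomes
\[
\frac{1}{\rho}\sum_{1\le i<j\le m}\int_{\Sigma_{ij}}\vnormf{\overline{\nabla}T_{\rho}(1_{\Omega_{i}}-1_{\Omega_{j}})(x)}\,\langle v,N_{ij}(x)\rangle^{2}\,\gamma_{\adimn}(x)\,\d x.
\]
The second line of \eqref{four32pv2}, evaluated with $f_{ij}=\langle v,N_{ij}\rangle$, contributes the negative of the same integral but with prefactor $1$.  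Summing the two lines yields the factor $\tfrac{1}{\rho}-1$ claimed in the lemma.

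The main subtlety — not really an obstacle, but the only non-algebraic point — is the extension step: one must verify that a smooth truncation of $v$ against a large cutoff still satisfies \eqref{eight2}, so that Lemma~\ref{lemma27} applies and produces a genuine $C_{0}^{\infty}$ volume-preserving variation whose boundary trace on each $\Sigma_{ij}$ coincides with $v$.  Once this is in place, the maximality of $\Omega_{1},\ldots,\Omega_{m}$ enters only through Lemmas~\ref{firstvarmaxns} and~\ref{treig2} (implicitly, the formula \eqref{nabeq3} used in Lemma~\ref{lemma7r}, and the eigenvalue-type identity for $S_{ij}$), and the remainder is a direct algebraic simplification of the two preceding lemmas.
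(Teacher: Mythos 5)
The paper itself does not reprove Lemma \ref{keylem} — it cites \cite[Lemma 5.5]{heilman20d} — but your proposal uses exactly the ingredients the paper sets up for this purpose: plug $X\colonequals v$ into the simplified second-variation formula \eqref{four32pv2} of Lemma \ref{lemma7r}, rewrite the first term via the factorization $G(x,y)=\gamma_{\adimn}(x)\cdot(1-\rho^2)^{-(\adimn)/2}(2\pi)^{-(\adimn)/2}e^{-\vnormf{y-\rho x}^2/2(1-\rho^2)}$ from \eqref{gdef} as $\gamma_{\adimn}(x)S_{ij}(\langle v,N\rangle)(x)$, and then substitute the eigenvalue identity of Lemma \ref{treig2} to get the $1/\rho$ prefactor; the $-1$ comes from the second line of \eqref{four32pv2}. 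This is the correct and intended derivation, and your algebra checks out.

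The one point you flag but leave unresolved — making sense of Lemma \ref{lemma7r}, stated for $X\in C_0^\infty$, for the non-compactly-supported constant field $v$ — is a real technical step, and the paper itself handles the analogous issue (finiteness of the $S_{ij}$ integrals, justification of the divergence theorem) by deferring to the prior work and Remark \ref{drk}. You correctly identify that after cutting off $v$ at a large radius the constraint \eqref{eight2} is only satisfied up to an error tending to zero, so one needs either a limiting argument in the cutoff radius or a small correction to the vector field to restore \eqref{eight2} exactly before invoking Lemma \ref{lemma27}. This is routine given the Gaussian decay, and is consistent with how the paper justifies similar computations, so flagging it is appropriate; a complete writeup would carry out that limiting argument explicitly. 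In short: correct, same route, with the only loose end being a standard truncation/limit justification that you correctly identify as the non-algebraic content.
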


\begin{lemma}[\embolden{Dilation as Almost Eigenfunction}]\label{mceigp}
Let $\Omega_{1},\ldots,\Omega_{m}\subset\R^{\adimn}$ maximize Problem \ref{prob2}.  Then for all $1\leq i,j\leq m$,
\begin{equation}\label{six1z}
\begin{aligned}
&S_{ij}(\langle\cdot ,N\rangle)(x)-\langle x,N_{ij}(x)\rangle\vnormf{\overline{\nabla}T_{\rho}(1_{\Omega_{i}}-1_{\Omega_{j}})(x)}\\
&\qquad=\Big(\frac{1}{\rho^{2}}-1\Big)\Big(\langle x,N_{ij}(x)\rangle\vnorm{\overline{\nabla} T_{\rho}(1_{\Omega_{i}}-1_{\Omega_{j}})(x)}
+\rho \frac{\d}{\d\rho}T_{\rho}(1_{\Omega_{i}}-1_{\Omega_{j}})(x)\Big),\qquad\forall\,x\in\Sigma_{ij}.
 \end{aligned}
\end{equation}
\end{lemma}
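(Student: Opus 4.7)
The plan is to adapt the proof of Lemma \ref{mceig} (the $m=2$ case) verbatim to the multi-set setting: replace $1_{\Omega}$ by the signed combination $1_{\Omega_{i}}-1_{\Omega_{j}}$, replace $S$ by $S_{ij}$, replace $N$ by $N_{ij}$, and replace the translation identity of Lemma \ref{treig} by its multi-set version, Lemma \ref{treig2}. All four displayed steps in the $m=2$ proof then go through with essentially no change.

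First I would compute $\mathrm{div}\,\overline{\nabla}T_{\rho}(1_{\Omega_{i}}-1_{\Omega_{j}})(x)$ by differentiating twice under the integral sign as in equation \eqref{six3v}, applied separately to $1_{\Omega_{i}}$ and $1_{\Omega_{j}}$ and then subtracting, obtaining
\begin{equation*}
\mathrm{div}\,\overline{\nabla}T_{\rho}(1_{\Omega_{i}}-1_{\Omega_{j}})(x)=-\tfrac{\rho^{2}}{1-\rho^{2}}(1-\rho^{2})^{-(\adimn)/2}(2\pi)^{-(\adimn)/2}\Big(\!\int_{\Omega_{i}}-\int_{\Omega_{j}}\!\Big)\mathrm{div}_{y}\Big((y-\rho x)e^{-\frac{\vnorm{y-\rho x}^{2}}{2(1-\rho^{2})}}\Big)\,\d y.
\end{equation*}
Applying the divergence theorem to each of the two volume integrals, and using the convention (from the paragraph preceding \eqref{sdef2}) that inside $\partial\Omega_{i}$ and $\partial\Omega_{j}$ boundary integrals the symbol $N$ denotes the exterior normal to $\Omega_{i}$ and $\Omega_{j}$ respectively, the definition \eqref{sdef2} of $S_{ij}$ gives
\begin{equation*}
\mathrm{div}\,\overline{\nabla}T_{\rho}(1_{\Omega_{i}}-1_{\Omega_{j}})(x)=-\tfrac{\rho^{2}}{1-\rho^{2}}\,\big(S_{ij}(\langle\cdot,N\rangle)(x)-\rho\,\langle x,S_{ij}(N)(x)\rangle\big),
\end{equation*}
where $S_{ij}(N)(x)$ denotes the vector obtained by applying $S_{ij}$ coordinate-wise to the components of $N$.

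Next I would apply Lemma \ref{treig2} with $v=e_{1},\ldots,e_{\adimn}$ and take a linear combination with coefficients $x_{1},\ldots,x_{\adimn}$ to conclude
\begin{equation*}
\rho\,\langle x,S_{ij}(N)(x)\rangle=\langle x,N_{ij}(x)\rangle\,\vnorm{\overline{\nabla}T_{\rho}(1_{\Omega_{i}}-1_{\Omega_{j}})(x)},\qquad\forall\,x\in\Sigma_{ij}.
\end{equation*}
On the other hand, since $\mathrm{div}\,\overline{\nabla}=\overline{\Delta}$ on $\R^{\adimn}$, the Ornstein-Uhlenbeck identity \eqref{oup} applied to $T_{\rho}(1_{\Omega_{i}}-1_{\Omega_{j}})$, together with \eqref{nabeq3}, yields the alternative expression
\begin{equation*}
\mathrm{div}\,\overline{\nabla}T_{\rho}(1_{\Omega_{i}}-1_{\Omega_{j}})(x)=-\langle x,N_{ij}(x)\rangle\,\vnorm{\overline{\nabla}T_{\rho}(1_{\Omega_{i}}-1_{\Omega_{j}})(x)}-\rho\,\tfrac{\d}{\d\rho}T_{\rho}(1_{\Omega_{i}}-1_{\Omega_{j}})(x).
\end{equation*}

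Finally, I would equate the two expressions for $\mathrm{div}\,\overline{\nabla}T_{\rho}(1_{\Omega_{i}}-1_{\Omega_{j}})(x)$, substitute the consequence of Lemma \ref{treig2} into the first, multiply through by $-(1-\rho^{2})/\rho^{2}$, and rearrange using $(1-\rho^{2})/\rho^{2}=1/\rho^{2}-1$ to recover \eqref{six1z}. Structurally the proof is identical to the $m=2$ case; the only step requiring care is the bookkeeping of orientations produced by the divergence theorem, which is absorbed precisely by the signed-difference definition \eqref{sdef2} of $S_{ij}$. A priori finiteness of all integrals appearing above follows from the divergence-theorem argument of \cite[Remark 5.2]{heilman20d}, applied coordinate-wise to $1_{\Omega_{i}}-1_{\Omega_{j}}$.
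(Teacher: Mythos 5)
Your proposal follows the paper's proof of Lemma \ref{mceigp} step for step: both compute $\mathrm{div}\,\overline{\nabla}T_{\rho}(1_{\Omega_{i}}-1_{\Omega_{j}})$ once by differentiating under the integral sign and applying the divergence theorem to land on $S_{ij}$, convert the resulting $\rho\langle x,S_{ij}(N)(x)\rangle$ term via the translation almost-eigenfunction Lemma \ref{treig2}, compute the same divergence a second time via the Ornstein--Uhlenbeck identity \eqref{oup} and the collinearity \eqref{nabeq3}, and equate. This is the paper's argument, correctly reproduced.
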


\begin{proof}
From \eqref{nabeq3}, for all $1\leq i,j\leq m$,
\begin{equation}\label{firstvep}
\overline{\nabla}T_{\rho}(1_{\Omega_{i}}-1_{\Omega_{j}})(x)=-N_{ij}(x)\vnorm{\overline{\nabla}T_{\rho}(1_{\Omega_{i}}-1_{\Omega_{j}})(x)},\qquad\forall\,x\in\Sigma_{ij}.
\end{equation}
Taking the divergence of \eqref{firstvep}, for all $1\leq i,j\leq m$,
\begin{equation}\label{firstvev2}
\begin{aligned}
\mathrm{div}\overline{\nabla} T_{\rho}(1_{\Omega_{i}}-1_{\Omega_{j}})(x)
&=-\mathrm{div}(N_{ij}(x))\vnorm{\overline{\nabla}T_{\rho}(1_{\Omega_{i}}-1_{\Omega_{j}})(x)}\\
&\qquad\qquad\qquad-\langle N_{ij}(x),\overline{\nabla}\vnorm{\overline{\nabla} T_{\rho}(1_{\Omega_{i}}-1_{\Omega_{j}})(x)} \rangle.
\end{aligned}
\end{equation}
Taking the gradient and divergence of \eqref{oudef},
\begin{equation}\label{six3}
\begin{aligned}
&\mathrm{div}\overline{\nabla} T_{\rho}(1_{\Omega_{i}}-1_{\Omega_{j}})(x)
=(1-\rho^{2})^{-(\adimn)/2}(2\pi)^{-(\adimn)/2}\Big(\int_{\Omega_{i}}-\int_{\Omega_{j}}\Big)\mathrm{div}_{x}\overline{\nabla}_{x}e^{-\frac{\vnorm{y-\rho x}^{2}}{2(1-\rho^{2})}}\,\d y\\
&\qquad=(1-\rho^{2})^{-(\adimn)/2}(2\pi)^{-(\adimn)/2}\Big(\int_{\Omega_{i}}-\int_{\Omega_{j}}\Big)\mathrm{div}_{x}\Big(\rho\frac{ y-\rho x}{1-\rho^{2}} e^{-\frac{\vnorm{y-\rho x}^{2}}{2(1-\rho^{2})}}\Big)\,\d y\\
&\qquad=(1-\rho^{2})^{-(\adimn)/2}(2\pi)^{-(\adimn)/2}\Big(\int_{\Omega_{i}}-\int_{\Omega_{j}}\Big)\Big(\rho^{2}\frac{ \vnorm{y-\rho x}^{2}}{(1-\rho^{2})^{2}}-(\adimn)\frac{\rho^{2}}{1-\rho^{2}}\Big) e^{-\frac{\vnorm{y-\rho x}^{2}}{2(1-\rho^{2})}}\,\d y\\
&\qquad=(1-\rho^{2})^{-(\adimn)/2}(2\pi)^{-(\adimn)/2}\frac{-\rho^{2}}{1-\rho^{2}}\Big(\int_{\Omega_{i}}-\int_{\Omega_{j}}\Big)\mathrm{div}_{y}\Big((y-\rho x)e^{-\frac{\vnorm{y-\rho x}^{2}}{2(1-\rho^{2})}}\Big) \,\d y.
\end{aligned}
\end{equation}%
Applying the divergence theorem to the last equality in \eqref{six3},
\begin{equation}\label{six3t}
\begin{aligned}
&\mathrm{div}\overline{\nabla} T_{\rho}(1_{\Omega_{i}}-1_{\Omega_{j}})(x)\\
&\qquad=(1-\rho^{2})^{-(\adimn)/2}(2\pi)^{-(\adimn)/2}\frac{-\rho^{2}}{1-\rho^{2}}\Big(\int_{\partial\Omega_{i}}-\int_{\partial\Omega_{j}}\Big)\Big\langle (y-\rho x), N(y)\Big\rangle e^{-\frac{\vnorm{y-\rho x}^{2}}{2(1-\rho^{2})}} \,\d y\\
&\qquad\stackrel{\eqref{sdef2}}{=}-\frac{\rho^{2}}{1-\rho^{2}}\Big(S_{ij}(\langle \cdot, N\rangle)(x)  -\rho \langle x,S_{ij}(N)(x)\rangle\Big)\\
&\qquad\stackrel{\eqref{firstvep}\wedge \eqref{gre}}{=}-\frac{\rho^{2}}{1-\rho^{2}}\Big(S_{ij}(\langle \cdot, N\rangle)(x)  -\langle x,N_{ij}(x)\rangle\vnorm{\overline{\nabla} T_{\rho}(1_{\Omega_{i}}-1_{\Omega_{j}})}\Big).
\end{aligned}
\end{equation}
This equation and \eqref{firstvev2} proves \eqref{six1z}, together with
\begin{flalign*}
&\mathrm{div}\overline{\nabla} T_{\rho}(1_{\Omega_{i}}-1_{\Omega_{j}})(x)\\
&\qquad\qquad=\overline{\Delta}T_{\rho}(1_{\Omega_{i}}-1_{\Omega_{j}})(x)
-\langle x,\nabla T_{\rho}(1_{\Omega_{i}}-1_{\Omega_{j}})(x)\rangle
+\langle x,\overline{\nabla} T_{\rho}(1_{\Omega_{i}}-1_{\Omega_{j}})(x)\rangle\\
&\qquad\qquad\stackrel{\eqref{oup}\wedge\eqref{firstvep}}{=}-\rho\frac{d}{d\rho}T_{\rho}(1_{\Omega_{i}}-1_{\Omega_{j}})(x)-\langle x,N_{ij}(x)\rangle\vnormf{\overline{\nabla} T_{\rho}(1_{\Omega_{i}}-1_{\Omega_{j}})(x)}.
\end{flalign*}

\end{proof}

\begin{remark}\label{drk}
To justify the use of the divergence theorem in \eqref{six3t}, let $r>0$ and note that we can differentiate under the integral sign of  $T_{\rho}1_{\Omega\cap B(0,r)}(x)$ to get
\begin{equation}\label{grep}
\begin{aligned}
\overline{\nabla} T_{\rho}1_{\Omega\cap B(0,r)}(x)
&=(1-\rho^{2})^{-(\adimn)/2}(2\pi)^{-(\adimn)/2}\int_{\Omega\cap B(0,r)} \overline{\nabla}_{x}e^{-\frac{\vnorm{y-\rho x}^{2}}{2(1-\rho^{2})}}\,\d y\\
&=(1-\rho^{2})^{-(\adimn)/2}(2\pi)^{-(\adimn)/2}\frac{\rho}{1-\rho^{2}}\int_{\Omega\cap B(0,r)}(y-\rho x) e^{-\frac{\vnorm{y-\rho x}^{2}}{2(1-\rho^{2})}}\,\d y\\
\end{aligned}
\end{equation}
\begin{equation}\label{grepv2}
\begin{aligned}
&\mathrm{div}\overline{\nabla} T_{\rho}1_{\Omega\cap B(0,r)}(x)\\
&\quad=(1-\rho^{2})^{-(\adimn)/2}(2\pi)^{-(\adimn)/2}\int_{\Omega\cap B(0,r)}
\Big(\rho^{2}\frac{ \vnorm{y-\rho x}^{2}}{(1-\rho^{2})^{2}}-(\adimn)\frac{\rho^{2}}{1-\rho^{2}}\Big) e^{-\frac{\vnorm{y-\rho x}^{2}}{2(1-\rho^{2})}}\,\d y\\
&\quad=(1-\rho^{2})^{-(\adimn)/2}(2\pi)^{-(\adimn)/2}\int_{\Omega\cap B(0,r)}
\mathrm{div}_{y}\Big((y-\rho x)e^{-\frac{\vnorm{y-\rho x}^{2}}{2(1-\rho^{2})}}\Big) \,\d y\\
&\quad=(1-\rho^{2})^{-(\adimn)/2}(2\pi)^{-(\adimn)/2}\int_{[\Sigma\cap B(0,r)]\cup[\Omega\cap\partial B(0,r)]}
\langle (y-\rho x),N(y)\rangle e^{-\frac{\vnorm{y-\rho x}^{2}}{2(1-\rho^{2})}}\Big) \,\d y.
\end{aligned}
\end{equation}

Fix $r'>0$.  Fix $x\in\R^{\adimn}$ with $\vnorm{x}<r'$.  The last integral in \eqref{grep} over $\Omega\cap \partial B(0,r)$ goes to zero as $r\to\infty$ uniformly over all such $\vnorm{x}<r'$.  Also
$\mathrm{div}\overline{\nabla} T_{\rho}1_{\Omega}(x)$
exists a priori for all $x\in\R^{\adimn}$, while
\begin{flalign*}
&\abs{\mathrm{div}\overline{\nabla} T_{\rho}1_{\Omega}(x)-\mathrm{div}\overline{\nabla} T_{\rho}1_{\Omega\cap B(0,r)}(x)}\\
&\stackrel{\eqref{grepv2}}{=}(1-\rho^{2})^{-(\adimn)/2}(2\pi)^{-(\adimn)/2}\abs{\int_{\Omega\cap B(0,r)^{c}}
\Big(\rho^{2}\frac{ \vnorm{y-\rho x}^{2}}{(1-\rho^{2})^{2}}-(\adimn)\frac{\rho^{2}}{1-\rho^{2}}\Big) e^{-\frac{\vnorm{y-\rho x}^{2}}{2(1-\rho^{2})}}\,\d y}\\
&\qquad
\leq (1-\rho^{2})^{-(\adimn)/2}(2\pi)^{-(\adimn)/2}\int_{B(0,r)^{c}}
\Big(\rho^{2}\frac{ \vnorm{y-\rho x}^{2}}{(1-\rho^{2})^{2}}+(\adimn)\frac{\rho^{2}}{1-\rho^{2}}\Big) e^{-\frac{\vnorm{y-\rho x}^{2}}{2(1-\rho^{2})}}\,\d y.
\end{flalign*}
And the last integral goes to zero as $r\to\infty$, uniformly over all $\vnorm{x}<r'$.
\end{remark}

\section{Symmetric Sets}

\begin{proof}[Proof of Theorem \ref{symthm}]
Consider the functions $f,g\colon\Sigma\to\R$ defined by
$$g(x)\colonequals \langle x,N(x)\rangle,\qquad\forall\,x\in\Sigma.$$
$$f(x)\colonequals (x_{\adimn}^{2}-1)g(x),\qquad\forall\,x\in\Sigma.$$
Note that $\langle x,N(x)\rangle$ is constant as $x_{\adimn}$ varies, and $(x_{\adimn}^{2}-1)$ is constant as $x_{1},\ldots,x_{\sdimn}$ varies.  We can therefore apply Fubini's Theorem when we compute $S(f)$, and also note that $\int_{\Sigma}f(x)\gamma_{\adimn}(x)\,\d x=0$ since $\int_{\R}(x_{\adimn}^{2}-1)\gamma_{1}(x_{\adimn})\,\d x_{\adimn}=0$.  From \eqref{sdef}, and using the well-known property of Hermite polynomials on the real line that $T_{\rho}(x_{1}^{2}-1)=\rho^{2}(x_{1}^{2}-1)$ for all $x_{1}\in\R$, we have
$$S(f)(x)=\rho^{2}(x_{\adimn}^{2}-1) S(g)(x),\qquad\forall\,x\in\Sigma.$$
From Lemma \ref{mceig},
\begin{equation}\label{seven1}
S(g)(x)=\frac{1}{\rho^{2}}g(x)\vnormf{\overline{\nabla}T_{\rho}1_{\Omega}(x)}+\rho(1-\rho^{2})\frac{\d}{\d\rho}T_{\rho}(1_{\Omega})(x),\qquad\forall\,x\in\Sigma.
\end{equation}%
Plugging this into Lemma \ref{lemma7p}, and using also $\int_{\R}(x_{\adimn}^{2}-1)^{2}\gamma_{1}(x_{\adimn})\,\d x_{\adimn}=1$ and that $\vnormf{\overline{\nabla}T_{\rho}1_{\Omega}(x)}$ is constant as $x_{\adimn}$ varies, the variation of $\{\Omega^{(s)}\}_{s\in(-1,1)}$ corresponding to this choice of $f$ satisfies
\begin{equation}\label{seven2}
\begin{aligned}
&\frac{1}{2}\frac{\d^{2}}{\d s^{2}}\Big|_{s=0}\int_{\R^{\adimn}}1_{\Omega^{(s)}}(x)T_{\rho}1_{\Omega^{(s)}}(x)\gamma_{\adimn}(x)\,\d x\\
&\qquad=\int_{\Sigma}\Big(f(x)S(f)(x)-\abs{f(x)}^{2}\vnormf{\overline{\nabla}T_{\rho}1_{\Omega}(x)})\Big)\gamma_{\adimn}(x)\,\d x\\
&\qquad=\int_{\Sigma}\Big(\rho^{2}g(x)S(g)(x)-\abs{g(x)}^{2}\vnormf{\overline{\nabla}T_{\rho}1_{\Omega}(x)})\Big)\gamma_{\adimn}(x)\,\d x\\
&\qquad\stackrel{\eqref{seven1}}{=}\rho^{3}(1-\rho^{2})\int_{\Sigma}\frac{\d}{\d\rho}T_{\rho}(1_{\Omega})(x)\langle x,N(x)\rangle\gamma_{\adimn}(x)\,\d x\geq0.
\end{aligned}
\end{equation}
The last inequality follows by assumption.  Now, consider the function
$$\widetilde{f}(x)\colonequals (x_{\adimn}^{2}-1)\abs{\langle x,N(x)\rangle},\qquad\forall\,x\in\Sigma.$$
If $\Omega$ is not star shaped, then since $G(x,y)>0$ for all $x,y\in\R^{\adimn}$, we have a strict inequality
\begin{equation}\label{seven3}
\begin{aligned}
\int_{\Sigma}\abs{g(x)}S(\abs{g})(x)\gamma_{\adimn}(x)\,\d x
&\stackrel{\eqref{sdef}}{=}\int_{\Sigma}\int_{\Sigma}\abs{g(x)} G(x,y)\abs{g(y)} \, \d x \d y\\
&>\int_{\Sigma}\int_{\Sigma}g(x) G(x,y)g(y) \, \d x \d y
\stackrel{\eqref{sdef}}{=}\int_{\Sigma}gS(g)(x)\gamma_{\adimn}(x)\,\d x.
\end{aligned}
\end{equation}
So, the variation of $\{\Omega^{(s)}\}_{s\in(-1,1)}$ corresponding to this choice of $\widetilde{f}$ satisfies,
\begin{flalign*}
&\frac{1}{2}\frac{\d^{2}}{\d s^{2}}\Big|_{s=0}\int_{\R^{\adimn}}1_{\Omega^{(s)}}(x)T_{\rho}1_{\Omega^{(s)}}(x)\gamma_{\adimn}(x)\,\d x\\
&\qquad=\int_{\Sigma}\Big(\widetilde{f}(x)S(\widetilde{f})(x)-\absf{\widetilde{f}(x)}^{2}\vnormf{\overline{\nabla}T_{\rho}1_{\Omega}(x)}\Big)\gamma_{\adimn}(x)\,\d x\\
&\qquad=\int_{\Sigma}\Big(\rho^{2}\abs{g(x)}S(\abs{g})(x)-\abs{g(x)}^{2}\vnormf{\overline{\nabla}T_{\rho}1_{\Omega}(x)}\Big)\gamma_{\adimn}(x)\,\d x\\
&\qquad\stackrel{\eqref{seven3}}{>}\int_{\Sigma}\Big(\rho^{2}g(x)S(g)(x)-\abs{g(x)}^{2}\vnormf{\overline{\nabla}T_{\rho}1_{\Omega}(x)}\Big)\gamma_{\adimn}(x)\,\d x
\stackrel{\eqref{seven2}}{\geq}0.
\end{flalign*}
Since we have found a variation $\widetilde{f}$ with positive second derivative, $\int_{\Sigma}\widetilde{f}(x)\gamma_{\adimn}(x)\,\d x=0$, and $\widetilde{f}(-x)=\widetilde{f}(x)$ for all $x\in\Sigma$, we have arrived at a contradiction.  We conclude that $\Omega$ or $\Omega^{c}$ is star-shaped.
\end{proof}
%
%
%
%
%

%
\section{Negative Correlation, Dimension Reduction}\label{negsec}

In this section, we consider the case that $\rho<0$ in Problem \ref{prob2}.  When $\rho<0$ and $h\colon\R^{\adimn}\to[-1,1]$ is measurable, then quantity
$$\int_{\R^{\adimn}}h(x)T_{\rho}h(x)\gamma_{\adimn}(x)\,\d x$$
could be negative, so a few parts of the above argument do not work, namely the existence Lemma \ref{existlem}.  We therefore replace the noise stability with a more general bilinear expression, guaranteeing existence of the corresponding problem.  The remaining parts of the argument are essentially identical, mutatis mutandis.  We indicate below where the arguments differ in the bilinear case.

When $\rho<0$, we look for a minimum of noise stability, rather than a maximum.  Correspondingly, we expect that the plurality function minimizes noise stability when $\rho<0$.  If $\rho<0$, then \eqref{oudef} implies that
$$\int_{\R^{\adimn}}h(x)T_{\rho}h(x)\gamma_{\adimn}(x)\,\d x
=\int_{\R^{\adimn}}h(x)T_{(-\rho)}h(-x)\gamma_{\adimn}(x)\,\d x.$$
So, in order to understand the minimum of noise stability for negative correlations, it suffices to consider the following bilinear version of the standard simplex problem with positive correlation.

\begin{prob}[\embolden{Standard Simplex Problem, Bilinear Version, Positive Correlation, Equal Measure Constraint}]\label{prob2n}
Let $m\geq3$.  Fix $0<\rho<1$.  Find measurable sets $\Omega_{1},\ldots\Omega_{m},\Omega_{1}',\ldots\Omega_{m}'\subset\R^{\adimn}$ with $\cup_{i=1}^{m}\Omega_{i}=\cup_{i=1}^{m}\Omega_{i}'=\R^{\adimn}$ and $\gamma_{\adimn}(\Omega_{i})=\gamma_{\adimn}(\Omega_{i}')$ for all $1\leq i\leq m$ that minimize
$$\sum_{i=1}^{m}\int_{\R^{\adimn}}1_{\Omega_{i}}(x)T_{\rho}1_{\Omega_{i}'}(x)\gamma_{\adimn}(x)\,\d x,$$
subject to the above constraints.
\end{prob}

We remark that even the case $m=2$ of this bilinear problem is interesting, as it played a role in \cite{chakrabarti10} in proving an optimal lower bound for the Gap-Hamming-Distance problem in communication complexity.  However, the problem relevant to \cite{chakrabarti10} also requires an extra symmetry assumption.

\begin{conj}[\embolden{Standard Simplex Conjecture, Bilinear Version, Positive Correlation} {\cite{isaksson11}}]\label{conj2n}
Let $\Omega_{1},\ldots\Omega_{m},\Omega_{1}',\ldots\Omega_{m}'\subset\R^{\adimn}$ minimize Problem \ref{prob2}.  Assume that $m-1\leq\adimn$.  Fix $0<\rho<1$.  Let $z_{1},\ldots,z_{m}\in\R^{\adimn}$ be the vertices of a regular simplex in $\R^{\adimn}$ centered at the origin.  Then, for all $1\leq i\leq m$,
$$\Omega_{i}=-\Omega_{i}'=\{x\in\R^{\adimn}\colon\langle x,z_{i}\rangle=\max_{1\leq j\leq m}\langle x,z_{j}\rangle\}.$$
\end{conj}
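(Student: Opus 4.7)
The plan is to mirror the (conditional) strategy used for Theorem \ref{main1}, adapted to the bilinear setting of Problem \ref{prob2n}. Based on the parallel restrictions in Theorems \ref{main1}--\ref{main2} and the remark on $m\geq 5$, I expect the proof to yield the conjecture only conditionally on a bilinear analogue of the hyperstability hypothesis from Definition \ref{hyppart}, and only in the cases $m\in\{3,4\}$; this is the content of the bilinear result Theorem \ref{main16} advertised in the introduction.

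First I would establish existence and regularity of a minimizer of Problem \ref{prob2n}, in parallel with Lemmas \ref{existlem} and \ref{reglem}. Because $T_\rho$ has a positive kernel for $\rho\in(0,1)$, the bilinear functional is bounded below by $0$; standard lower semicontinuity together with Gaussian tightness then yield a minimizer, and the usual quasi-minimality arguments for Gaussian perimeter-type functionals give locally finite $C^\infty$ boundaries for the optimal $\Omega_i$ and $\Omega_i'$. A first variation performed separately in $\Omega_i$ and in $\Omega_i'$, together with Lagrange multipliers for the equal-measure constraint, yields the coupled Euler--Lagrange system: on each interface $\Sigma_{ij}\colonequals\partial^{*}\Omega_i\cap\partial^{*}\Omega_j$ there exists $c_{ij}\in\R$ with $T_\rho(1_{\Omega_i'}-1_{\Omega_j'})(x)=c_{ij}$ for all $x\in\Sigma_{ij}$, and symmetrically on each $\Sigma_{ij}'\colonequals\partial^{*}\Omega_i'\cap\partial^{*}\Omega_j'$.

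Next I would derive the bilinear analogues of the key almost-eigenfunction identities in Lemmas \ref{treig2} and \ref{mceigp}. The divergence-theorem calculations from \eqref{gre} and \eqref{six3t} carry over essentially verbatim, with the kernel $G$ now linking $\Sigma_{ij}$ to boundary pieces of $\Omega_i',\Omega_j'$ rather than to those of $\Omega_i,\Omega_j$. This exhibits the translations $x\mapsto\langle v,N_{ij}(x)\rangle$ and dilations $x\mapsto\langle x,N_{ij}(x)\rangle$ as almost-eigenfunctions of the bilinear second variation operator, with analogues of the eigenvalues $1/\rho$ and $1/\rho^{2}$ respectively (modulo a $\d/\d\rho$ correction in the dilation case, as in \eqref{six1z}). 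Invoking the bilinear hyperstability hypothesis kills precisely this correction term, so that translations and dilations furnish a family of admissible volume-preserving perturbations along which the second variation is explicitly controlled.

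Finally, the second-variation inequality coming from hyperstability, combined with the sign of the translation second variation given by the analogue of Lemma \ref{keylem}, forces the optimum partition to be dilation-invariant; a classification of dilation-invariant hyperstable critical partitions then identifies them with simplicial cones as in \eqref{wdef}. The symmetry $\Omega_i'=-\Omega_i$ falls out either from the reflection identity
\begin{equation*}
\int_{\R^{\adimn}}h(x)T_{\rho}h(x)\gamma_{\adimn}(x)\,\d x=\int_{\R^{\adimn}}h(x)T_{-\rho}h(-x)\gamma_{\adimn}(x)\,\d x
\end{equation*}
noted at the start of Section \ref{negsec}, or directly from uniqueness of the critical configuration on each side of the bilinear Euler--Lagrange system. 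The main obstacle is this last classification step: for $m\geq 5$ one does not presently know how to classify the dilation-invariant hyperstable (bilinear) critical points, which is exactly why the conditional conjecture is expected to be resolvable only for $m=3$ and $m=4$.
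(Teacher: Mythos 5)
You correctly recognize that the statement is a conjecture the paper does not establish unconditionally: what the paper actually proves is the conditional Theorem \ref{main16}, under the bilinear hyperstability hypothesis and only for $m\in\{3,4\}$, and your outline matches the paper's route precisely (existence and regularity via Lemmas \ref{existlemn} and \ref{reglemn}, the coupled first-variation conditions of Lemma \ref{firstvarmaxnsn}, translation and dilation almost-eigenfunctions via Lemmas \ref{treig2n} and \ref{mceign}, dilation invariance from hyperstability via Lemma \ref{lemma50}, and the $m\leq 4$ classification in Lemma \ref{lemma51}). One small imprecision worth flagging: in the paper, the sign analysis of the translation second variation (Lemma \ref{keylemn}) drives the \emph{dimension reduction} to $\R^{m-1}$ (Theorem \ref{mainthm1n}), whereas \emph{dilation invariance} comes separately from the dilation almost-eigenfunction together with the hyperstability assumption (Lemma \ref{lemma50}); both steps are needed before the classification Lemma \ref{lemma51} can be applied, but they are logically distinct rather than a single second-variation inequality.
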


Since we consider a bilinear version of noise stability in Problem \ref{prob2n}, existence of an optimizer is easier than in Problem \ref{prob2}.

\begin{lemma}[\embolden{Existence of a Minimizer}]\label{existlemn}
Let $0<\rho<1$ and let $m\geq2$.  Then there exist measurable sets $\Omega_{1},\ldots\Omega_{m},\Omega_{1}',\ldots\Omega_{m}'$ that minimize Problem \ref{prob2n}.
\end{lemma}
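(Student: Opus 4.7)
The plan is to run a weak-$L^2$ compactness argument for the bilinear problem, and then exploit bilinearity to pass from a relaxed minimizer to an honest partition minimizer. Throughout I work with tuples $(f,g) = (f_1,\ldots,f_m,g_1,\ldots,g_m)$ of measurable maps $\R^{\adimn}\to[0,1]$ satisfying $\sum_i f_i = \sum_i g_i = 1$ a.e., under the identification $f_i = 1_{\Omega_i}$ and $g_i = 1_{\Omega_i'}$. Write $B(f,g) \colonequals \sum_{i=1}^m \int_{\R^{\adimn}} f_i\, T_\rho g_i\, \d\gamma_{\adimn}$, and note that $0\le B\le 1$, so the infimum is finite.

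First I would pick a partition-pair minimizing sequence $(f^{(n)},g^{(n)})$ with $\int f_i^{(n)}\,\d\gamma_{\adimn} = \int g_i^{(n)}\,\d\gamma_{\adimn}$ for all $i$. The tuples are bounded in $L^2(\gamma_{\adimn})^{2m}$, so a diagonal subsequence converges weakly to some $(f^\ast,g^\ast)$. The limits are $[0,1]$-valued (the constraint is convex and weakly closed), and testing against $1\in L^2(\gamma_{\adimn})$ and against arbitrary $\phi\in L^2(\gamma_{\adimn})$ preserves $\sum_i f^\ast_i = \sum_i g^\ast_i = 1$ a.e.\ and $\int f^\ast_i\,\d\gamma_{\adimn} = \int g^\ast_i\,\d\gamma_{\adimn}$. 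The crucial input is compactness of $T_\rho$ on $L^2(\gamma_{\adimn})$, which follows because its spectrum is $\{\rho^k\}_{k\ge 0}$ with finite-dimensional Hermite eigenspaces. Thus $T_\rho g^{(n)}_i\to T_\rho g^\ast_i$ strongly in $L^2$, and pairing strong with weak convergence gives $B(f^{(n)},g^{(n)})\to B(f^\ast,g^\ast)$, so $(f^\ast,g^\ast)$ realizes the infimum over the convex relaxation.

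Next I would bang-bang back to an actual partition pair. With $g^\ast$ fixed, the map $f\mapsto B(f,g^\ast)$ is weakly continuous and linear on the convex, weakly compact set $K_1 \colonequals \{f\colon\R^{\adimn}\to\Delta_m : \int f_i\,\d\gamma_{\adimn} = \int g^\ast_i\,\d\gamma_{\adimn},\,1\leq i\leq m\}$. By Bauer's extremum principle it attains its minimum at an extreme point $\tilde f$ of $K_1$. I then need to show that the extreme points of $K_1$ are exactly the vertex-valued maps, i.e.\ partitions with the prescribed Gaussian measures. Given a non-vertex-valued $f\in K_1$, I would locate a positive-measure set $E$ and indices $i\neq j$ with $f_i,f_j\in(0,1)$ throughout $E$, split $E=E_1\sqcup E_2$ into pieces of equal Gaussian measure, and perturb $f_i\mapsto f_i\pm\varepsilon(1_{E_1}-1_{E_2})$ and $f_j\mapsto f_j\mp\varepsilon(1_{E_1}-1_{E_2})$ for small $\varepsilon>0$ to certify non-extremality, all constraints being preserved. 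Repeating the argument with $\tilde f$ fixed yields a partition $\tilde g$ satisfying $B(\tilde f,\tilde g)\leq B(\tilde f,g^\ast)\leq B(f^\ast,g^\ast)$, and by construction the matching-measure constraint $\int\tilde f_i\,\d\gamma_{\adimn} = \int\tilde g_i\,\d\gamma_{\adimn}$ holds, giving the desired partition minimizer.

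I expect the main obstacle to be the extreme-point characterization of $K_1$ in the presence of the $m$ equality constraints $\int f_i\,\d\gamma_{\adimn} = c_i$: without such constraints, vertex-valuedness of extreme points of $\Delta_m$-valued maps is a textbook fact, but with the constraints one must exhibit the explicit measure-preserving bang-bang move above. Compactness of $T_\rho$ is likewise essential, since without it $B$ would be only separately (not jointly) weakly continuous, and the relaxed infimum might fail to be attained in the weak limit.
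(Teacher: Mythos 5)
Your proposal is correct and follows essentially the same route as the paper: relax to $\Delta_m$-valued functions, use weak compactness of the constraint set and weak (joint) continuity of the bilinear functional (via compactness of $T_\rho$) to produce a relaxed minimizer, then invoke bilinearity to push the minimizer to an extreme point of the constraint set and identify extreme points with honest partitions. You supply more detail than the paper's terse proof — in particular, the explicit compactness argument for $T_\rho$ and the measure-preserving bang-bang perturbation certifying that non-vertex-valued maps are not extreme — but the underlying strategy is identical.
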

\begin{proof}
Define $\Delta_{m}$ as in \eqref{deltadef}.  Let $f,g\colon\R^{\adimn}\to\Delta_{m}$.  The set $D_{0}\colonequals\{(f,g)\colon\quad f,g\colon\R^{\adimn}\to\Delta_{m},\, \int_{\R^{\adimn}}f(x)\gamma_{\adimn}\,\d x=\int_{\R^{\adimn}}g(x)\gamma_{\adimn}\,\d x\}$ is norm closed, bounded, and convex, therefore it is weakly compact and convex.  Consider the function
$$C(f,g)\colonequals\sum_{i=1}^{m}\int_{\R^{\adimn}}f_{i}(x)T_{\rho}g_{i}(x)\gamma_{\adimn}(x)\,\d x.$$
This function is weakly continuous on $D_{0}$, and $D_{0}$ is weakly compact, so there exists $(\widetilde{f},\widetilde{g})\in D_{0}$ such that $C(\widetilde{f},\widetilde{g})=\min_{f,g\in D_{0}}C(f,g)$. Since $C$ is bilinear and $D_{0}$ is convex, the minimum of $C$ must be achieved at an extreme point of $D_{0}$.  (If e.g. $g$ is fixed, then $f\mapsto C(f,g)$ is linear in $f$, and $\{f\colon(f,g)\in D_{0}\}$ is a convex set.)  Let $e_{1},\ldots,e_{m}$ denote the standard basis of $\R^{m}$, so that $f,g$ takes their values in $\{e_{1},\ldots,e_{m}\}$. Then, for any $1\leq i\leq m$, define $\Omega_{i}\colonequals\{x\in\R^{\adimn}\colon f(x)=e_{i}\}$ and $\Omega_{i}'\colonequals\{x\in\R^{\adimn}\colon g(x)=e_{i}\}$.  Note that $f_{i}=1_{\Omega_{i}}$ and $g_{i}=1_{\Omega_{i}'}$ for all $1\leq i\leq m$.
\end{proof}

\begin{lemma}[\embolden{Regularity of a Minimizer}]\label{reglemn}
Let $\Omega_{1},\ldots,\Omega_{m},\Omega_{1}',\ldots,\Omega_{m}'\subset\R^{\adimn}$ be the measurable sets minimizing Problem \ref{prob2}, guaranteed to exist by Lemma \ref{existlemn}.  Then the sets $\Omega_{1},\ldots,\Omega_{m},\Omega_{1}',\ldots,\Omega_{m}'$ have locally finite surface area.  Moreover, for all $1\leq i\leq m$ and for all $x\in\partial\Omega_{i}$, there exists a neighborhood $U$ of $x$ such that $U\cap \partial\Omega_{i}$ is a finite union of $C^{\infty}$ $\sdimn$-dimensional manifolds.  The same holds for $\Omega_{1}',\ldots,\Omega_{m}'$.
\end{lemma}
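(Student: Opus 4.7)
The plan is to adapt the proof of Lemma \ref{reglem} to the bilinear setting, exploiting the fact that the cost in Problem \ref{prob2n} is linear in each of the two partitions separately when the other is held fixed. This allows us to avoid any self-referential argmax characterization.

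First, I would fix a minimizer $(\{\Omega_i\}_{i=1}^m, \{\Omega_i'\}_{i=1}^m)$ guaranteed by Lemma \ref{existlemn}, and observe that for $\{\Omega_i'\}$ held fixed, the partition $\{\Omega_i\}$ minimizes the \emph{linear} functional
$$\sum_{i=1}^m \int_{\R^{\adimn}} 1_{\Omega_i}(x)\, T_\rho 1_{\Omega_i'}(x)\, \gamma_{\adimn}(x)\,\d x$$
over all partitions subject to the volume constraints $\gamma_{\adimn}(\Omega_i) = \gamma_{\adimn}(\Omega_i')$, where the right-hand side is a fixed constant for this subproblem. A standard exchange argument (swapping infinitesimal mass between $\Omega_i$ and $\Omega_j$ to preserve both measure constraints) then produces Lagrange multipliers $\lambda_1, \ldots, \lambda_m \in \R$ such that, up to a Lebesgue null set,
$$\Omega_i = \bigl\{ x \in \R^{\adimn} \colon T_\rho 1_{\Omega_i'}(x) - \lambda_i \leq T_\rho 1_{\Omega_j'}(x) - \lambda_j,\ \forall\, 1 \leq j \leq m \bigr\}, \quad 1 \leq i \leq m.$$

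Second, from the integral formula \eqref{oudef}, each function $h_i(x) \colonequals T_\rho 1_{\Omega_i'}(x)$ is real analytic on $\R^{\adimn}$, so each difference $h_i - h_j$ is real analytic and not identically constant whenever $\Omega_i'$ and $\Omega_j'$ both have positive measure and positive-measure symmetric difference. The level set $\{h_i - h_j = \lambda_i - \lambda_j\}$ is therefore a locally finite union of real analytic (and hence $C^\infty$) $\sdimn$-dimensional submanifolds of $\R^{\adimn}$ away from a relatively closed singular set of Hausdorff dimension at most $\sdimn-1$. Since $\partial\Omega_i$ lies in the union of these level sets over $j \neq i$, we conclude that $\partial\Omega_i$ is locally a finite union of $C^\infty$ $\sdimn$-dimensional manifolds and that $\Omega_i$ has locally finite surface area. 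By the symmetry of the bilinear functional between $\{\Omega_i\}$ and $\{\Omega_i'\}$, the identical argument applied with $\{\Omega_i\}$ held fixed yields the same conclusion for $\{\Omega_i'\}$.

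The main technical obstacle is the rigorous justification of the Lagrange multipliers, which is the same difficulty faced in the proof of Lemma \ref{reglem}. Fortunately, in the bilinear setting the constraint $\gamma_{\adimn}(\Omega_i) = \gamma_{\adimn}(\Omega_i')$ fully decouples once one of the two sides is frozen: the right-hand side becomes a constant, and the familiar existence of multipliers for linear optimization subject to finitely many equality constraints on Gaussian measure (as used in \cite{heilman14,heilman18}) applies directly. The remainder of the argument proceeds \textit{mutatis mutandis} from the proof of Lemma \ref{reglem}, delivering both locally finite surface area and the asserted $C^\infty$ decomposition of $\partial\Omega_i$ and $\partial\Omega_i'$.
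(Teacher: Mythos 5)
Your proposal is correct and follows the same strategy the paper intends, namely the approach of \cite{heilman14} and subsequent works: freeze one side of the bilinear functional so that the other side solves a linear optimization over partitions with fixed Gaussian measures, produce Lagrange multipliers by an exchange argument, characterize each $\Omega_i$ (up to a null set) as the region where $T_\rho 1_{\Omega_i'}(x) - \lambda_i$ is smallest, and then use the real analyticity of $T_\rho 1_{\Omega_j'}$ to conclude that $\partial\Omega_i$ is contained in a locally finite union of real analytic level sets. The paper itself does not spell out a proof of Lemma \ref{reglemn} (or of Lemma \ref{reglem}), stating only that the bilinear case follows \emph{mutatis mutandis}; your note that the constraint $\gamma_{\adimn}(\Omega_i) = \gamma_{\adimn}(\Omega_i')$ decouples completely once $\{\Omega_i'\}$ is frozen is exactly the observation that makes this transfer work. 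One small point worth making explicit: you should dispose of the degenerate case where $T_\rho(1_{\Omega_i'} - 1_{\Omega_j'})$ is constant (which forces $\Omega_i'$ or $\Omega_j'$ to be null or conull), and you should note that the level set $\{h_i - h_j = \lambda_i - \lambda_j\}$ may have a lower-dimensional singular stratum, which is why the conclusion of the lemma is phrased as a locally finite union of $C^\infty$ $\sdimn$-dimensional manifolds rather than a single smooth hypersurface.
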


We denote $\Sigma_{ij}\colonequals(\redb\Omega_{i})\cap(\redb\Omega_{j}), \Sigma_{ij}'\colonequals(\redb\Omega_{i}')\cap(\redb\Omega_{j}')$ for all $1\leq i<j\leq m$.

\begin{lemma}[\embolden{The First Variation for Minimizers}]\label{firstvarmaxnsn}
Suppose $\Omega_{1},\ldots,\Omega_{m},\Omega_{1}',\ldots,\Omega_{m}'\subset\R^{\adimn}$ minimize Problem \ref{prob2n}.  Then for all $1\leq i<j\leq m$, there exists $c_{ij},c_{ij}'\in\R$ such that
$$T_{\rho}(1_{\Omega_{i}}-1_{\Omega_{j}})(x)=c_{ij},\qquad\forall\,x\in\Sigma_{ij}'.$$
$$T_{\rho}(1_{\Omega_{i}'}-1_{\Omega_{j}'})(x)=c_{ij}',\qquad\forall\,x\in\Sigma_{ij}.$$
\end{lemma}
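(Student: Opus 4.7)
The plan is to run the first-variation-with-Lagrange-multipliers argument twice --- once in each slot of the bilinear functional --- exploiting the fact that fixing one of the two partitions makes the objective linear in the other.

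First I would hold $\{\Omega_i'\}_{i=1}^m$ fixed and consider the variation $\{\Omega_i^{(s)}\}_{i=1}^m$ of $\{\Omega_i\}_{i=1}^m$ corresponding to some $X\in C_0^\infty(\R^{\adimn},\R^{\adimn})$. Writing $f_{ij}(x):=\langle X(x),N_{ij}(x)\rangle$ on $\Sigma_{ij}$ and using $\gamma_{\adimn}(x)T_\rho 1_{\Omega_i'}(x)=\int G(x,y)1_{\Omega_i'}(y)\,\d y$ from \eqref{gdef}, Lemma \ref{latelemma3} applied in each summand and combined across the interfaces $\Sigma_{ij}$ using $N_{ij}=-N_{ji}$ gives
\begin{equation*}
\frac{\d}{\d s}\Big|_{s=0}\sum_{i=1}^{m}\int_{\R^{\adimn}}1_{\Omega_i^{(s)}}(x)T_\rho 1_{\Omega_i'}(x)\gamma_{\adimn}(x)\,\d x=\sum_{1\le i<j\le m}\int_{\Sigma_{ij}}T_\rho(1_{\Omega_i'}-1_{\Omega_j'})(x)f_{ij}(x)\gamma_{\adimn}(x)\,\d x.
\end{equation*}
Minimality of $\{\Omega_i,\Omega_i'\}$ forces this derivative to vanish whenever the variation preserves $\gamma_{\adimn}(\Omega_i)$ for every $i$; by Lemma \ref{lemma27}, the admissible data $(f_{ij})_{i<j}$ are precisely those obeying $\sum_{j\ne i}\int_{\Sigma_{ij}}f_{ij}(x)\gamma_{\adimn}(x)\,\d x=0$ for each $1\le i\le m$.

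A standard Lagrange-multiplier argument (test with localized $f_{ij}$ and cancel the $m$ linear volume constraints pairwise) then produces constants $\mu_1,\dots,\mu_m\in\R$ with $T_\rho(1_{\Omega_i'}-1_{\Omega_j'})(x)=\mu_i-\mu_j$ on each $\Sigma_{ij}$, so the second identity follows with $c_{ij}':=\mu_i-\mu_j$. For the first identity I would rewrite the functional as $\sum_i\int T_\rho 1_{\Omega_i}(x)\cdot 1_{\Omega_i'}(x)\gamma_{\adimn}(x)\,\d x$ using self-adjointness of $T_\rho$ with respect to $\gamma_{\adimn}$ (equivalently, symmetry of $G$), then repeat the argument verbatim with the roles of $\{\Omega_i\}$ and $\{\Omega_i'\}$ swapped; this yields $T_\rho(1_{\Omega_i}-1_{\Omega_j})(x)=c_{ij}$ on $\Sigma_{ij}'$.

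The main obstacle is really bookkeeping rather than analysis: sign conventions must be tracked carefully (since $N_{ij}=-N_{ji}$) so that the Lagrange multipliers assemble coherently, and the mild regularity needed to apply Lemma \ref{latelemma3} must be checked --- namely that $\partial\Omega_i$ and $\partial\Omega_i'$ are locally finite unions of $C^\infty$ manifolds (supplied by Lemma \ref{reglemn}), and that the coefficients $T_\rho 1_{\Omega_i'}$ and $T_\rho 1_{\Omega_i}$ are smooth enough to serve as test coefficients (immediate from smoothness of the Ornstein--Uhlenbeck kernel). No genuinely new analytic difficulty arises relative to the unilinear analogue Lemma \ref{firstvarmaxns}.
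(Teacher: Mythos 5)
Your proposal is correct and matches the approach the paper implicitly intends: it is the bilinear analogue of the (also unproved but cited) Lemma \ref{firstvarmaxns}, using the first-variation formula Lemma \ref{latelemma3}, the regularity Lemma \ref{reglemn}, the admissibility characterization Lemma \ref{lemma27}, and a Lagrange-multiplier argument, applied once in each slot of the bilinear functional with symmetry of the kernel $G$ justifying the role swap. The key observation you make --- that fixing one of the two partitions makes the objective linear in the other, so the first-variation argument from the unilinear case applies verbatim in each slot --- is exactly why the bilinear case introduces no new difficulty, and your bookkeeping of the sign convention $N_{ij}=-N_{ji}$ in assembling $c_{ij}'=\mu_i-\mu_j$ is correct.
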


We denote $N_{ij}(x)$ as the unit exterior normal vector to $\Sigma_{ij}$ for all $1\leq i<j\leq m$.  Also denote $N_{ij}'(x)$ as the unit exterior normal vector to $\Sigma_{ij}'$ for all $1\leq i<j\leq m$.  Let $\Omega_{1},\ldots,\Omega_{m},\Omega_{1}',\ldots,\Omega_{m}'\subset\R^{\adimn}$ be a partition of $\R^{\adimn}$ into measurable sets such that $\partial\Omega_{i},\partial\Omega_{i}'$ are a locally finite union of $C^{\infty}$ manifolds for all $1\leq i\leq m$.  Let $X,X'\in C_{0}^{\infty}(\R^{\adimn},\R^{\adimn})$.  Let $\{\Omega_{i}^{(s)}\}_{s\in(-1,1)}$ be the variation of $\Omega_{i}$ corresponding to $X$ for all $1\leq i\leq m$.  Let $\{\Omega_{i}^{'(s)}\}_{s\in(-1,1)}$ be the variation of $\Omega_{i}'$ corresponding to $X'$ for all $1\leq i\leq m$.  Denote $f_{ij}(x)\colonequals\langle X(x),N_{ij}(x)\rangle$ for all $x\in\Sigma_{ij}$ and $f_{ij}'(x)\colonequals\langle X'(x),N_{ij}'(x)\rangle$ for all $x\in\Sigma_{ij}'$.  We let $N$ denote the exterior pointing unit normal vector to $\redb\Omega_{i}$ for any $1\leq i\leq m$ and we let $N'$ denote the exterior pointing unit normal vector to $\redb\Omega_{i}'$ for any $1\leq i\leq m$.

\begin{lemma}[\embolden{Volume Preserving Second Variation of Minimizers, Multiple Sets}]\label{lemma7rn}
Let $\Omega_{1},\ldots,\Omega_{m},\Omega_{1}',\ldots,\Omega_{m}'\subset\R^{\adimn}$ be two partitions of $\R^{\adimn}$ into measurable sets such that $\partial\Omega_{i},\partial\Omega_{i}'$ are a locally finite union of $C^{\infty}$ manifolds for all $1\leq i\leq m$.  Then
\begin{equation}\label{four32pv2n}
\begin{aligned}
&\frac{\d^{2}}{\d s^{2}}\Big|_{s=0}\sum_{i=1}^{m}\int_{\R^{\adimn}} \int_{\R^{\adimn}} 1_{\Omega_{i}^{(s)}}(y)G(x,y) 1_{\Omega_{i}^{'(s)}}(x)\,\d x\d y\\
&\qquad\qquad\qquad=\sum_{1\leq i<j\leq m}\int_{\Sigma_{ij}'}\Big[\Big(\int_{\redb\Omega_{i}}-\int_{\redb\Omega_{j}}\Big)G(x,y)\langle X(y),N(y)\rangle \,\d y\Big] f_{ij}'(x) \,\d x\\
&\qquad\qquad\qquad\qquad+\sum_{1\leq i<j\leq m}\int_{\Sigma_{ij}}\Big[\Big(\int_{\redb\Omega_{i}'}-\int_{\redb\Omega_{j}'}\Big)G(x,y)\langle X'(y),N'(y)\rangle \,\d y\Big] f_{ij}(x) \,\d x\\
&\qquad\qquad\qquad\qquad\qquad+\int_{\Sigma_{ij}'}\vnormf{\overline{\nabla} T_{\rho}(1_{\Omega_{i}}-1_{\Omega_{j}})(x)}(f_{ij}'(x))^{2} \gamma_{\adimn}(x)\,\d x\\
&\qquad\qquad\qquad\qquad\qquad+\int_{\Sigma_{ij}}\vnormf{\overline{\nabla} T_{\rho}(1_{\Omega_{i}'}-1_{\Omega_{j}'})(x)}(f_{ij}(x))^{2} \gamma_{\adimn}(x)\,\d x.
\end{aligned}
\end{equation}
Also,
\begin{equation}\label{nabeq3n}
\begin{aligned}
\overline{\nabla}T_{\rho}(1_{\Omega_{i}}-1_{\Omega_{j}})(x)&=N_{ij}'(x)\vnormf{\overline{\nabla}T_{\rho}(1_{\Omega_{i}}-1_{\Omega_{j}})(x)},\qquad\forall\,x\in\Sigma_{ij}'.\\
\overline{\nabla}T_{\rho}(1_{\Omega_{i}'}-1_{\Omega_{j}'})(x)&=N_{ij}(x)\vnormf{\overline{\nabla}T_{\rho}(1_{\Omega_{i}'}-1_{\Omega_{j}'})(x)},\qquad\forall\,x\in\Sigma_{ij}.\\
\end{aligned}
\end{equation}
Moreover, $\vnormf{\overline{\nabla} T_{\rho}(1_{\Omega_{i}}-1_{\Omega_{j}})(x)}>0$ for all $x\in\Sigma_{ij}'$, except on a set of Hausdorff dimension at most $\sdimn-1$, and $\vnormf{\overline{\nabla} T_{\rho}(1_{\Omega_{i}'}-1_{\Omega_{j}'})(x)}>0$ for all $x\in\Sigma_{ij}$, except on a set of Hausdorff dimension at most $\sdimn-1$.
\end{lemma}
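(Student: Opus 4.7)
The plan is to follow the templates of Lemmas \ref{lemma6v2} and \ref{lemma7r}, adapted to the bilinear functional. Writing $F_i(s,t) \colonequals \int_{\R^{\adimn}}\int_{\R^{\adimn}} 1_{\Omega_i^{(s)}}(y) G(x,y) 1_{\Omega_i^{'(t)}}(x)\,\d x\,\d y$, the product rule gives $\frac{\d^2}{\d s^2}\big|_{s=0} F_i(s,s) = \partial_s^2 F_i + 2\partial_s \partial_t F_i + \partial_t^2 F_i$ evaluated at $(s,t)=(0,0)$. The two pure terms $\partial_s^2 F_i$ and $\partial_t^2 F_i$ are handled by Theorem \ref{thm4} with $T_\rho 1_{\Omega'_i}\,\gamma_{\adimn}$ (respectively $T_\rho 1_{\Omega_i}\,\gamma_{\adimn}$) playing the role of the fixed weight; after summing over $i$ and pairing into $(i,j)$ contributions using $N_{ij} = -N_{ji}$, they reproduce the full structural analog of Lemma \ref{lemma6v2} on both $\Sigma_{ij}$ and $\Sigma'_{ij}$. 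The cross piece $\partial_s \partial_t F_i$ is a product of two first variations in the style of Lemma \ref{latelemma3} (one on $\partial\Omega_i$, one on $\partial\Omega'_i$), producing precisely the double boundary integrals on the first two lines of \eqref{four32pv2n}.

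The next step is to collapse the extraneous terms using Lemma \ref{firstvarmaxnsn}. Constancy of $T_\rho(1_{\Omega_i} - 1_{\Omega_j})$ on $\Sigma'_{ij}$ forces its gradient to be parallel to $N'_{ij}$ there, and also reduces the divergence-type term to $c_{ij}\int_{\Sigma'_{ij}}(\mathrm{div}\,X' - \langle X', x\rangle)\,f'_{ij}\,\gamma_{\adimn}\,\d x$. Writing $c_{ij} = \mu_i - \mu_j$ for the Lagrange multipliers associated with the volume constraints $\gamma_{\adimn}(\Omega'_i) = \gamma_{\adimn}(\Omega_i)$, antisymmetry of the integrand in $(i,j)$ regroups the sum over $i<j$ as $\sum_i \mu_i \cdot \frac{\d^2}{\d s^2}\big|_{s=0}\gamma_{\adimn}(\Omega_i^{'(s)})$, which vanishes via Lemma \ref{lemma41} together with the volume-preserving extension of Lemma \ref{lemma27} applied to the primed partition (and symmetrically for the unprimed). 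To fix the sign in \eqref{nabeq3n}, use minimality: for $x$ to lie in $\Omega'_i$ rather than $\Omega'_j$, Lagrangian optimality demands $T_\rho(1_{\Omega_i} - 1_{\Omega_j})(x) \leq c_{ij}$, with equality on $\Sigma'_{ij}$ and the reverse inequality on the $\Omega'_j$ side. Hence $T_\rho(1_{\Omega_i} - 1_{\Omega_j})$ increases along the direction $N'_{ij}$ that points from $\Omega'_i$ into $\Omega'_j$, giving the $+$ sign; the second line of \eqref{nabeq3n} follows by swapping the roles of the two partitions.

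Substituting $\langle \overline{\nabla}T_\rho(1_{\Omega_i} - 1_{\Omega_j})(x), X'(x)\rangle = \vnormf{\overline{\nabla}T_\rho(1_{\Omega_i} - 1_{\Omega_j})(x)}\,f'_{ij}(x)$ on $\Sigma'_{ij}$ into the surviving gradient-type term yields $+\int_{\Sigma'_{ij}} \vnormf{\overline{\nabla}T_\rho(1_{\Omega_i}-1_{\Omega_j})}\,(f'_{ij})^2\,\gamma_{\adimn}\,\d x$, with the $+$ sign (as opposed to the $-$ in \eqref{four32pv2}) reflecting the passage from a maximization to a minimization problem; the analogous identity on $\Sigma_{ij}$ completes \eqref{four32pv2n}. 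For the almost-everywhere non-vanishing of $\vnormf{\overline{\nabla}T_\rho(1_{\Omega_i} - 1_{\Omega_j})}$ on $\Sigma'_{ij}$, real-analyticity of $T_\rho(1_{\Omega_i} - 1_{\Omega_j})$ (a Gaussian convolution of a bounded function) together with the fact that $1_{\Omega_i} - 1_{\Omega_j}$ is not a.e.\ constant implies that its critical set is a countable union of real-analytic strata of dimension at most $\sdimn$, whose intersection with the $\sdimn$-dimensional surface $\Sigma'_{ij}$ has Hausdorff dimension at most $\sdimn - 1$, as in Lemma \ref{lemma7r}. The main obstacle is the sign bookkeeping in \eqref{nabeq3n} and in the surviving gradient-type terms: the orientations $N_{ij}$ versus $N'_{ij}$ must be compared carefully, and the minimality direction must thread consistently through the first-variation perturbation in the presence of Lagrange multipliers coupling the two partitions.
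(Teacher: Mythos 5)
Your overall roadmap for \eqref{four32pv2n} is sound: writing the bilinear noise stability as a function of two time parameters, splitting the second derivative into pure plus cross pieces, handling the pure pieces via Theorem \ref{thm4} and the cross piece via two copies of Lemma \ref{latelemma3}, and then eliminating the divergence-type remainders with the first-variation condition from Lemma \ref{firstvarmaxnsn} together with the volume-preservation coming from Lemma \ref{lemma27}. That is exactly the ``mutatis mutandis'' transfer the paper intends from Lemmas \ref{lemma6v2} and \ref{lemma7r}. Your argument for the sign in \eqref{nabeq3n} is also correct, and it is in fact a different (and cleaner) route than the paper's: you read the sign off from first-order Lagrangian optimality ($T_{\rho}(1_{\Omega_{i}}-1_{\Omega_{j}})\leq c_{ij}$ on $\Omega_{i}'$, $\geq c_{ij}$ on $\Omega_{j}'$, hence increases along $N_{ij}'$), whereas the paper determines it from the second-variation inequality, by noting that the opposite sign would let one build a volume-preserving variation with negative second derivative.

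The genuine gap is in the last assertion, the almost-everywhere non-vanishing of $\vnormf{\overline{\nabla} T_{\rho}(1_{\Omega_{i}}-1_{\Omega_{j}})}$ on $\Sigma_{ij}'$. Real-analyticity of $T_{\rho}(1_{\Omega_{i}}-1_{\Omega_{j}})$ does give that its critical set is a real-analytic variety of dimension at most $\sdimn$, but that does \emph{not} imply its intersection with the $\sdimn$-dimensional surface $\Sigma_{ij}'$ has dimension at most $\sdimn-1$: the surface could lie inside the critical variety. A model obstruction is $f(x_{1},\ldots,x_{\adimn})=x_{1}^{3}$, which is entire and non-constant, has $\{f\geq 0\}$ and $\{f\leq 0\}$ both of positive measure, yet its gradient vanishes identically on the full $\sdimn$-dimensional interface $\{x_{1}=0\}$. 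Transversality of $\Sigma_{ij}'$ to the critical stratification is exactly what is in question, and it cannot be extracted from analyticity alone. The paper instead rules this out variationally (and flags that this step ``requires a slightly different argument''): if $\overline{\nabla}T_{\rho}(1_{\Omega_{i}}-1_{\Omega_{j}})$ vanished on an open subset $U\subset\Sigma_{ij}'$, choose $X'$ supported in $U$ so that the third term of \eqref{four32pv2n} is zero, then choose $X$ so that the sum of the first two (bilinear-in-$X,X'$) terms is strictly negative, and finally scale $X$ by a small $\epsilon>0$: the first two terms scale like $\epsilon$ while the fourth term scales like $\epsilon^{2}$, so the total second derivative is negative, contradicting minimality. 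You should replace the analyticity argument with this second-variation contradiction (or supply the missing transversality, which I do not see how to do without invoking optimality anyway).
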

Equation \eqref{nabeq3n} and the last assertion require a slightly different argument than previously used.  To see the last assertion, note that if there exists $1\leq i<j\leq m$ such that $\vnorm{\overline{\nabla} T_{\rho}(1_{\Omega_{i}}-1_{\Omega_{j}})(x)}=0$ on an open set in $\Sigma_{ij}'$, then choose $X'$ supported in this open set so that the third term of \eqref{four32pv2n} is zero.  Then, choose $Y$ such that sum of the first two terms in \eqref{four32pv2n} is negative.  Multiplying then $X$ by a small positive constant, and noting that the fourth term in \eqref{four32pv2n} has quadratic dependence on $X$, we can create a negative second derivative of the noise stability, giving a contradiction.  We can similarly justify the positive signs appearing in \eqref{nabeq3n} (as opposed to the negative signs from \eqref{nabeq3}).

%

Let $v\in\R^{\adimn}$.  For simplicity of notation, we denote $\langle v,N\rangle$ as the collection of functions $(\langle v,N_{ij}\rangle)_{1\leq i<j\leq m}$ and we denote $\langle v,N'\rangle$ as the collection of functions $(\langle v,N_{ij}'\rangle)_{1\leq i<j\leq m}$.  For any $1\leq i<j\leq m$, define
\begin{equation}\label{sdef2n}
\begin{aligned}
S_{ij}(\langle v,N\rangle)(x)
&\colonequals (1-\rho^{2})^{-(\adimn)/2}(2\pi)^{-\frac{\adimn}{2}}\Big(\int_{\partial\Omega_{i}}-\int_{\partial\Omega_{j}}\Big)\langle v,N(y)\rangle e^{-\frac{\vnorm{y-\rho x}^{2}}{2(1-\rho^{2})}}\,\d y,
\,\forall\,x\in\Sigma_{ij'}.\\
S_{ij}'(\langle v,N'\rangle)(x)
&\colonequals (1-\rho^{2})^{-(\adimn)/2}(2\pi)^{-\frac{\adimn}{2}}\Big(\int_{\partial\Omega_{i}'}-\int_{\partial\Omega_{j}'}\Big)\langle v,N'(y)\rangle e^{-\frac{\vnorm{y-\rho x}^{2}}{2(1-\rho^{2})}}\,\d y,
\,\forall\,x\in\Sigma_{ij}.
\end{aligned}
\end{equation}

\begin{lemma}[\embolden{Key Lemma, $m\geq 2$, Translations as Almost Eigenfunctions}]\label{treig2n}
Let $\Omega_{1},\ldots,\Omega_{m},\Omega_{1}',\ldots,\Omega_{m}'\subset\R^{\adimn}$ minimize problem \ref{prob2n}.  Fix $1\leq i<j\leq m$.  Let $v\in\R^{\adimn}$.  Then
\begin{equation}\label{gren}
\begin{aligned}
S_{ij}(\langle v,N\rangle)(x)
&=-\langle v,N_{ij}'(x)\rangle\frac{1}{\rho}\vnormf{\overline{\nabla} T_{\rho}(1_{\Omega_{i}}-1_{\Omega_{j}})(x)},\qquad\forall\,x\in\Sigma_{ij}'.\\
S_{ij}'(\langle v,N'\rangle)(x)
&=-\langle v,N_{ij}(x)\rangle\frac{1}{\rho}\vnormf{\overline{\nabla} T_{\rho}(1_{\Omega_{i}'}-1_{\Omega_{j}'})(x)},\qquad\forall\,x\in\Sigma_{ij}.\\
\end{aligned}
\end{equation}
\end{lemma}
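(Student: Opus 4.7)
The plan is to adapt the argument from Lemma \ref{treig} (and its multi-set analogue Lemma \ref{treig2}) to the bilinear setting, carefully tracking which partition determines the surface on which each identity is evaluated. I will prove the first equation in \eqref{gren}; the second one follows by the symmetric argument with the roles of the primed and unprimed sets swapped.

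Fix $1\leq i<j\leq m$ and $x\in\Sigma_{ij}'$. The first key input is Lemma \ref{firstvarmaxnsn}, which gives that $T_{\rho}(1_{\Omega_{i}}-1_{\Omega_{j}})$ is constant on $\Sigma_{ij}'$. Consequently, the tangential components of $\overline{\nabla}T_{\rho}(1_{\Omega_{i}}-1_{\Omega_{j}})(x)$ vanish along $\Sigma_{ij}'$, so this gradient is parallel to $N_{ij}'(x)$. To pin down the sign, I will invoke \eqref{nabeq3n} from Lemma \ref{lemma7rn}, which states
\[
\overline{\nabla}T_{\rho}(1_{\Omega_{i}}-1_{\Omega_{j}})(x)=N_{ij}'(x)\vnormf{\overline{\nabla}T_{\rho}(1_{\Omega_{i}}-1_{\Omega_{j}})(x)},\qquad\forall\,x\in\Sigma_{ij}'.
\]
(The positive sign here, in contrast to \eqref{nabeq3}, is forced by the second variation argument sketched after Lemma \ref{lemma7rn}: the problem is now a minimization, so the boundary term $\vnormf{\overline{\nabla} T_\rho(1_{\Omega_i}-1_{\Omega_j})}(f_{ij}')^2$ must appear with a positive sign in \eqref{four32pv2n}.)

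Next, I will compute $\langle v,\overline{\nabla}T_{\rho}(1_{\Omega_{i}}-1_{\Omega_{j}})(x)\rangle$ by differentiating under the integral sign in the definition \eqref{oudef}, exactly as in \eqref{gre}. This gives
\begin{align*}
\langle v,\overline{\nabla}T_{\rho}(1_{\Omega_{i}}-1_{\Omega_{j}})(x)\rangle
&=(1-\rho^{2})^{-\frac{\adimn}{2}}(2\pi)^{-\frac{\adimn}{2}}\frac{\rho}{1-\rho^{2}}\Big(\int_{\Omega_{i}}-\int_{\Omega_{j}}\Big)\langle v,y-\rho x\rangle e^{-\frac{\vnorm{y-\rho x}^{2}}{2(1-\rho^{2})}}\,\d y.
\end{align*}
Now I use the identity $\langle v,y-\rho x\rangle e^{-\frac{\vnorm{y-\rho x}^{2}}{2(1-\rho^{2})}}=-(1-\rho^{2})\mathrm{div}_{y}\big(v\,e^{-\frac{\vnorm{y-\rho x}^{2}}{2(1-\rho^{2})}}\big)$ and apply the divergence theorem on $\Omega_{i}$ and $\Omega_{j}$ separately, exploiting the decay of the Gaussian kernel at infinity (which can be justified along the lines of Remark \ref{drk}). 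Using the definition \eqref{sdef2n} of $S_{ij}$, this produces
\[
\langle v,\overline{\nabla}T_{\rho}(1_{\Omega_{i}}-1_{\Omega_{j}})(x)\rangle=-\rho\,S_{ij}(\langle v,N\rangle)(x).
\]

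Finally, taking the inner product of $v$ with both sides of the parallelism identity and substituting the above calculation, I obtain
\[
\langle v,N_{ij}'(x)\rangle\vnormf{\overline{\nabla}T_{\rho}(1_{\Omega_{i}}-1_{\Omega_{j}})(x)}=-\rho\,S_{ij}(\langle v,N\rangle)(x),
\]
which rearranges to the first equation in \eqref{gren}. The main obstacle is not a computational one, since all three steps are straightforward adaptations of Lemma \ref{treig}, but rather the justification of the sign in \eqref{nabeq3n}, which the paper addresses via the second variation contradiction argument; once this sign is settled the rest of the proof is essentially a line-by-line translation of \eqref{gre}-\eqref{lasteq} to the bilinear setting.
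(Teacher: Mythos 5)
Your proof is correct and follows the same route the paper implicitly uses: the paper does not write out a full proof of Lemma \ref{treig2n}, but instead remarks that it is obtained from Lemma \ref{treig2} (itself an adaptation of Lemma \ref{treig}) by propagating the positive sign in \eqref{nabeq3n} through the divergence-theorem computation. You have spelled out exactly those steps — the constancy of $T_{\rho}(1_{\Omega_{i}}-1_{\Omega_{j}})$ on $\Sigma_{ij}'$ from Lemma \ref{firstvarmaxnsn}, the normal alignment with sign fixed by \eqref{nabeq3n}, the directional derivative identity $\langle v,\overline{\nabla}T_{\rho}(1_{\Omega_{i}}-1_{\Omega_{j}})(x)\rangle=-\rho\,S_{ij}(\langle v,N\rangle)(x)$ via the divergence theorem as in \eqref{gre}, and the final rearrangement — and your tracking of the sign flip relative to \eqref{nabeq3} matches the paper's explanation.
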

When compared to Lemma \ref{treig2}, Lemma \ref{treig2n} has a negative sign on the right side of the equality, resulting from the positive sign in \eqref{nabeq3n} (as opposed to the negative sign on the right side of \eqref{nabeq3}).  Lemmas \ref{lemma7rn} and \ref{treig2n} then imply the following.

\begin{lemma}[\embolden{Second Variation of Translations, Multiple Sets}]\label{keylemn}
Let $0<\rho<1$.  Let $v\in\R^{\adimn}$.  Let $\Omega_{1},\ldots,\Omega_{m}$ minimize problem \ref{prob2}.  For each $1\leq i\leq m$, let $\{\Omega_{i}^{(s)}\}_{s\in(-1,1)}$ be the variation of $\Omega_{i}$ corresponding to the constant vector field $X\colonequals v$.  Assume that
$$\int_{\partial\Omega_{i}}\langle v,N(x)\rangle \gamma_{\adimn}(x)\,\d x=\int_{\partial\Omega_{i}'}\langle v,N'(x)\rangle \gamma_{\adimn}(x)\,\d x=0,\qquad\forall\,1\leq i\leq m.$$
Then
\begin{flalign*}
&\frac{\d^{2}}{\d s^{2}}\Big|_{s=0}\sum_{i=1}^{m}\int_{\R^{\adimn}}1_{\Omega_{i}^{(s)}}(x)T_{\rho}1_{\Omega_{i}^{'(s)}}(x)\gamma_{\adimn}(x)\,\d x\\
&\qquad\qquad\qquad=\Big(-\frac{1}{\rho}+1\Big)\sum_{1\leq i<j\leq m}\int_{\Sigma_{ij}}\vnormf{\overline{\nabla}T_{\rho}(1_{\Omega_{i}'}-1_{\Omega_{j}'})(x)}\langle v,N_{ij}(x)\rangle^{2}\gamma_{\adimn}(x)\,\d x\\
&\qquad\qquad\qquad\,\,+\Big(-\frac{1}{\rho}+1\Big)\sum_{1\leq i<j\leq m}\int_{\Sigma_{ij}'}\vnormf{\overline{\nabla}T_{\rho}(1_{\Omega_{i}}-1_{\Omega_{j}})(x)}\langle v,N_{ij}'(x)\rangle^{2}\gamma_{\adimn}(x)\,\d x.
\end{flalign*}
\end{lemma}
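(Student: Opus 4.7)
The plan is to substitute the constant vector field $X=X'=v$ into the second variation formula \eqref{four32pv2n} of Lemma \ref{lemma7rn} and then simplify the two cross terms involving $G$ using the bilinear Key Lemma \ref{treig2n}. This parallels the proof of Lemma \ref{keylem} in the positive-correlation setting, but two sign changes compared to that case conspire to yield the coefficient $(-1/\rho+1)$ in place of $(1/\rho-1)$: the flipped sign in \eqref{nabeq3n} compared to \eqref{nabeq3} produces the coefficient $+1$ in front of the squared terms in \eqref{four32pv2n} (rather than $-1$), and the minus sign on the right side of \eqref{gren} (absent in Lemma \ref{treig2}) produces the $-1/\rho$ in the eigenfunction step. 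With $X=X'=v$, the boundary functions $f_{ij}=\langle v,N_{ij}\rangle$ and $f_{ij}'=\langle v,N_{ij}'\rangle$ exactly match the inputs of the operators $S_{ij}$ and $S_{ij}'$ defined in \eqref{sdef2n}.

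First, from the factorization of $G$ in \eqref{gdef} and the definition \eqref{sdef2n}, for $x\in\Sigma_{ij}'$ one has
$$\Big(\int_{\redb\Omega_{i}}-\int_{\redb\Omega_{j}}\Big)G(x,y)\langle v,N(y)\rangle \,\d y = \gamma_{\adimn}(x)\, S_{ij}(\langle v,N\rangle)(x),$$
and the analogous identity with primes and non-primes swapped holds for $x\in\Sigma_{ij}$. Applying \eqref{gren} of Lemma \ref{treig2n} converts $S_{ij}(\langle v,N\rangle)(x)$ into $-\rho^{-1}\langle v,N_{ij}'(x)\rangle\vnormf{\overline{\nabla}T_{\rho}(1_{\Omega_{i}}-1_{\Omega_{j}})(x)}$, and converts $S_{ij}'(\langle v,N'\rangle)(x)$ into $-\rho^{-1}\langle v,N_{ij}(x)\rangle\vnormf{\overline{\nabla}T_{\rho}(1_{\Omega_{i}'}-1_{\Omega_{j}'})(x)}$. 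Consequently, the first two sums in \eqref{four32pv2n} combine to
$$-\frac{1}{\rho}\sum_{1\leq i<j\leq m}\int_{\Sigma_{ij}'}\vnormf{\overline{\nabla}T_{\rho}(1_{\Omega_{i}}-1_{\Omega_{j}})(x)}\langle v,N_{ij}'(x)\rangle^{2}\gamma_{\adimn}(x)\,\d x$$
$$-\frac{1}{\rho}\sum_{1\leq i<j\leq m}\int_{\Sigma_{ij}}\vnormf{\overline{\nabla}T_{\rho}(1_{\Omega_{i}'}-1_{\Omega_{j}'})(x)}\langle v,N_{ij}(x)\rangle^{2}\gamma_{\adimn}(x)\,\d x.$$
The third and fourth sums in \eqref{four32pv2n} have the very same two integrands but with coefficient $+1$; summing all four terms yields the claimed factor $(-1/\rho+1)$ in front of each sum.

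The main technical obstacle is that $X\equiv v$ is not compactly supported, so Lemmas \ref{lemma7rn} and \ref{treig2n} do not literally apply. This is resolved by the standard truncation device of replacing $v$ with $v\cdot\eta_{R}$ for a smooth cutoff $\eta_{R}\in C_{0}^{\infty}(\R^{\adimn})$ equal to $1$ on $B(0,R)$ and supported in $B(0,2R)$, and then letting $R\to\infty$. The mean-zero hypotheses $\int_{\partial\Omega_{i}}\langle v,N\rangle\gamma_{\adimn}=\int_{\partial\Omega_{i}'}\langle v,N'\rangle\gamma_{\adimn}=0$ are exactly what allow the truncated vector fields to be infinitesimally volume-corrected via Lemma \ref{lemma27} without affecting the leading-order computation, and the Gaussian decay of $\gamma_{\adimn}$ (together with estimates of the type worked out in Remark \ref{drk}) ensures that all error terms generated by the cutoff vanish in the limit $R\to\infty$.
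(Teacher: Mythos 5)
Your proof is correct and follows essentially the same route the paper takes: the paper simply asserts ``Lemmas~\ref{lemma7rn} and~\ref{treig2n} then imply the following,'' and your write-up fills in exactly that computation — substituting $X=X'=v$ into \eqref{four32pv2n}, rewriting the $G$-integrals as $\gamma_{\adimn}(x)\,S_{ij}(\langle v,N\rangle)(x)$ and $\gamma_{\adimn}(x)\,S_{ij}'(\langle v,N'\rangle)(x)$ via \eqref{gdef} and \eqref{sdef2n}, applying \eqref{gren} to pick up the factor $-1/\rho$, and combining with the $+1$-coefficient squared terms to obtain $(-1/\rho+1)$. The truncation remark for handling the non-compactly-supported $X\equiv v$ is a sensible addition that the paper only addresses implicitly (via Remark~\ref{drk}); the main thing that matters there is the Gaussian decay making the cutoff errors vanish as $R\to\infty$, and the invocation of Lemma~\ref{lemma27} is a harmless extra precaution even though the second-variation formula itself does not strictly require volume preservation.
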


Since $\rho\in(0,1)$, $-\frac{1}{\rho}+1<0$.  (The analogous inequality in Lemma \ref{keylem} was $\frac{1}{\rho}-1>0$.)  The following Theorem is a modification of the corresponding \cite[Theorem 7.9]{heilman20d}.  In \cite{heilman20d}, the main dimension reduction result in the case of negative correlation obtained a sub-optimal dimension by restricting both $\gamma_{\adimn}(\Omega_{i})$ and $\gamma_{\adimn}(\Omega_{i}')$ for all $1\leq i\leq m$.  Here we obtain the optimal dimension dependence (concluding that the $\Theta$ sets below are contained in $\R^{m-1}$, which is optimal), by instead imposing the restriction that $\gamma_{\adimn}(\Omega_{i})=\gamma_{\adimn}(\Omega_{i}')$ for all $1\leq i\leq m$.  In fact, the argument below seems to work with arbitrary linear constraints on the measures of the sets $\Omega_{i}$ and $\Omega_{i}'$.

\begin{theorem}[\embolden{Main Structure Theorem/ Dimension Reduction, Negative Correlation}]\label{mainthm1n}
Fix $0<\rho<1$.  Let $m\geq2$ with $m\leq \sdimn+2$.  Let $\Omega_{1},\ldots\Omega_{m},\Omega_{1}',\ldots\Omega_{m}'\subset\R^{\adimn}$ minimize Problem \ref{prob2n} (that exist by Lemma \ref{existlemn}).  Then, after rotating the sets $\Omega_{1},\ldots\Omega_{m},$ $\Omega_{1}',\ldots\Omega_{m}'$ and applying Lebesgue measure zero changes to these sets, there exist measurable sets $\Theta_{1},\ldots\Theta_{m},\Theta_{1}',\ldots\Theta_{m}'\subset\R^{m-1}$ such that,
$$\Omega_{i}=\Theta_{i}\times\R^{\sdimn-m+2},\,\,\Omega_{i}'=\Theta_{i}'\times\R^{\sdimn-m+2}\qquad\forall\, 1\leq i\leq m.$$  
\end{theorem}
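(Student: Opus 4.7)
I will exhibit an $(\sdimn-m+2)$-dimensional subspace $V\subset\R^{\adimn}$ of common translation directions for all $2m$ sets, and then apply an orthogonal change of coordinates. For $1\leq i\leq m$, define the Gaussian barycenters $b_{i}\colonequals\int_{\Omega_{i}}x\gamma_{\adimn}(x)\,\d x$ and $b_{i}'\colonequals\int_{\Omega_{i}'}x\gamma_{\adimn}(x)\,\d x$ in $\R^{\adimn}$. Since $\{\Omega_{i}\}$ and $\{\Omega_{i}'\}$ both partition $\R^{\adimn}$ and $\int_{\R^{\adimn}}x\gamma_{\adimn}(x)\,\d x=0$, we have $\sum_{i=1}^{m}(b_{i}-b_{i}')=0$, so $\dim\,\mathrm{span}\{b_{i}-b_{i}'\colon 1\leq i\leq m\}\leq m-1$ and
$$V\colonequals\{v\in\R^{\adimn}\colon\langle v,b_{i}-b_{i}'\rangle=0,\,\forall\,1\leq i\leq m\}$$
has dimension at least $\adimn-(m-1)=\sdimn-m+2$. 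The crucial count is that the constraint $\gamma_{\adimn}(\Omega_{i})=\gamma_{\adimn}(\Omega_{i}')$ cuts out only $m-1$ independent directions (encoded in the $b_{i}-b_{i}'$), rather than the $2(m-1)$ directions that would arise from fixing both sequences of measures separately.

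Now fix $v\in V$. The divergence theorem gives $\int_{\partial\Omega_{i}}\langle v,N\rangle\gamma_{\adimn}=-\langle v,b_{i}\rangle$ and likewise for $\Omega_{i}'$, so the defining condition of $V$ is exactly the first-order compatibility needed for a variation with boundary direction $v$ to preserve $\gamma_{\adimn}(\Omega_{i}^{(s)})=\gamma_{\adimn}(\Omega_{i}^{'(s)})$ for all $i$. Using a bilinear version of the extension Lemma \ref{lemma27}, I would construct $X,X'\in C_{0}^{\infty}(\R^{\adimn},\R^{\adimn})$ with $\langle X,N_{ij}\rangle=\langle v,N_{ij}\rangle$ on each $\Sigma_{ij}$, $\langle X',N_{ij}'\rangle=\langle v,N_{ij}'\rangle$ on each $\Sigma_{ij}'$, whose induced variations $\{\Omega_{i}^{(s)}\},\{\Omega_{i}^{'(s)}\}$ preserve the constraint for all $s\in(-1,1)$. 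Because the right-hand side of \eqref{four32pv2n} depends on $(X,X')$ only through their normal boundary values on $\cup_{i<j}(\Sigma_{ij}\cup\Sigma_{ij}')$, the almost-eigenfunction identity of Lemma \ref{treig2n} that drives the proof of Lemma \ref{keylemn} applies verbatim to these non-constant extensions, yielding
$$\frac{\d^{2}}{\d s^{2}}\Big|_{s=0}\sum_{i=1}^{m}\int_{\R^{\adimn}}1_{\Omega_{i}^{(s)}}(x)T_{\rho}1_{\Omega_{i}^{'(s)}}(x)\gamma_{\adimn}(x)\,\d x=\Big(1-\frac{1}{\rho}\Big)\sum_{1\leq i<j\leq m}\Big[\int_{\Sigma_{ij}}\vnormf{\overline{\nabla}T_{\rho}(1_{\Omega_{i}'}-1_{\Omega_{j}'})}\langle v,N_{ij}\rangle^{2}\gamma_{\adimn}+\int_{\Sigma_{ij}'}\vnormf{\overline{\nabla}T_{\rho}(1_{\Omega_{i}}-1_{\Omega_{j}})}\langle v,N_{ij}'\rangle^{2}\gamma_{\adimn}\Big].$$

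Since the sets minimize Problem \ref{prob2n}, the left-hand side is nonnegative; since $1-1/\rho<0$ for $\rho\in(0,1)$ and each integrand on the right is nonnegative, every integral must vanish. By the final assertion of Lemma \ref{lemma7rn}, $\vnormf{\overline{\nabla}T_{\rho}(1_{\Omega_{i}}-1_{\Omega_{j}})}>0$ almost everywhere on $\Sigma_{ij}'$ and analogously on $\Sigma_{ij}$, so $\langle v,N_{ij}\rangle=0$ a.e. on $\Sigma_{ij}$ and $\langle v,N_{ij}'\rangle=0$ a.e. on $\Sigma_{ij}'$. Combined with the local smoothness from Lemma \ref{reglemn}, this forces translation by $v$ to preserve each $\Omega_{i}$ and each $\Omega_{i}'$ up to Lebesgue-null sets. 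Applying this for every $v\in V$ and rotating coordinates so that $V\supset\{0\}^{m-1}\times\R^{\sdimn-m+2}$ produces measurable $\Theta_{i},\Theta_{i}'\subset\R^{m-1}$ with $\Omega_{i}=\Theta_{i}\times\R^{\sdimn-m+2}$ and $\Omega_{i}'=\Theta_{i}'\times\R^{\sdimn-m+2}$.

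I expect the main obstacle to lie in the extension step: namely, formulating and proving a bilinear analogue of Lemma \ref{lemma27} that exploits the single linear constraint $\sum_{j}\int_{\Sigma_{ij}}f_{ij}\gamma_{\adimn}=\sum_{j}\int_{\Sigma_{ij}'}f_{ij}'\gamma_{\adimn}$ per index $i$ (rather than two separate constraints), and then carefully justifying that the right-hand side of \eqref{four32pv2n} depends only on the normal boundary values of $(X,X')$ so that Lemma \ref{keylemn}'s clean eigenvalue form is available for non-constant extensions. This second point is exactly what upgrades the suboptimal dimension bound $\sdimn-2m+3$ from \cite{heilman20d} to the sharp $\sdimn-m+2$.
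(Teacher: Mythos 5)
Your proposal tracks the paper's proof closely. The paper defines $V$ directly as the common kernel of the linear functionals
$$v\mapsto \sum_{j\neq i}\int_{\Sigma_{ij}}\langle v,N_{ij}\rangle\gamma_{\adimn}\,\d x-\sum_{j\neq i}\int_{\Sigma_{ij}'}\langle v,N_{ij}'\rangle\gamma_{\adimn}\,\d x,\qquad 1\leq i\leq m,$$
and gets $\dim V\geq \sdimn+2-m$ from the rank--nullity theorem because these functionals sum to zero. Your barycenter formulation is the same $V$: by the divergence theorem applied to $v\gamma_{\adimn}$, $\int_{\partial\Omega_{i}}\langle v,N\rangle\gamma_{\adimn}=-\langle v,b_{i}\rangle$, so the paper's condition is precisely $\langle v,b_{i}-b_{i}'\rangle=0$, and your $\sum_{i}(b_{i}-b_{i}')=0$ is literally the same rank deficiency. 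So this part is a cosmetic rephrasing, not a different route.

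The one place where you are more explicit than the paper is the extension step. The paper says ``consider the variation induced by the constant vector field $X:=v$'' and then invokes Lemma \ref{keylemn}, whose clean $(1-1/\rho)$ form relies on the translation almost-eigenfunction identity of Lemma \ref{treig2n} being substituted into the \emph{volume-preserving} second variation formula \eqref{four32pv2n}. A raw translation by $v$ need not preserve the constraint $\gamma_{\adimn}(\Omega_{i}^{(s)})=\gamma_{\adimn}(\Omega_{i}^{'(s)})$ for $s\neq 0$; the intended reading is that $v\in V$ guarantees first-order compatibility, one then passes to a volume-preserving extension in the spirit of Lemma \ref{lemma27} agreeing with $v$ on the boundary, and since \eqref{four32pv2n} depends only on normal boundary data, Lemma \ref{keylemn}'s formula carries over. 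You make this step explicit and correctly flag the need for a bilinear analogue of Lemma \ref{lemma27} respecting only the difference constraint $\gamma_{\adimn}(\Omega_{i})=\gamma_{\adimn}(\Omega_{i}')$ (which is exactly what buys the sharp bound $\sdimn-m+2$ instead of $\sdimn-2m+3$). This is a genuine improvement in rigor, but it is the same argument the paper intends; there is no gap in your proposal provided that extension lemma is written out.

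In short: correct, and essentially the paper's own proof, with a cleaner geometric description of $V$ via barycenters and a more explicit treatment of the volume-preserving extension that the paper leaves implicit.
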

\begin{proof}
The sets $\Omega_{1},\ldots\Omega_{m},\Omega_{1}',\ldots\Omega_{m}'\subset\R^{\adimn}$ with $\gamma_{\adimn}(\Omega_{i})=\gamma_{\adimn}(\Omega_{i}')$ for all $1\leq i\leq m$ that minimize Problem \ref{prob2} exist by Lemma \ref{existlemn}.  From Lemma \ref{reglemn} their boundaries are locally finite unions of $C^{\infty}$ $\sdimn$-dimensional manifolds.

By Lemma \ref{firstvarmaxnsn}, for all $1\leq i<j\leq m$, there exists $c_{ij},c_{ij}'\in\R$ such that
$$T_{\rho}(1_{\Omega_{i}}-1_{\Omega_{j}})(x)=c_{ij},\qquad\forall\,x\in\Sigma_{ij}'.$$
$$T_{\rho}(1_{\Omega_{i}'}-1_{\Omega_{j}'})(x)=c_{ij}',\qquad\forall\,x\in\Sigma_{ij}.$$
By this condition, the regularity Lemma \ref{reglemn}, and the last part of Lemma \ref{lemma7rn},

\begin{flalign*}
\overline{\nabla}T_{\rho}(1_{\Omega_{i}}-1_{\Omega_{j}})(x)&=N_{ij}'(x)\vnormf{\overline{\nabla}T_{\rho}(1_{\Omega_{i}}-1_{\Omega_{j}})(x)},\qquad\forall\,x\in\Sigma_{ij}'.\\
\overline{\nabla}T_{\rho}(1_{\Omega_{i}'}-1_{\Omega_{j}'})(x)&=N_{ij}(x)\vnormf{\overline{\nabla}T_{\rho}(1_{\Omega_{i}'}-1_{\Omega_{j}'})(x)},\qquad\forall\,x\in\Sigma_{ij}.\\
\end{flalign*}
Moreover, by the last part of Lemma \ref{lemma7rn}, except for sets $\sigma_{ij},\sigma_{ij}'$ of Hausdorff dimension at most $\sdimn-1$, we have
\begin{equation}\label{nine1}
\begin{aligned}
\vnormf{\overline{\nabla}T_{\rho}(1_{\Omega_{i}}-1_{\Omega_{j}})(x)}&>0,\qquad\forall\,x\in\Sigma_{ij}'\setminus\sigma_{ij}'.\\
\vnormf{\overline{\nabla}T_{\rho}(1_{\Omega_{i}'}-1_{\Omega_{j}'})(x)}&>0,\qquad\forall\,x\in\Sigma_{ij}\setminus\sigma_{ij}.\\
\end{aligned}
\end{equation}

Fix $v\in\R^{\adimn}$, and consider the variation of $\Omega_{1},\ldots,\Omega_{m},\Omega_{1}',\ldots,\Omega_{m}'$ induced by the constant vector field $X\colonequals v$.  For all $1\leq i<j\leq m$, define $S_{ij}$ as in \eqref{sdef2n}.  Define
\begin{flalign*}
V&\colonequals\Big\{v\in\R^{\adimn}\colon \sum_{j\in\{1,\ldots,m\}\setminus\{i\}}\int_{\Sigma_{ij}}\langle v,N_{ij}(x)\rangle \gamma_{\adimn}(x)\,\d x\\
&\qquad\qquad\qquad\qquad
=\sum_{j\in\{1,\ldots,m\}\setminus\{i\}}\int_{\Sigma_{ij}'}\langle v,N_{ij}'(x)\rangle \gamma_{\adimn}(x)\,\d x,\qquad\forall\,1\leq i\leq m\Big\}.
\end{flalign*}
From Lemma \ref{keylemn},
\begin{flalign*}
v\in V\,\Longrightarrow&\,\,\,\frac{\d^{2}}{\d s^{2}}\Big|_{s=0}\sum_{i=1}^{m}\int_{\R^{\adimn}}1_{\Omega_{i}^{(s)}}(x)T_{\rho}1_{\Omega_{i}^{'(s)}}(x)\gamma_{\adimn}(x)\,\d x\\
&\qquad\qquad=\Big(-\frac{1}{\rho}+1\Big)\sum_{1\leq i<j\leq m}\int_{\Sigma_{ij}}\vnormf{\overline{\nabla}T_{\rho}(1_{\Omega_{i}'}-1_{\Omega_{j}'})(x)}\langle v,N_{ij}(x)\rangle^{2}\gamma_{\adimn}(x)\,\d x\\
&\qquad\qquad\,\,+\Big(-\frac{1}{\rho}+1\Big)\sum_{1\leq i<j\leq m}\int_{\Sigma_{ij}'}\vnormf{\overline{\nabla}T_{\rho}(1_{\Omega_{i}}-1_{\Omega_{j}})(x)}\langle v,N_{ij}'(x)\rangle^{2}\gamma_{\adimn}(x)\,\d x.
\end{flalign*}
Since $0<\rho<1$, \eqref{nine1} implies
\begin{equation}\label{nine2}
v\in V\,\Longrightarrow\,\langle v,N_{ij}(x)\rangle=\langle v,N_{ij}'(x)\rangle=0,\qquad\forall\,x\in\Sigma_{ij},\,\forall\,x'\in\Sigma_{ij}',\,\forall\,1\leq i<j\leq m.
\end{equation}
The set $V$ has dimension at least $\sdimn+2-m$, by the rank-nullity theorem, since $V$ is the null space of the linear operator $M\colon \R^{\adimn}\to\R^{m}$ defined by
$$
(M(v))_{i}\colonequals \sum_{j\in\{1,\ldots,m\}\setminus\{i\}}\Big(\int_{\Sigma_{ij}}\langle v,N_{ij}(x)\rangle \gamma_{\adimn}(x)\,\d x
-\int_{\Sigma_{ij}}\langle v,N_{ij}'(x)\rangle \gamma_{\adimn}(x)\,\d x\Big),\,\forall\,1\leq i\leq m
$$ 
and $M$ has rank at most $m-1$ (since $\sum_{i=1}^{m}(M(v))_{i}=0$ for all $v\in\R^{\adimn}$).  So, by \eqref{nine2}, after rotating $\Omega_{1},\ldots,\Omega_{m},\Omega_{1}',\ldots,\Omega_{m}'$, we conclude $\exists$ measurable $\Theta_{1},\ldots,\Theta_{m},\Theta_{1}',\ldots,\Theta_{m}'\subset\R^{m-1}$ such that
$$\Omega_{i}=\Theta_{i}\times\R^{\sdimn+2-m},\quad \Omega_{i}'=\Theta_{i}'\times\R^{\sdimn+2-m}\qquad\forall\,1\leq i\leq m.$$
\end{proof}

\section{Bilinear Version: Dilation}

\begin{lemma}[\embolden{Dilation as Almost Eigenfunction}]\label{mceign}
Let $\Omega_{1},\ldots,\Omega_{m},\Omega_{1}',\ldots,\Omega_{m}'\subset\R^{\adimn}$ minimize Problem \ref{prob2n}.  Then for all $1\leq i,j\leq m$,
\begin{equation}\label{six1zn}
\begin{aligned}
&S_{ij}(\langle\cdot ,N\rangle)(x)+\langle x,N_{ij}'(x)\rangle\vnormf{\overline{\nabla}T_{\rho}(1_{\Omega_{i}}-1_{\Omega_{j}})(x)}\\
&\qquad=\Big(\frac{1}{\rho^{2}}-1\Big)\Big(-\langle x,N_{ij}'(x)\rangle\vnorm{\overline{\nabla} T_{\rho}(1_{\Omega_{i}}-1_{\Omega_{j}})(x)}
+\rho \frac{\d}{\d\rho}T_{\rho}(1_{\Omega_{i}}-1_{\Omega_{j}})(x)\Big),\quad\forall\,x\in\Sigma_{ij}'.
 \end{aligned}
\end{equation}
\begin{equation}\label{six1zn2}
\begin{aligned}
&S_{ij}'(\langle\cdot ,N'\rangle)(x)+\langle x,N_{ij}(x)\rangle\vnormf{\overline{\nabla}T_{\rho}(1_{\Omega_{i}'}-1_{\Omega_{j}'})(x)}\\
&\qquad=\Big(\frac{1}{\rho^{2}}-1\Big)\Big(-\langle x,N_{ij}(x)\rangle\vnormf{\overline{\nabla} T_{\rho}(1_{\Omega_{i}'}-1_{\Omega_{j}'})(x)}
+\rho \frac{\d}{\d\rho}T_{\rho}(1_{\Omega_{i}'}-1_{\Omega_{j}'})(x)\Big),\quad\forall\,x\in\Sigma_{ij}.
 \end{aligned}
\end{equation}
\end{lemma}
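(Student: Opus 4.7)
The plan is to mirror the proof of Lemma~\ref{mceigp}, tracking the two sign changes introduced by the bilinear setting: the positive signs in \eqref{nabeq3n} (versus the negative sign in \eqref{nabeq3}) and the extra minus sign in Lemma~\ref{treig2n} (versus Lemma~\ref{treig2}). It suffices to establish \eqref{six1zn} on $\Sigma_{ij}'$; equation \eqref{six1zn2} then follows by the identical argument with the roles of $(\Omega_i, N_{ij}, \Sigma_{ij})$ and $(\Omega_i', N_{ij}', \Sigma_{ij}')$ exchanged throughout.

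Fix $x\in\Sigma_{ij}'$. The strategy is to compute $\mathrm{div}\,\overline{\nabla} T_{\rho}(1_{\Omega_i}-1_{\Omega_j})(x)$ in two independent ways and equate. For the first, I differentiate under the integral sign in \eqref{oudef} applied to $1_{\Omega_i}-1_{\Omega_j}$. Since the differentiations are with respect to $x$ while the integration runs over $\Omega_i,\Omega_j$ in the $y$ variable, the chain of identities \eqref{six3}--\eqref{six3t} goes through verbatim, yielding
$$\mathrm{div}\,\overline{\nabla} T_{\rho}(1_{\Omega_i}-1_{\Omega_j})(x) = -\frac{\rho^{2}}{1-\rho^{2}}\bigl(S_{ij}(\langle\cdot,N\rangle)(x)-\rho\langle x,S_{ij}(N)(x)\rangle\bigr).$$
I then apply Lemma~\ref{treig2n} coordinatewise to get $\rho\langle x,S_{ij}(N)(x)\rangle = -\langle x,N_{ij}'(x)\rangle\|\overline{\nabla} T_{\rho}(1_{\Omega_i}-1_{\Omega_j})(x)\|$ on $\Sigma_{ij}'$; this is the first sign flip relative to the proof of Lemma~\ref{mceigp}, where the analogous identity carried a $+$ sign. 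Substituting produces
$$\mathrm{div}\,\overline{\nabla} T_{\rho}(1_{\Omega_i}-1_{\Omega_j})(x) = -\frac{\rho^{2}}{1-\rho^{2}}\bigl(S_{ij}(\langle\cdot,N\rangle)(x)+\langle x,N_{ij}'(x)\rangle\|\overline{\nabla} T_{\rho}(1_{\Omega_i}-1_{\Omega_j})(x)\|\bigr).$$

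For the second expression, I combine $\mathrm{div}\,\overline{\nabla}=\overline{\Delta}$ with \eqref{oup} to write $\mathrm{div}\,\overline{\nabla} T_{\rho}f(x)=-\rho\frac{\d}{\d\rho}T_{\rho}f(x)+\langle x,\overline{\nabla} T_{\rho}f(x)\rangle$. For $f=1_{\Omega_i}-1_{\Omega_j}$ at $x\in\Sigma_{ij}'$, I invoke the first line of \eqref{nabeq3n} --- the second sign flip --- to rewrite $\langle x,\overline{\nabla} T_{\rho}(1_{\Omega_i}-1_{\Omega_j})(x)\rangle$ as $+\langle x,N_{ij}'(x)\rangle\|\overline{\nabla} T_{\rho}(1_{\Omega_i}-1_{\Omega_j})(x)\|$. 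Equating the two expressions for the divergence, multiplying through by $-(1-\rho^{2})/\rho^{2}$, and collecting terms gives \eqref{six1zn} exactly. The two sign flips combine to produce the $+$ sign on the second summand of the left-hand side of \eqref{six1zn} (versus the $-$ in \eqref{six1z}) and the $-$ sign multiplying $\langle x,N_{ij}'(x)\rangle\|\overline{\nabla}T_{\rho}(1_{\Omega_i}-1_{\Omega_j})(x)\|$ on the right (versus the $+$ in \eqref{six1z}).

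The only obstacle is careful bookkeeping of signs; no new analytic ingredient is required beyond what was used for Lemma~\ref{mceigp}. \emph{A priori} finiteness of the integrals and the justification of the divergence theorem in the first computation are handled exactly as in Remark~\ref{drk}: one truncates $\Omega_i,\Omega_j$ with $B(0,r)$, applies the divergence theorem on the truncation where differentiation under the integral sign is rigorous, and then sends $r\to\infty$, noting that the boundary contributions on $\Omega_i\cap\partial B(0,r)$ vanish uniformly for $x$ in compact sets by the Gaussian decay of the integrand.
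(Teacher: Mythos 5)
Your proposal is correct and follows essentially the same approach as the paper: compute $\mathrm{div}\,\overline{\nabla} T_{\rho}(1_{\Omega_{i}}-1_{\Omega_{j}})$ two ways --- once via the divergence theorem plus Lemma~\ref{treig2n} (with its bilinear sign flip), and once via $\overline{\Delta}=\mathrm{div}\,\overline{\nabla}$ combined with \eqref{oup} and the first line of \eqref{nabeq3n} --- then equate and rearrange, citing Remark~\ref{drk} for the justification of the divergence theorem. The sign bookkeeping you describe matches the paper's derivation of \eqref{six3tp} and the final display exactly.
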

\begin{proof}
We prove \eqref{six1zn}.  From \eqref{nabeq3n}, for all $1\leq i,j\leq m$,
\begin{equation}\label{firstven}
\begin{aligned}
\overline{\nabla}T_{\rho}(1_{\Omega_{i}}-1_{\Omega_{j}})(x)&=N_{ij}'(x)\vnormf{\overline{\nabla}T_{\rho}(1_{\Omega_{i}}-1_{\Omega_{j}})(x)},\qquad\forall\,x\in\Sigma_{ij}'.\\
\overline{\nabla}T_{\rho}(1_{\Omega_{i}'}-1_{\Omega_{j}'})(x)&=N_{ij}(x)\vnormf{\overline{\nabla}T_{\rho}(1_{\Omega_{i}'}-1_{\Omega_{j}'})(x)},\qquad\forall\,x\in\Sigma_{ij}.\\
\end{aligned}
\end{equation}
Taking the divergence of \eqref{firstven}, for all $1\leq i,j\leq m$,
\begin{equation}\label{firstvev2p}
\begin{aligned}
\mathrm{div}\overline{\nabla} T_{\rho}(1_{\Omega_{i}}-1_{\Omega_{j}})(x)
&=\mathrm{div}(N_{ij}'(x))\vnorm{\overline{\nabla}T_{\rho}(1_{\Omega_{i}}-1_{\Omega_{j}})(x)}\\
&\qquad\qquad\qquad\langle N_{ij}'(x),\overline{\nabla}\vnorm{\overline{\nabla} T_{\rho}(1_{\Omega_{i}}-1_{\Omega_{j}})(x)} \rangle.
\end{aligned}
\end{equation}
Applying the divergence theorem to the last equality in \eqref{six3},
\begin{equation}\label{six3tp}
\begin{aligned}
&\mathrm{div}\overline{\nabla} T_{\rho}(1_{\Omega_{i}}-1_{\Omega_{j}})(x)\\
&\qquad=(1-\rho^{2})^{-(\adimn)/2}(2\pi)^{-(\adimn)/2}\frac{-\rho^{2}}{1-\rho^{2}}\Big(\int_{\partial\Omega_{i}}-\int_{\partial\Omega_{j}}\Big)\Big\langle (y-\rho x), N(y)\Big\rangle e^{-\frac{\vnorm{y-\rho x}^{2}}{2(1-\rho^{2})}} \,\d y\\
&\qquad\stackrel{\eqref{sdef2n}}{=}-\frac{\rho^{2}}{1-\rho^{2}}\Big(S_{ij}(\langle \cdot, N\rangle)(x)  -\rho \langle x,S_{ij}(N)(x)\rangle\Big)\\
&\qquad\stackrel{\eqref{gren}}{=}-\frac{\rho^{2}}{1-\rho^{2}}\Big(S_{ij}(\langle \cdot, N\rangle)(x)  +\langle x,N_{ij}'(x)\rangle\vnorm{\overline{\nabla} T_{\rho}(1_{\Omega_{i}}-1_{\Omega_{j}})}\Big).
\end{aligned}
\end{equation}
This equation and \eqref{firstvev2p} proves \eqref{six1zn}, together with
\begin{flalign*}
&\mathrm{div}\overline{\nabla} T_{\rho}(1_{\Omega_{i}}-1_{\Omega_{j}})(x)\\
&\qquad\qquad=\overline{\Delta}T_{\rho}(1_{\Omega_{i}}-1_{\Omega_{j}})(x)
-\langle x,\nabla T_{\rho}(1_{\Omega_{i}}-1_{\Omega_{j}})(x)\rangle
+\langle x,\overline{\nabla} T_{\rho}(1_{\Omega_{i}}-1_{\Omega_{j}})(x)\rangle\\
&\qquad\qquad\stackrel{\eqref{oup}\wedge\eqref{firstven}}{=}-\rho\frac{d}{d\rho}T_{\rho}(1_{\Omega_{i}}-1_{\Omega_{j}})(x)+\langle x,N_{ij}'(x)\rangle\vnormf{\overline{\nabla} T_{\rho}(1_{\Omega_{i}}-1_{\Omega_{j}})(x)}.
\end{flalign*}
Equation \eqref{six1zn2} is proven analogously.  A priori finiteness follows from Remark \ref{drk}.
\end{proof}


Using the dilation as an eigenfunction for hyperstable sets implies that those sets are dilation invariant.

\begin{lemma}[\embolden{Hyperstable Implies Dilation Invariance}]\label{lemma50}
Let $\Omega_{1},\ldots,\Omega_{m},\Omega_{1}',\ldots,\Omega_{m}'\subset\R^{\adimn}$ minimize Problem \ref{prob2n}.  Assume $\exists$ measurable $\Theta_{1},\ldots,\Theta_{m},$ \\$\Theta_{1}',\ldots,\Theta_{m}'\subset\R^{m-1}$ such that
$$\Omega_{i}=\Theta_{i}\times\R,\quad \Omega_{i}'=\Theta_{i}'\times\R\qquad\forall\,1\leq i\leq m.$$
Assume also that
\begin{flalign*}
&\sum_{1\leq i<j\leq m}\Big[\int_{\Sigma_{ij}'}\langle x,N_{ij}'(x)\rangle\frac{\d}{\d\rho}T_{\rho}(1_{\Omega_{i}}-1_{\Omega_{j}})(x)\gamma_{\adimn}(x) \,\d x\\
&\qquad\qquad\qquad\qquad\qquad\qquad\qquad+\int_{\Sigma_{ij}}\langle x,N_{ij}(x)\rangle\frac{\d}{\d\rho}T_{\rho}(1_{\Omega_{i}'}-1_{\Omega_{j}'})(x)\gamma_{\adimn}(x) \,\d x\Big]\leq0.
\end{flalign*}
Then $\Omega_{1},\ldots,\Omega_{m},\Omega_{1}',\ldots,\Omega_{m}'$ are dilation invariant.  That is,
$$t\Omega_{i}=\Omega_{i},\qquad t\Omega_{i}'=\Omega_{i}',\qquad \forall\,t>0\,\,\forall\,1\leq i\leq m.$$
\end{lemma}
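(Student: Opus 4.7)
The strategy is to imitate the proof of Theorem~\ref{symthm}, using the cylinder axis $x_{\adimn}$ to build a Hermite-based volume-preserving variation of ``dilation'' type and then to invoke the bilinear dilation almost-eigenfunction identity of Lemma~\ref{mceign}. Concretely, I would apply the vector field $X(x)=\chi(x)(x_{\adimn}^{2}-1)(x_{1},\dots,x_{\sdimn},0)$, with $\chi$ a smooth compactly supported cutoff, simultaneously to the $\Omega_{i}$ and $\Omega_{i}'$ families. Writing $g_{ij}(x')=\langle x',N_{ij}(x')\rangle$ on the $\sdimn$-dimensional reduced boundary $\widehat\Sigma_{ij}\subset\R^{\sdimn}$ (so that $\Sigma_{ij}=\widehat\Sigma_{ij}\times\R$) and similarly $g_{ij}'$ on $\widehat\Sigma_{ij}'$, the normal components on $\Sigma_{ij}$ and $\Sigma_{ij}'$ become $f_{ij}(x)=(x_{\adimn}^{2}-1)g_{ij}(x')$ and $f_{ij}'(x)=(x_{\adimn}^{2}-1)g_{ij}'(x')$; by the cylinder hypothesis $g_{ij},g_{ij}'$ are independent of $x_{\adimn}$. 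The Hermite identity $\int_{\R}(x_{\adimn}^{2}-1)\gamma_{1}\,\d x_{\adimn}=0$ forces this variation to preserve every $\gamma_{\adimn}(\Omega_{i})$ and $\gamma_{\adimn}(\Omega_{i}')$, so Lemma~\ref{lemma27} supplies an admissible extension.

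By the cylinder structure, each factor in~\eqref{four32pv2n} splits as a product between $\R^{\sdimn}$ and $\R$. Using the one-dimensional Hermite eigenrelation $T_{\rho}^{(1)}(y_{\adimn}^{2}-1)(x_{\adimn})=\rho^{2}(x_{\adimn}^{2}-1)$ together with $\int_{\R}(x_{\adimn}^{2}-1)^{2}\gamma_{1}\,\d x_{\adimn}=2$, each of the four terms collapses to twice an integral over the corresponding $\widehat\Sigma$. Let $\widehat S_{ij}, \widehat A_{ij}, \widehat D_{ij}$ denote the $\R^{\sdimn}$-analogues of $S_{ij}, \vnormf{\overline{\nabla}T_{\rho}(1_{\Omega_{i}}-1_{\Omega_{j}})}, \frac{\d}{\d\rho}T_{\rho}(1_{\Omega_{i}}-1_{\Omega_{j}})$ for the partition $\{\Theta_{i}\}$, and $\widehat S_{ij}',\widehat A_{ij}',\widehat D_{ij}'$ for the primed partition. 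Because the $\Theta_{i},\Theta_{i}'$ inherit the first-variation condition of Lemma~\ref{firstvarmaxnsn}, the identity~\eqref{six1zn} descends to $\rho^{2}\widehat S_{ij}(g)(x')+g_{ij}'(x')\widehat A_{ij}(x')=(1-\rho^{2})\rho\,\widehat D_{ij}(x')$ on $\widehat\Sigma_{ij}'$, with the analogous relation on $\widehat\Sigma_{ij}$. Multiplying by $g_{ij}'$ respectively $g_{ij}$, integrating against $\gamma_{\sdimn}$, and summing over $i<j$, the local $\widehat A$-pieces cancel against the corresponding quadratic reductions of the cross $\widehat S$-pieces, yielding
\[
\tfrac{\d^{2}}{\d s^{2}}\Big|_{s=0}\sum_{i=1}^{m}\int_{\R^{\adimn}}1_{\Omega_{i}^{(s)}}\,T_{\rho}1_{\Omega_{i}^{'(s)}}\,\gamma_{\adimn}\,\d x \;=\; 2(1-\rho^{2})\rho\cdot\mathrm{Hyp},
\]
where $\mathrm{Hyp}$ denotes the left-hand side of the hypothesized inequality. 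Since $(1-\rho^{2})\rho>0$, the hypothesis $\mathrm{Hyp}\le 0$ gives a nonpositive second variation, while minimality of $(\Omega_{i},\Omega_{i}')$ forces nonnegativity, so $\mathrm{Hyp}=0$ and $(g,g')$ lies in the null space of the second-variation quadratic form restricted to volume-preserving test functions.

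The final step, and the main obstacle, is to upgrade this null-direction statement to the pointwise vanishing $g_{ij}\equiv 0,g_{ij}'\equiv 0$, which is equivalent to the $\Theta_{i},\Theta_{i}'$ being cones from the origin, hence to dilation invariance of $\Omega_{i}=\Theta_{i}\times\R$ and $\Omega_{i}'=\Theta_{i}'\times\R$. Following the sign-replacement strategy of Theorem~\ref{symthm}, I would substitute $(x_{\adimn}^{2}-1)|g|$ and $(x_{\adimn}^{2}-1)|g'|$: the local $\widehat A$-terms are unchanged since $|g|^{2}=g^{2}$, whereas the cross $\widehat S$-terms are compared using pointwise positivity of the Gaussian heat-kernel factor in the definition of $\widehat S_{ij}$, together with the relation $g_{ji}=-g_{ij}$ which converts the diagonal $\widehat\Sigma_{ij}$-contribution to $\widehat S_{ij}$ into a genuinely unsigned positive-kernel convolution. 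For $m=2$ this comparison, combined with the null-space information produced by $\mathrm{Hyp}=0$, isolates a volume-preserving variation whose second variation is strictly negative, contradicting minimality and forcing $g\equiv g'\equiv 0$. For $m\ge 3$ the signed structure of $\widehat S_{ij}$ over the off-diagonal pieces $\widehat\Sigma_{ik},\widehat\Sigma_{jk}$ with $k\ne i,j$ prevents a direct application of the $|g|$-trick, and in contrast to Theorem~\ref{symthm} the $|g|$-substitution here compares a null direction of a positive-semidefinite form (rather than a form bounded above), so the ``correct-direction'' inequality is already consistent with minimality. Extracting strict improvement therefore requires a careful sign accounting that isolates the diagonal contribution, combined with perturbations exploiting the explicit pointwise form of the dilation almost-eigenfunction identity; this is the main technical difficulty of the proof.
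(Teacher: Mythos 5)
Your choice of Hermite factor is where the argument goes wrong. You multiply the dilation direction by the degree-two Hermite polynomial $(x_{\adimn}^{2}-1)$, which under $T_{\rho}$ produces the eigenvalue $\rho^{2}$. That $\rho^{2}$ exactly cancels the $1/\rho^{2}$ appearing in the dilation almost-eigenfunction identity \eqref{six1zn}, so that the local quadratic pieces $\int\vnormf{\overline{\nabla}T_{\rho}(1_{\Omega_{i}}-1_{\Omega_{j}})}(f_{ij}')^{2}\gamma$ cancel entirely against the cross $S_{ij}$-pieces, and as you compute, the whole second variation collapses to $2\rho(1-\rho^{2})\cdot\mathrm{Hyp}$. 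Minimality plus the hypothesis then yields only $\mathrm{Hyp}=0$, which carries no pointwise information about $\langle x,N_{ij}\rangle$. You correctly identify this as the obstacle, but then reach for the $|g|$-substitution of Theorem~\ref{symthm}, and you also correctly observe that for $m\ge 3$ the signed off-diagonal terms in $S_{ij}$ block that trick. There is no salvage along that route; the gap is genuine.

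The paper sidesteps the whole problem by using the degree-\emph{one} Hermite polynomial instead: the vector field is $X(x)=x_{\adimn}\cdot x$, so the normal component is $f_{ij}(x)=x_{\adimn}\,\langle x,N_{ij}(x)\rangle$ and the $T_{\rho}$-eigenvalue is $\rho$, not $\rho^{2}$. Then $S_{ij}(\langle X,N\rangle)=-\tfrac{1}{\rho}f_{ij}'\,\vnormf{\overline{\nabla}T_{\rho}(1_{\Omega_{i}}-1_{\Omega_{j}})}+(1-\rho^{2})x_{\adimn}\tfrac{\d}{\d\rho}T_{\rho}(1_{\Omega_{i}}-1_{\Omega_{j}})$, and the cancellation is incomplete: after substituting into \eqref{four32pv2n} one gets
\begin{flalign*}
\frac{\d^{2}}{\d s^{2}}\Big|_{s=0}
&=\Big(1-\frac{1}{\rho}\Big)\sum_{1\leq i<j\leq m}\Big[\int_{\Sigma_{ij}'}\langle x,N_{ij}'(x)\rangle^{2}\vnormf{\overline{\nabla}T_{\rho}(1_{\Omega_{i}}-1_{\Omega_{j}})}\gamma_{\adimn}\,\d x\\
&\qquad\qquad\qquad\qquad\qquad+\int_{\Sigma_{ij}}\langle x,N_{ij}(x)\rangle^{2}\vnormf{\overline{\nabla}T_{\rho}(1_{\Omega_{i}'}-1_{\Omega_{j}'})}\gamma_{\adimn}\,\d x\Big]
+(1-\rho^{2})\,\mathrm{Hyp}.
\end{flalign*}
Both summands are nonpositive (the first because $1-\tfrac{1}{\rho}<0$ multiplies a sum of squares, the second by hypothesis), yet the total is $\ge 0$ by minimality. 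Hence each vanishes; the vanishing of the first, combined with the positivity of $\vnormf{\overline{\nabla}T_{\rho}(1_{\Omega_{i}}-1_{\Omega_{j}})}$ off a lower-dimensional set from Lemma~\ref{lemma7rn}, forces $\langle x,N_{ij}(x)\rangle=\langle x,N_{ij}'(x)\rangle=0$ pointwise, which is exactly dilation invariance. No sign trick and no $|g|$-substitution is needed. The moral is that the ``correct'' analogue of the Colding--Minicozzi dilation direction here is the degree-one Hermite weight, precisely because its eigenvalue $\rho$ leaves a strictly negative residual coefficient on the square term rather than annihilating it.
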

\begin{proof}
Consider the vector field $X\colon\R^{\adimn}\to\R$ defined by
$$X(x)\colonequals x_{\adimn}\cdot x,\qquad\forall\,x\in\R^{\adimn}.$$
Note that $\langle X,N_{ij}(x)\rangle$ and $\langle X,N_{ij}'(x)\rangle$ is constant as $x_{\adimn}$ varies, and $x_{\adimn}$ is constant as $x_{1},\ldots,x_{\sdimn}$ varies.  We can therefore apply Fubini's Theorem when we apply $S_{ij}$ and $S_{ij}'$ to the variation corresponding to $X$, and also note that $\int_{\redb\Omega_{i}}\langle X(x),N(x)\rangle\gamma_{\adimn}(x)\,\d x=0$ and $\int_{\redb\Omega_{i}'}\langle X(x),N'(x)\rangle\gamma_{\adimn}(x)\,\d x=0$ for all $1\leq i\leq m$ since $\int_{\R}x_{\adimn}\gamma_{1}(x_{\adimn})\,\d x_{\adimn}=0$.  From \eqref{sdef2n}, and using the well-known property of Hermite polynomials on the real line that $T_{\rho}x_{1}=\rho x_{1}$ for all $x_{1}\in\R$, we have for all $1\leq i,j\leq m$,
\begin{flalign*}
S_{ij}(\langle X,N\rangle)(x)&=\rho x_{\adimn}S_{ij}(\langle \cdot, N(\cdot)\rangle)(x),\qquad\forall\,x\in\Sigma_{ij}'.\\
S_{ij}'(\langle X,N'\rangle)(x)&=\rho x_{\adimn}S_{ij}(\langle \cdot, N'(\cdot)\rangle)(x),\qquad\forall\,x\in\Sigma_{ij}.
\end{flalign*}
Combining this with Lemma \ref{mceign}, for all $1\leq i,j\leq m$,
\begin{equation}\label{seven1n}
\begin{aligned}
S_{ij}(\langle X,N\rangle)(x)
&=-\frac{1}{\rho}\langle X,N'\rangle\vnormf{\overline{\nabla}T_{\rho}(1_{\Omega_{i}}-1_{\Omega_{j}})(x)}
+(1-\rho^{2})\frac{\d}{\d\rho}T_{\rho}(1_{\Omega_{i}}-1_{\Omega_{j}})(x),\,\forall\,x\in\Sigma_{ij}'.\\
S_{ij}'(\langle X,N'\rangle)(x)
&=-\frac{1}{\rho}\langle X,N\rangle\vnormf{\overline{\nabla}T_{\rho}(1_{\Omega_{i}'}-1_{\Omega_{j}'})(x)}
+(1-\rho^{2})\frac{\d}{\d\rho}T_{\rho}(1_{\Omega_{i}'}-1_{\Omega_{j}'})(x),\,\forall\,x\in\Sigma_{ij}.
\end{aligned}
\end{equation}%
Combining these facts and Lemma \ref{lemma7rn}, and using also $\int_{\R}x_{\adimn}^{2}\gamma_{1}(x_{\adimn})\,\d x_{\adimn}=1$ and that $\vnormf{\overline{\nabla}T_{\rho}1_{\Omega}(x)}$ is constant as $x_{\adimn}$ varies, the variation of $\{\Omega_{i}^{(s)}\}_{s\in(-1,1)},\{\Omega_{i}^{'(s)}\}_{s\in(-1,1)}$ corresponding to this choice of $X$ satisfies
\begin{equation}\label{seven2n}
\begin{aligned}
&\frac{\d^{2}}{\d s^{2}}\Big|_{s=0}\sum_{i=1}^{m}\int_{\R^{\adimn}} \int_{\R^{\adimn}} 1_{\Omega_{i}^{(s)}}(y)G(x,y) 1_{\Omega_{i}^{'(s)}}(x)\,\d x\d y\\
&\qquad=\sum_{1\leq i<j\leq m}\int_{\Sigma_{ij}'} \Big(S_{ij}(\langle X,N\rangle)(x)f_{ij}'(x)+\vnormf{\overline{\nabla} T_{\rho}(1_{\Omega_{i}}-1_{\Omega_{j}})(x)}(f_{ij}'(x))^{2} \Big) \,\d x\\
&\qquad\qquad+\sum_{1\leq i<j\leq m}\int_{\Sigma_{ij}}\Big(S_{ij}'(\langle X,N'\rangle)(x) f_{ij}(x)+\vnormf{\overline{\nabla} T_{\rho}(1_{\Omega_{i}'}-1_{\Omega_{j}'})(x)}(f_{ij}(x))^{2}\Big) \,\d x\\
&\qquad\stackrel{\eqref{seven1n}}{=}
\Big(-\frac{1}{\rho}+1\Big)\sum_{1\leq i<j\leq m}\Big[\int_{\Sigma_{ij}'} \langle x,N_{ij}'(x)\rangle^{2}\vnormf{\overline{\nabla} T_{\rho}(1_{\Omega_{i}}-1_{\Omega_{j}})(x)}\gamma_{\adimn}(x)\,\d x\\
&\qquad\qquad\qquad\qquad\qquad\qquad\qquad
+\int_{\Sigma_{ij}} \langle x,N_{ij}(x)\rangle^{2}\vnormf{\overline{\nabla} T_{\rho}(1_{\Omega_{i}'}-1_{\Omega_{j}'})(x)}\gamma_{\adimn}(x)\,\d x\Big]\\
&\qquad+(1-\rho^{2})\sum_{1\leq i<j\leq m}\Big[\int_{\Sigma_{ij}'}\langle x,N_{ij}'(x)\rangle\frac{\d}{\d\rho}T_{\rho}(1_{\Omega_{i}}-1_{\Omega_{j}})(x)\gamma_{\adimn}(x) \,\d x\\
&\qquad\qquad\qquad\qquad\qquad\qquad\qquad+\int_{\Sigma_{ij}}\langle x,N_{ij}(x)\rangle\frac{\d}{\d\rho}T_{\rho}(1_{\Omega_{i}'}-1_{\Omega_{j}'})(x)\gamma_{\adimn}(x) \,\d x\Big].
\end{aligned}
\end{equation}
By assumption, the last quantity is nonpositive.  By the minimization property of the sets $\Omega_{1},\ldots,\Omega_{m},\Omega_{1}',\ldots,\Omega_{m}'$, we have
$$\frac{\d^{2}}{\d s^{2}}\Big|_{s=0}\sum_{i=1}^{m}\int_{\R^{\adimn}} \int_{\R^{\adimn}} 1_{\Omega_{i}^{(s)}}(y)G(x,y) 1_{\Omega_{i}^{'(s)}}(x)\,\d x\d y\geq0.$$
In order for this inequality to be true, since $0<\rho<1$, $1-(1/\rho)<0$, we must have
$$\langle x,N_{ij}'(x)\rangle=0,\qquad\forall\,x\in\Sigma_{ij}',\quad\forall\,1\leq i<j\leq m.$$
$$\langle x,N_{ij}(x)\rangle=0,\qquad\forall\,x\in\Sigma_{ij},\quad\forall\,1\leq i<j\leq m.$$
That is, the sets $\Omega_{1},\ldots,\Omega_{m},\Omega_{1}',\ldots,\Omega_{m}'$ are dilation invariant.
\end{proof}

\begin{lemma}[\embolden{Dilation Invariance Implies Simplicial}]\label{lemma51}
Let $m\leq 4$.  Assume that $\Omega_{1},\ldots,\Omega_{m},\Omega_{1}',\ldots,\Omega_{m}'\subset\R^{m}$ are dilation invariant partitions and hyperstable, so that
$$t\Omega_{i}=\Omega_{i},\qquad t\Omega_{i}'=\Omega_{i}',\qquad \forall\,t>0\,\,\forall\,1\leq i\leq m.$$
Then $\Omega_{1},\ldots,\Omega_{m},\Omega_{1}',\ldots,\Omega_{m}'$ are congruent, regular simplicial cones.
\end{lemma}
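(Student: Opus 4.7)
The plan is to combine the first-variation condition with dilation invariance to force a max-inner-product (Voronoi-type) structure on the two partitions, after which a low-dimensional classification finishes the argument. First, the equal-measure hypothesis $\gamma_{\adimn}(\Omega_i)=1/m$ (built into hyperstability via Definition \ref{hyppart}) combined with continuity of $T_\rho$ and the fact that each $\Sigma_{ij}'$ is a cone through the origin forces the first-variation constants from Lemma \ref{firstvarmaxnsn} to vanish: $c_{ij}'=T_\rho(1_{\Omega_i}-1_{\Omega_j})(0)=1/m-1/m=0$, and symmetrically $c_{ij}=0$. Hence $T_\rho(1_{\Omega_i}-1_{\Omega_j})\equiv 0$ on $\Sigma_{ij}'$ and $T_\rho(1_{\Omega_i'}-1_{\Omega_j'})\equiv 0$ on $\Sigma_{ij}$.

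Second, I will extract the hyperplane structure of the boundaries. For any $z\in\Sigma_{ij}'$, dilation invariance gives $T_\rho(1_{\Omega_i}-1_{\Omega_j})(rz)=0$ for every $r>0$. Differentiating at $r=0^+$, and using a change of variables based on the dilation invariance of $\Omega_i$, one computes
$$\overline{\nabla}T_\rho 1_{\Omega_i}(0)=\frac{\rho}{\sqrt{1-\rho^2}}\, b_i,\qquad b_i\colonequals\int_{\Omega_i} x\,\gamma_{\adimn}(x)\,\d x,$$
so that $\langle z, b_i-b_j\rangle=0$. Hence $\Sigma_{ij}'\subset\{x:\langle x, b_i-b_j\rangle=0\}$, and symmetrically $\Sigma_{ij}\subset\{x:\langle x, b_i'-b_j'\rangle=0\}$. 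The arrangement of $\binom{m}{2}$ hyperplanes through the origin $\{\langle x, b_i-b_j\rangle=0\}_{i<j}$ has exactly $m$ ``maximum chambers'' of the form $\{x:\langle x, b_i\rangle=\max_j\langle x, b_j\rangle\}$, and since the $\Omega_i'$ are $m$ cells refining this arrangement, each must coincide with one such chamber. After relabeling, $\Omega_i'=\{x:\langle x, b_i\rangle=\max_j\langle x, b_j\rangle\}$, and symmetrically $\Omega_i=\{x:\langle x, b_i'\rangle=\max_j\langle x, b_j'\rangle\}$ (up to a sign choice consistent with minimization of the bilinear functional).

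Third, the equal-measure condition, together with minimality and the self-consistency equations (each $b_i$ is the Gaussian barycenter of a max-inner-product cell determined by the complementary barycenter system), forces $(b_i)$ and $(b_i')$ to be vertices of regular simplices centered at the origin. For $m=3$, after the dimension reduction of Theorem \ref{mainthm1n}, three equal-measure sectors in $\R^2$ must each subtend angle $2\pi/3$, so the $b_i$'s are equispaced on a circle. For $m=4$, after reducing to $\R^3$, one must distinguish the regular tetrahedron from the reducible ``four $90^\circ$ sector'' configuration in $\R^2\times\R$; minimality of the bilinear noise stability, combined with the hyperstability identity from Lemma \ref{mceign}, rules out the latter. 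Consequently both partitions are regular simplicial cones, and hence congruent up to rigid motion.

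The main obstacle is this case-exclusion for $m=4$. Prior work \cite{heilman14} settled the analogous classification via computer-assisted computation of noise stabilities; the plan here is to replace that with an analytical argument exploiting minimality together with the additional ``dilation eigenfunction'' information from Lemma \ref{mceign}. For $m\geq 5$, the proliferation of candidate equal-measure max-inner-product configurations (the regular $(m-1)$-simplex in $\R^{m-1}$, cylindrical products with lower-dimensional regular simplices, and others) defeats such a classification, which is precisely the obstruction noted in Remark \ref{m5rk}.
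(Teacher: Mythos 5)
Your proposal takes a genuinely different route from the paper, and the key step is not justified. The paper's argument for $m=3,4$ exploits the \emph{full} vanishing $T_\rho(1_{\Omega_i'}-1_{\Omega_j'})\equiv 0$ along an entire half-line $L'\subset\Sigma_{ij}$, not just its derivative at the origin, to conclude that $\Omega_i'$ is the \emph{reflection} of $\Omega_j'$ across $L$. That reflection observation is the engine of the paper's proof: the collection of such reflections forms a group $A_{ij}'$, and the trichotomy (empty, exactly two elements, more than two elements) is what pins down the simplicial-cone structure (or forces a barycenter condition $\int_{\Omega_i'}x\,\gamma\,\d x=0$, which via Lemma \ref{keylemn} triggers a further dimension reduction). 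Your argument extracts only the derivative-at-origin consequence $\langle z, b_i-b_j\rangle=0$ for $z\in\Sigma_{ij}'$, yielding that each interface sits inside a hyperplane $\{x\colon\langle x,b_i-b_j\rangle=0\}$. That is a strictly weaker necessary condition, and it does not deliver what you then assert.

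Concretely, the jump from ``$\Sigma_{ij}'$ lies in the hyperplane $\{\langle x, b_i-b_j\rangle=0\}$'' to ``each $\Omega_i'$ coincides with a max-inner-product chamber'' is unjustified. The arrangement of $\binom{m}{2}$ hyperplanes through the origin generically has more than $m$ chambers, and nothing you have said prevents a single cell $\Omega_i'$ from being a non-contiguous union of several chambers (this is exactly what the paper's ``pair of parallel half lines'' sub-case for $A_{ij}'$ addresses), nor does it handle degeneracies where some $b_i=b_j$ (in which case the ``hyperplane'' is all of $\R^n$ and your argument provides no information). You also cannot directly assume that all measures equal $1/m$: in the bilinear Problem \ref{prob2n}, the constraint is only $\gamma(\Omega_i)=\gamma(\Omega_i')$, and the paper derives $\gamma=1/m$ (or else emptiness of some cells, prompting a reduction to smaller $m$) by examining the limit of $T_\rho(1_{\Omega_i'}-1_{\Omega_j'})$ along $L'$ as $\vnorm{x}\to\infty$ and comparing with its value at the origin. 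Finally, the $m=4$ case-exclusion, which you flag as the main obstacle and then dispatch with a one-line appeal to ``minimality combined with Lemma \ref{mceign},'' is precisely where the paper instead repeats the reflection/group argument in $\R^3$ (three sets meeting at $120$ degrees along common edges) to force the regular tetrahedral cone; your sketch does not supply that mechanism.

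Your barycenter computation $\overline{\nabla}T_\rho 1_{\Omega_i}(0)=\frac{\rho}{\sqrt{1-\rho^2}}\,b_i$ for a dilation-invariant cone is correct and is a nice observation, but by itself it is not enough. To turn your approach into a proof, you would need to replace the derivative-at-origin step by an analysis of the full one-parameter family $r\mapsto T_\rho(1_{\Omega_i'}-1_{\Omega_j'})(rz)$ for sectors/cones, which is exactly what leads to the reflection symmetry in the paper; once that is in hand, the group-theoretic case analysis closes both the ``union of chambers'' loophole and the $m=4$ classification.
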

\begin{proof}
\textbf{Case of Dimension One}.  If it occurs that $\Omega_{1},\ldots,\Omega_{m},\Omega_{1}',\ldots,\Omega_{m}'\subset\R$, then dilation invariance implies that two of the sets $\Omega_{1},\ldots,\Omega_{m}$ are opposing half lines and the rest of these sets are empty.  Similarly, two of the sets $\Omega_{1}',\ldots,\Omega_{m}'$ are opposing half lines and the rest are empty.  Up to relabeling the sets, there are only two possibly configurations of the partitions, namely
$$\Omega_{1}=(-\infty,0],\quad \Omega_{2}=(0,\infty),\qquad\Omega_{1}'=(-\infty,0],\quad \Omega_{2}'=(0,\infty),\quad\mbox{or}$$
$$\Omega_{1}=(-\infty,0],\quad \Omega_{2}=(0,\infty),\qquad\Omega_{1}'=(0,\infty),\quad \Omega_{2}'=(-\infty,0].\qquad$$
The latter case has smaller noise stability than the former case, so the latter case completes the proof when the sets are all contained in $\R$.

\textbf{Case of $m=2$ sets}.  By Theorem \ref{mainthm1n}, we may assume that $\Omega_{1},\ldots,\Omega_{m},\Omega_{1}',\ldots,\Omega_{m}'\subset\R$, so the proof is completed by the previous case.

\textbf{Case of $m=3$ Sets}.  By Theorem \ref{mainthm1n}, we may assume that $\Omega_{1},\ldots,\Omega_{m},\Omega_{1}',\ldots,\Omega_{m}'\subset\R^{2}$.  Dilation Invariance of these sets implies that each set is a finite union of sectors, and for any $1\leq i<j\leq m$, $(\partial\Omega_{i})\cap(\partial\Omega_{j})$ is a union of half-lines emanating from the origin (and similarly for $(\partial\Omega_{i}')\cap(\partial\Omega_{j}')$.)  For some fixed $1\leq i<j\leq m$, let $L'$ be a half line in $(\partial\Omega_{i})\cap(\partial\Omega_{j})$ and consider the function $T_{\rho}(1_{\Omega_{i}'}-1_{\Omega_{j}'})(x)$ where $x$ varies in the line $L$ containing $L'$.  The First Variation condition in Lemma \ref{firstvarmaxnsn} implies that $T_{\rho}(1_{\Omega_{i}'}-1_{\Omega_{j}'})(x)$ is constant on the half line $L'$.  In particular, we have by dilation invariance and \eqref{oudef},
\begin{equation}\label{five1}
\gamma_{2}(\Omega_{i}')-\gamma_{2}(\Omega_{j}')=T_{\rho}(1_{\Omega_{i}'}-1_{\Omega_{j}'})(0).
\end{equation}
Also, as $x\in L'$ tends to infinity, since $\Omega_{i}'$ and $\Omega_{j}'$ are finite unions of sectors, we have
$$\lim_{x\in L',\, \vnorm{x}\to\infty}T_{\rho}(1_{\Omega_{i}'}-1_{\Omega_{j}'})(x)\in\{0,1\}.$$
If this limit is $1$, then \eqref{five1} is violated.  We conclude that
\begin{equation}\label{five2}
\lim_{x\in L',\, \vnorm{x}\to\infty}T_{\rho}(1_{\Omega_{i}'}-1_{\Omega_{j}'})(x)=0.
\end{equation}
Then \eqref{five1} is also equal to zero since $T_{\rho}(1_{\Omega_{i}'}-1_{\Omega_{j}'})(x)$ is constant for all $x\in L'$.  Consequently, either $\gamma_{2}(\Omega_{i})=\gamma_{2}(\Omega_{i}')=1/3$ for all $1\leq i\leq 3$, or some $\Omega_{i}$ and $\Omega_{j}'$ are empty, in which case we return to the previous $m=2$ case.

So, for the remainder of the proof, we may assume that
$$\gamma_{2}(\Omega_{i})=\gamma_{2}(\Omega_{i}')=1/3,\qquad\forall\,1\leq i\leq 3.$$

Since $\Omega_{i}'$ and $\Omega_{j}'$ are finite unions of sectors, in order to have $T_{\rho}(1_{\Omega_{i}'}-1_{\Omega_{j}'})(x)=0$ for all $x$ in the half line $L'$, it must be the case that $\Omega_{i}'$ is the reflection of $\Omega_{j}'$ across $L'$.  Therefore, with $1\leq i<j\leq m$ fixed, let $A_{ij}'$ denote the group of all such reflections that interchange $\Omega_{i}'$ and $\Omega_{j}'$.  (If $A_{ij}'$ is empty, then $\Omega_{i}\cap\Omega_{j}=\emptyset$.)  If $A_{ij}'$ is nonempty with more than two elements, then the composition of two distinct reflections is a rotation of $\Omega_{i}'$ that fixes $\Omega_{i}'$.  Since $\Omega_{i}'$ is invariant under a non-identity rotation, we conclude that $\int_{\Omega_{i}'}x\gamma_{2}(x)\,\d x=0$.  Lemma \ref{keylemn} then implies that we can  reduce back to the dimension one case.

Having ruled out the cases that $A_{ij}'$ is empty or it has more than two elements, we are left with the case that $A_{ij}'$ has exactly two elements (i.e. the identity and one reflection).  That is, we assume for all $1\leq i<j\leq 3$, $\Omega_{i}'\cap\Omega_{j}'$ is either a single half line, or a pair of parallel half lines.  The latter case implies that $\int_{\Omega_{i}'}x\gamma_{2}(x)\,\d x=0$.  The former case implies that $\Omega_{1}',\ldots,\Omega_{3}'$ are three $120$ degree sectors.

The case $m=3$ is therefore concluded.

\textbf{Case of $m=4$ Sets}.  By Theorem \ref{mainthm1n}, we may assume that $\Omega_{1},\ldots,\Omega_{m},\Omega_{1}',\ldots,\Omega_{m}'\subset\R^{3}$.  As in the $m=3$ case, we may deduce that
$$\gamma_{3}(\Omega_{i})=\gamma_{3}(\Omega_{i}')=1/4,\qquad\forall\,1\leq i\leq 4.$$
As in the $m=3$ case, in order to have $T_{\rho}(1_{\Omega_{i}'}-1_{\Omega_{j}'})(x)=0$ for all $x$ in the boundary between $\Omega_{i}'$ and $\Omega_{j}'$, it must be the case that $\Omega_{i}'$ is the reflection of $\Omega_{j}'$ across a plane containing a part of this boundary.

For any $1\leq i<j\leq m$ fixed, let $A_{ij}'$ denote the group of all such reflections that interchange $\Omega_{i}'$ and $\Omega_{j}'$.  (If $A_{ij}'$ is empty, then $\Omega_{i}\cap\Omega_{j}=\emptyset$.)  If $A_{ij}'$ is nonempty with more than two elements, then the composition of two distinct reflections is a rotation of $\Omega_{i}'$ that fixes $\Omega_{i}'$.  Since $\Omega_{i}'$ is invariant under a non-identity rotation, we conclude that $\int_{\Omega_{i}'}x\gamma_{2}(x)\,\d x=0$.  Lemma \ref{keylemn} then implies that we can reduce to the dimension two case.  The dimension two case then proceeds as in the $m=3$ case.

Having ruled out the cases that $A_{ij}'$ is empty or it has more than two elements, we are left with the case that $A_{ij}'$ has exactly two elements (i.e. the identity and one reflection).  That is, we assume for all $1\leq i<j\leq 4$, $\Omega_{i}'\cap\Omega_{j}'$ is a disc sector.  We then conclude that $\Omega_{1}',\ldots,\Omega_{4}'$ are congruent regular tetrahedral cones.  (Since $T_{\rho}(1_{\Omega_{i}'}-1_{\Omega_{j}'})(x)=0$ for all $x$ in the boundary between $\Omega_{i}'$ and $\Omega_{j}'$, if we particularly choose a line that is the intersection of three of the sets, then it must be the case that the three sets meet at $120$ degree angles.)

\end{proof}
\begin{remark}\label{m5rk}
When $m\geq5$, unlike in the case $m\leq 4$, there seems to be no simple a priori reason that the interfaces between the sets $\Omega_{1},\ldots,\Omega_{m},\Omega_{1}',\ldots,\Omega_{m}'\subset\R^{\adimn}$ are flat.  We expect this should be true, but we cannot presently prove it.
\end{remark}

Lemmas \ref{lemma50} and \ref{lemma51} together imply the following, which is a restatement of Theorem \ref{main1}.

\begin{theorem}\label{main16}
Let $m\leq 4$.  Suppose $\Omega_{1},\ldots,\Omega_{m},\Omega_{1}',\ldots,\Omega_{m}'\subset\R^{\adimn}$ minimize Problem \ref{prob2n} and these partitions are hyperstable.  Then $\Omega_{1},\ldots,\Omega_{m},\Omega_{1}',\ldots,\Omega_{m}'$ are congruent, regular simplicial cones.
\end{theorem}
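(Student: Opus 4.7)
The plan is to combine Lemmas \ref{lemma50} and \ref{lemma51}, after a preliminary dimension reduction via Theorem \ref{mainthm1n}. First I would apply Theorem \ref{mainthm1n} (whose hypothesis $m\leq\sdimn+2$ is available since $m\leq 4$): after rotation and a Lebesgue null modification, one obtains $\Theta_i,\Theta_i'\subset\R^{m-1}$ with $\Omega_i=\Theta_i\times\R^{\sdimn-m+2}$ and $\Omega_i'=\Theta_i'\times\R^{\sdimn-m+2}$. Grouping all but one of the trailing $\R$-factors into $\Theta$ rewrites each $\Omega_i$ as a cylinder $\widetilde\Theta_i\times\R$ of the form required by Lemma \ref{lemma50} (the Hermite/Fubini argument inside that lemma being insensitive to the presence of extra trivial Euclidean factors).

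Next I would verify that hyperstability yields the integral-sign hypothesis of Lemma \ref{lemma50}. Starting from the bilinear analogue of the mixed-vanishing condition $\frac{\d^{2}}{\d s\,\d\rho}\big|_{s=\rho=0}=0$ in Definition \ref{hyppart}, evaluated along a volume-preserving variation that is critical at $\rho$, I would differentiate the first-variation formula of Lemma \ref{firstvarmaxnsn} in $\rho$ and invoke the heat-equation identity \eqref{oup} to expand the mixed derivative into precisely the boundary integrals appearing in the hypothesis of Lemma \ref{lemma50}. The hyperstability vanishing then forces these integrals to sum to zero, which is strictly stronger than the required $\leq 0$ bound, so Lemma \ref{lemma50} applies and $\Omega_i,\Omega_i'$ are dilation invariant.

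With dilation invariance in hand, Lemma \ref{lemma51} concludes the proof: its case analysis for $m\leq 4$, built on the first-variation constancy of $T_{\rho}(1_{\Omega_{i}'}-1_{\Omega_{j}'})$ along each boundary half-line together with a reflection-group argument that eliminates nontrivial stabilizers of the components, shows that $\Omega_{1},\ldots,\Omega_{m},\Omega_{1}',\ldots,\Omega_{m}'$ must be congruent, regular simplicial cones.

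The main obstacle I anticipate is the second step: carefully tracking signs between the positive-correlation case (where \eqref{nabeq3} carries a minus sign) and the bilinear negative-correlation case (where \eqref{nabeq3n} carries a plus sign, reversing the orientation of the normal vectors between the primed and unprimed partitions), and confirming that the dilation field $X(x)=x_{\adimn}\cdot x$ used inside the proof of Lemma \ref{lemma50} really does correspond to a volume-preserving, $\rho$-critical variation to which the hyperstability assumption legitimately applies. Once this bookkeeping is in place, chaining the two lemmas yields the theorem immediately.
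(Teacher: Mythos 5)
Your proposal takes the same route as the paper: Theorem \ref{main16} is stated there as an immediate corollary of Lemmas \ref{lemma50} and \ref{lemma51}, with Theorem \ref{mainthm1n} supplying the cylindrical structure $\Omega_i=\Theta_i\times\R$ that Lemma \ref{lemma50} requires. The subtlety you flag at the end — reconciling the volume-preserving variations in the hyperstability definition with the dilation-type integral $\int\langle x,N\rangle\,\frac{\d}{\d\rho}T_\rho(\cdots)\,\gamma\,\d x$ in Lemma \ref{lemma50}'s hypothesis (the pure dilation field $X(x)=x$ producing that integral is not volume-preserving, while the volume-preserving $X(x)=x_{\adimn}\cdot x$ annihilates the corresponding mixed $\frac{\d^2}{\d s\,\d\rho}$ by Fubini) — is indeed the delicate point; the paper leaves precisely that step implicit, so your proposal is a faithful and somewhat more explicit reconstruction of the intended argument.
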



%
%

\section{One Set, Negative Correlation}

Here we single out the case that $m=2$ for negative correlation, since this case can be solved exactly.  This case follows from Borell's inequality, but a variational proof of this inequality has not been given before.  In the positive correlation case, this proof appeared already in \cite{heilman20d,heilman21}, but some small changes are needed to solve the negative correlation case.  In particular, we only impose the constraint that both sets have the same measure, without further constraining what that measure could be.  Such a constraint is most natural for applications to hardness of the MAX-CUT problem \cite{khot07,isaksson11}.

Problem \ref{prob2n} in the case $m=2$ can be restated in the following way.

\begin{prob}\label{onesetprob}
Let $\Omega,\Omega'\subset\R^{\adimn}$ be measurable sets minimizing
$$\int_{\R^{\adimn}}1_{\Omega}(x)T_{\rho}1_{\Omega'}(x)\gamma_{\adimn}(x)\,\d x
+\int_{\R^{\adimn}}1_{\Omega^{c}}(x)T_{\rho}1_{(\Omega')^{c}}(x)\gamma_{\adimn}(x)\,\d x$$
subject to the constraint that $\gamma_{\adimn}(\Omega)=\gamma_{\adimn}(\Omega')$.
\end{prob}

Theorem \ref{mainthm1n} says that we may assume that $\Omega,\Omega'\subset\R$ in order to solve Problem \ref{onesetprob}.

Since $\Omega_{1}=\Omega$ and $\Omega_{2}=\Omega^{c}$, Equation \ref{sdef2n} can be simplified to
\begin{equation}\label{sdef2noneset}
\begin{aligned}
S(\langle v,N\rangle)(x)
&\colonequals (1-\rho^{2})^{-(\adimn)/2}(2\pi)^{-(\adimn)/2}\int_{\partial\Omega}2\langle v,N(y)\rangle e^{-\frac{\vnorm{y-\rho x}^{2}}{2(1-\rho^{2})}}\,\d y,
\,\forall\,x\in\Sigma'\\
S'(\langle v,N'\rangle)(x)
&\colonequals (1-\rho^{2})^{-(\adimn)/2}(2\pi)^{-(\adimn)/2}\int_{\partial\Omega'}2\langle v,N'(y)\rangle e^{-\frac{\vnorm{y-\rho x}^{2}}{2(1-\rho^{2})}}\,\d y,
\,\forall\,x\in\Sigma.
\end{aligned}
\end{equation}

Then \eqref{gren} says
\begin{equation}\label{grenoneset}
\begin{aligned}
S(\langle v,N\rangle)(x)
&=-\langle v,N'(x)\rangle\frac{1}{\rho}\vnormf{2\overline{\nabla} T_{\rho}(1_{\Omega})(x)},\qquad\forall\,x\in\Sigma'.\\
S'(\langle v,N'\rangle)(x)
&=-\langle v,N(x)\rangle\frac{1}{\rho}\vnormf{2\overline{\nabla} T_{\rho}(1_{\Omega'})(x)},\qquad\forall\,x\in\Sigma.\\
\end{aligned}
\end{equation}

Plugging this into Lemma \ref{lemma7rn}, we get
\begin{equation}\label{four32oneset}
\begin{aligned}
&\frac{1}{2}\frac{\d^{2}}{\d s^{2}}\Big|_{s=0}\sum_{i=1}^{2}\int_{\R^{\adimn}} \int_{\R^{\adimn}} 1_{\Omega_{i}^{(s)}}(y)G(x,y) 1_{\Omega_{i}^{'(s)}}(x)\,\d x\d y\\
&\quad=\int_{\Sigma'} S(f)(x)\cdot f'(x)\gamma_{\adimn}(x) \,\d x
+\int_{\Sigma} S'(f')(x)\cdot f(x)\gamma_{\adimn}(x) \,\d x\\
&\qquad+\int_{\Sigma'}\vnormf{2\overline{\nabla} T_{\rho}(1_{\Omega})(x)}(f'(x))^{2} \gamma_{\adimn}(x)\,\d x
+\int_{\Sigma}\vnormf{2\overline{\nabla} T_{\rho}(1_{\Omega'})(x)}(f(x))^{2} \gamma_{\adimn}(x)\,\d x.
\end{aligned}
\end{equation}
%
%
%
%

\begin{definition}
A pair of sets $\Omega,\Omega'\subset\R^{\adimn}$ is called \textbf{locally stable} for noise stability for negative correlation if, for any family of pairs of sets $\{\Omega_{s}\}_{s\in(-1,1)},\{\Omega_{s}'\}_{s\in(-1,1)}$ with $\Omega_{0}=\Omega,\Omega_{0}'=\Omega'$ such that
$$\frac{\d}{\d s}\Big|_{s=0}\gamma_{\adimn}(\Omega_{s})=\frac{\d}{\d s}\Big|_{s=0}\gamma_{\adimn}(\Omega_{s}'),$$
we have
$$\frac{\d^{2}}{\d s^{2}}\Big|_{s=0}\int_{\R^{\adimn}}1_{\Omega^{(s)}}(x)T_{\rho}1_{\Omega^{(s)}}(x)\gamma_{\adimn}(x)\,\d x\leq0.$$

$$\frac{\d^{2}}{\d s^{2}}\Big|_{s=0}
\int_{\R^{\adimn}}1_{\Omega^{(s)}}(x)T_{\rho}1_{\Omega^{'(s)}}(x)\gamma_{\adimn}(x)\,\d x
+\int_{\R^{\adimn}}1_{[\Omega^{(s)}]^{c}}(x)T_{\rho}1_{[\Omega^{'(s)}]^{c}}(x)\gamma_{\adimn}(x)\,\d x\geq0.
$$
\end{definition}

We say two (open) half spaces are opposing if their intersection is either empty or the boundary of this intersection is exactly two parallel hyperplanes.

\begin{cor}\label{rk0}
Opposing half spaces are the only locally stable sets for noise stability for negative correlation.
\end{cor}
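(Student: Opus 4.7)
The plan is to combine the single-set characterization of half spaces via local stability from \cite{heilman21} with an explicit admissible variation that separates the ``same'' from the ``opposing'' configuration in the bilinear problem.

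First I would restrict the local-stability hypothesis to admissible families with $\Omega_{s}'\equiv\Omega'$ held fixed; admissibility then forces $\frac{\d}{\d s}|_{s=0}\gamma_{\adimn}(\Omega_{s})=0$, so $\Omega_{s}$ is a volume-preserving variation of $\Omega$ and the first inequality in the definition of local stability becomes exactly the volume-preserving local stability condition for the single set $\Omega$. By the robust half-space classification from \cite{heilman21} (quoted in the introduction), $\Omega$ must be a half space. The standard reparametrization trick $s\mapsto\sigma(s)$ with $\sigma'(0)=1$ and $\sigma''(0)$ free shows that local stability also implies first-order criticality $F'(0)=0$ for all admissible variations of the bilinear functional (otherwise $F''(0)+\sigma''(0)F'(0)$ could be made negative); applying this with $\Omega_{s}\equiv\Omega$ and $\Omega_{s}'$ ranging over volume-preserving variations of $\Omega'$ yields the Euler--Lagrange condition $T_{\rho}1_{\Omega}\equiv\mathrm{const}$ on $\partial\Omega'$ (cf.\ Lemma~\ref{firstvarmaxnsn}).

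Since for the half space $\Omega=\{x:\langle x,a\rangle\geq\alpha\}$ the function $T_{\rho}1_{\Omega}(x)=\Phi((\rho\langle x,a\rangle-\alpha)/\sqrt{1-\rho^{2}})$ is strictly monotone along $a$, the constancy condition forces $\partial\Omega'$ to lie in a single hyperplane orthogonal to $a$, so $\Omega'$ is itself a half space with normal parallel to that of $\Omega$. The measure constraint $\gamma_{\adimn}(\Omega')=\gamma_{\adimn}(\Omega)$ leaves only two cases: $\Omega'=\Omega$ or $\Omega'=\{x:\langle x,a\rangle\leq-\alpha\}$. Matching the two Lagrange-multiplier conditions $T_{\rho}1_{\Omega'}|_{\partial\Omega}=(1+\lambda)/2$ and $T_{\rho}1_{\Omega}|_{\partial\Omega'}=(1-\lambda)/2$, which together imply $c_{\Omega}+c_{\Omega'}=1$, then forces $\lambda=0$ and $\alpha=0$, so in either case both sets have Gaussian measure $1/2$; after a rotation, $\Omega=\{x_{\adimn}\geq 0\}$ and either $\Omega'=\Omega$ or $\Omega'=\{x_{\adimn}\leq 0\}$. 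To eliminate the coincidence case $\Omega'=\Omega$, I would test the bilinear second-order inequality against the admissible family $\Omega_{s}\equiv\Omega$, $\Omega_{s}'=\{x:x_{\adimn}\geq sf(x_{1},\ldots,x_{\sdimn})\}$ for a nonzero smooth $f$ with $\int_{\R^{\sdimn}}f\,\gamma_{\sdimn}=0$ (so the variation is admissible). Using $T_{\rho}1_{\Omega}(x)=\Phi(\rho x_{\adimn}/\sqrt{1-\rho^{2}})$ together with the identity $F(s)=2\int_{\Omega_{s}'}T_{\rho}1_{\Omega}\,\gamma_{\adimn}+\mathrm{const}$ (valid since $\gamma_{\adimn}(\Omega_{s}')=1/2$) and a fiberwise Taylor expansion gives
\[
\frac{\d^{2}}{\d s^{2}}\Big|_{s=0}F(s)=-\frac{2\rho\,\phi(0)^{2}}{\sqrt{1-\rho^{2}}}\int_{\R^{\sdimn}}f(y)^{2}\,\gamma_{\sdimn}(y)\,\d y<0,
\]
contradicting the bilinear second-order inequality $\geq 0$. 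Hence only the opposing configuration survives.

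The principal obstacle is the last computation: the strict negativity of the second variation is a quantitative analogue of the fact that $\Omega$ uniquely maximizes $\int 1_{\Omega'}T_{\rho}1_{\Omega}\,\gamma_{\adimn}$ among sets of Gaussian measure $1/2$, so any graph-type perturbation of $\Omega'=\Omega$ must strictly decrease this bilinear quantity. Extracting a clean closed-form expression relies on choosing the graph variation $x_{\adimn}\geq sf$ so that the vanishing of $\phi'(0)$ and of $\int f\,\gamma_{\sdimn}$ kill all lower-order boundary contributions, leaving only the single term $K''(0)\int f^{2}\,\gamma_{\sdimn}$ with a strictly signed weight $K''(0)=-\rho\phi(0)^{2}/\sqrt{1-\rho^{2}}$.
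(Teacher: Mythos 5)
Your proof is correct but takes a genuinely different route from the paper. The paper's proof first invokes the dimension reduction Theorem~\ref{mainthm1n} to reduce to one-dimensional cylinders, then tests local stability against the sign-flipped translation field $g(x)=-x_{\adimn}\absf{\langle v,N(x)\rangle}$, $g'(x)=x_{\adimn}\absf{\langle v,N(x)\rangle}$ and uses the almost-eigenfunction identity \eqref{grenoneset} to show the bilinear second variation is strictly negative whenever $\Omega$ is not a half space; it then dispenses with the remaining two configurations by a terse ``two-case comparison.'' You instead specialize the definition to the sub-family with $\Omega'_s\equiv\Omega'$ to reduce to the single-set stability problem and cite the classification from \cite{heilman21} outright, then use first-order criticality of $T_{\rho}1_{\Omega}$ on $\partial\Omega'$ (rather than any second-variation machinery) to show $\Omega'$ is a parallel half space, and finally rule out the coincident case $\Omega'=\Omega$ with an explicit fiberwise Taylor expansion of the graph perturbation. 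This buys you a self-contained elimination of the coincident case where the paper is terse, and avoids dimension reduction and the sign-flip argument entirely; the trade-off is reliance on the external single-set result. Two minor points worth tightening: (i) you invoke the reparametrization trick only for the bilinear inequality, but you also need it for the first (single-set) inequality to obtain the criticality hypothesis that the cited theorem from \cite{heilman21} requires, and the same argument does work there; (ii) the identity $F(s)=2\int_{\Omega_s'}T_{\rho}1_{\Omega}\,\gamma_{\adimn}+\mathrm{const}$ holds only up to $O(s^3)$, since $\gamma_{\adimn}(\Omega_s')$ is not exactly $1/2$ for $s\neq 0$; one should note that both $\frac{\d}{\d s}\big|_{0}\gamma_{\adimn}(\Omega_s')$ and $\frac{\d^{2}}{\d s^{2}}\big|_{0}\gamma_{\adimn}(\Omega_s')$ vanish because $\int f\,\gamma_{\sdimn}=0$ and $\phi'(0)=0$, which is enough for the second-derivative computation.
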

An analogous statement for positive correlation was shown in \cite{heilman21}.

\begin{proof}
Suppose without loss of generality that $\Omega\subset\R^{\adimn}$ is not a half space.  Then we have by definition \eqref{sdef} of $S$ the following strict inequality
\begin{equation}\label{dbst}
\int_{\redA'}S(-\abs{\langle v,N\rangle})(x)\cdot \abs{\langle v,N'(x)\rangle} \gamma_{\adimn}(x)\,\d x
<\int_{\redA'}S(\langle v,N\rangle)(x)\cdot\langle v,N(x)\rangle \gamma_{\adimn}(x)\,\d x.
\end{equation}
(If we do not multiply the right by $-1$, the left is positive and the right is negative, so it is necessary to multiply by $-1$ on the left to get the correct inequality.)  In the case that $\Omega=\Omega'\times\R$ (which we can assume by the Dimension Reduction Theorem \ref{mainthm1n}), consider the functions $g(x)\colonequals -x_{\adimn}\abs{\langle v,N(x)\rangle}$ defined on $\redA$ and $g'(x)\colonequals x_{\adimn}\abs{\langle v,N(x)\rangle}$ defined on $\redA'$.  Then
\begin{flalign*}
&\int_{\redA'}\Big(S(-g)(x)+\vnormf{2\overline{\nabla} T_{\rho}1_{\Omega}(x)} g'(x)\Big) g'(x)\gamma_{\adimn}(x)\,\d x\\
&=\int_{\redA'}\Big(\rho S(-\abs{\langle v,N\rangle})(x)\cdot \abs{\langle v,N'(x)\rangle} +\vnormf{2\overline{\nabla} T_{\rho}1_{\Omega}(x)} \abs{\langle v,N'(x)\rangle}\Big)
\abs{\langle v,N'(x)\rangle}\gamma_{\adimn}(x)\,\d x\\
&\stackrel{\eqref{dbst}}{<}\int_{\redA'}\Big(\rho S(\langle v,N\rangle)(x) +\vnormf{2\overline{\nabla} T_{\rho}1_{\Omega}(x)} \langle v,N'(x)\rangle\Big)
\langle v,N'(x)\rangle\gamma_{\adimn}(x)\,\d x
\stackrel{\eqref{grenoneset}}{=}0.
\end{flalign*}
Similarly, we have a (possibly strict) inequality for integrating on $\redA$:
\begin{flalign*}
&\int_{\redA}\Big(S'(g')(x)-\vnormf{2\overline{\nabla} T_{\rho}1_{\Omega}(x)} g(x)\Big) g(x)\gamma_{\adimn}(x)\,\d x\\
&=\int_{\redA'}\Big(\rho S'(-\abs{\langle v,N'\rangle})(x)\cdot \abs{\langle v,N(x)\rangle} +\vnormf{2\overline{\nabla} T_{\rho}1_{\Omega'}(x)} \abs{\langle v,N(x)\rangle}\Big)
\abs{\langle v,N(x)\rangle}\gamma_{\adimn}(x)\,\d x\\
&\geq\int_{\redA}\Big(\rho S'(\langle v,N'\rangle)(x) +\vnormf{2\overline{\nabla} T_{\rho}1_{\Omega'}(x)} \langle v,N(x)\rangle\Big)
\langle v,N(x)\rangle\gamma_{\adimn}(x)\,\d x
\stackrel{\eqref{grenoneset}}{=}0.
\end{flalign*}
So, $\int_{\redA} g(x)\gamma_{\adimn}(x)\,\d x=\int_{\redA'} g'(x)\gamma_{\adimn}(x)\,\d x=0$ while the corresponding variation of $g,g'$ satisfies
$$\frac{\d^{2}}{\d s^{2}}\Big|_{s=0}\sum_{i=1}^{2}\int_{\R^{\adimn}} \int_{\R^{\adimn}} 1_{\Omega_{i}^{(s)}}(y)G(x,y) 1_{\Omega_{i}^{'(s)}}(x)\,\d x\d y<0.$$
That is, the half space is the only stable maximum of noise stability.  More specifically, both $\Omega$ and $\Omega'$ are half spaces.  A two-case comparison shows that the noise stability is larger when one half space contains the other, and smaller when one half space does not contain the other.
\end{proof}

\bibliographystyle{amsalpha}
\def\polhk#1{\setbox0=\hbox{#1}{\ooalign{\hidewidth
  \lower1.5ex\hbox{`}\hidewidth\crcr\unhbox0}}} \def\cprime{$'$}
  \def\cprime{$'$}
\providecommand{\bysame}{\leavevmode\hbox to3em{\hrulefill}\thinspace}
\providecommand{\MR}{\relax\ifhmode\unskip\space\fi MR }
\providecommand{\MRhref}[2]{%
  \href{http://www.ams.org/mathscinet-getitem?mr=#1}{#2}
}
\providecommand{\href}[2]{#2}

\end{document}